\documentclass[12pt]{article}
\synctex=1
 \expandafter\let\csname equation*\endcsname\relax
 \expandafter\let\csname endequation*\endcsname\relax
\usepackage{bm}
\usepackage{enumerate}
\usepackage[top=1in, bottom=1in, left=1in, right=1in]{geometry}
\usepackage{mathrsfs,color}
\usepackage{amsfonts}\usepackage{multirow}
\usepackage{amssymb}
\usepackage{dsfont}
\usepackage{caption,comment}
\usepackage{blkarray}
\usepackage{amsmath}
\usepackage{float}
\usepackage{amssymb,amsthm}
\usepackage[toc,page]{appendix}
\usepackage{graphicx,afterpage}
\usepackage{epstopdf}
\usepackage{cancel}
\usepackage{mathdots}
\usepackage[pdftex,colorlinks=true]{hyperref}
\usepackage{algorithm}

\usepackage[T1]{fontenc}
\usepackage[utf8]{inputenc}
\usepackage{authblk}

\usepackage{todonotes}
\usepackage[noend]{algpseudocode}
\usepackage{mhequ}
\hypersetup{CJKbookmarks,%
bookmarksnumbered,%
colorlinks,%
linkcolor=black,%
citecolor=black,%
plainpages=false,%
pdfstartview=FitH}
\numberwithin{equation}{section}
\numberwithin{figure}{section}

\newcommand\tabcaption{\def\@captype{table}\caption}

\newtheorem{thm}{Theorem}[section]
\newtheorem{cor}[thm]{Corollary}
\newtheorem{lem}[thm]{Lemma}

\newtheorem{aspt}[thm]{Assumption}
\newtheorem{rem}[thm]{Remark}


\newcommand{\Riccati}{\mathcal{R}}

\newcommand*\Let[2]{\State #1 $\gets$ #2}

\newcommand{\Xhat}{\widehat{X}}

\newcommand{\Chat}{\widehat{C}}

\newcommand{\E}{\mathbb{E}}
\newcommand{\Prob}{\mathbb{P}}

\newcommand{\rmi}{\mathbf{i}}

\newcommand{\reals}{\mathbb{R}}

\newcommand{\unit}{\mathds{1}}

\algrenewcommand\algorithmicrequire{\textbf{Initialization:}}

\newcommand{\Kalman}{\mathcal{K}}

\newcommand{\Xbar}{\overline{X}}

\newcommand{\bfD}{\mathbf{D}}
\newcommand{\Rhat}{\widehat{R}}

\newcommand{\Khat}{\widehat{K}}
\newcommand{\Rtilde}{\widetilde{R}}

\newcommand{\Shat}{\widehat{S}}

\newcommand{\bfP}{\mathbf{P}}
\newcommand{\bfPhat}{\widehat{\mathbf{P}}}

\newcommand{\calU}{\mathcal{U}}

\newcommand{\Gain}{\mathcal{G}}

\begin{document}
\title{Performance of Ensemble Kalman filters in large dimensions}
\author[1]{Andrew J.  Majda \thanks{jonjon@cims.nyu.edu}}
\author[2]{Xin T. Tong \thanks{mattxin@nus.edu.sg}}
\affil[1]{Courant Institute of Mathematical Sciences, New York University}
\affil[2]{Department of Mathematics, National University of Singapore}
\date{\today}
\maketitle
\begin{abstract}
Contemporary data assimilation often involves more than a million prediction variables. Ensemble Kalman filters (EnKF) have been developed by geoscientists. They are successful indispensable tools in science and engineering, because they allow for computationally cheap low ensemble state approximation for extremely large dimensional turbulent dynamical systems. The practical finite ensemble filter like EnKF necessarily involve modifications such as covariance inflation and localization, and it is a genuine mystery why they perform so well with small ensemble sizes in large dimensions. This paper provides the first rigorous stochastic analysis of the accuracy and covariance fidelity of EnKF in the practical regime where the ensemble size is much smaller than the large ambient dimension for EnKFs with random coefficients. A challenging issue overcome here is that  EnKF in huge dimensions introduces unavoidable  bias and model errors which need to be controlled and estimated.
\end{abstract}
\begin{keywords}
effective dimension, Mahalanobis norm, random matrices, Lyapunov function
\end{keywords}

\section{Introduction}
With the growing importance of accurate predictions and the expanding availability of data in geoscience and engineering, data assimilation for high dimensional dynamical systems has never been more crucial. The ensemble Kalman filters (EnKF) \cite{evensen03, bishop01,And01, TAB03} are ensemble based algorithms well designed for this purpose. They quantify the uncertainty of an underlying system using the sample information of an ensemble $\{X^{(k)}_n\}^K_{k=1}$. In most applications, the ensemble size $K\sim 10^2$ is much smaller than the ambient space dimension $d\sim 10^6$, thereby significantly reduces the computational cost. The simplicity of EnKF and its close to accurate performance has fueled  wide applications in various fields of geophysical science \cite{MH12,kalnay03}.
\par
However, why does EnKF work well with a small ensemble has remained a complete mystery.  The existing theoretical EnKF performance literature focuses mostly on the impractical scenario where the ensemble size goes to infinity \cite{mandel2011convergence, LTT14}. When the ensemble size is finite,  the only known recent results are wellposedness, nonlinear stability, and geometric ergodicity \cite{KLS14,TMK15non, KMT15, TMK15}. For continuous EnKF analogues, similar results and filter error bounds are obtained when the forecast model admits uniform contraction \cite{DT16, DKT16a, DKT16b}. These theories are inadequate to explain EnKF performance in practice. On the other hand, the practitioners often attribute the EnKF success to the existence of a low effective filtering dimension. This means most of the filtering uncertainty is contained in $p\sim 10$ directions, so the ensemble size is still comparatively large. But the definition of the effective filtering dimension has remained elusive, as its associated subspace often evolves with the dynamics. How  the filter ensemble is attracted to this subspace is even more puzzling. 
\par
Another layer of complexity that shrouds the study of EnKF is its practical variants. The sampling formulation of EnKF leaves it many structural flaws. Meteorologists and engineers have invented various methods to remedy these problems, including square root formulations, covariance inflation, and localization \cite{And07, And08, LDN08, LKMD09}. The derivations of these methods rely purely on physical or statistical intuition, where the filter ensemble is often assumed to be Gaussian distributed. How these methods contribute to the bias and model error introduced by EnKF formulation in practical scenarios lacks rigorous explanation. 
\par
As a consequence of these two theoretical gaps, there are no existing theoretical guidelines on how to choose the ensemble size,  or which and how augmentation should be implemented. This paper intends to fill these gaps by developing a rigorous error analysis framework for a properly augmented EnKF. We study these issues here in the challenging context of Kalman filter with random coefficients \cite{Bou93} in large dimensions \cite{MT16uq}. Although this setting is less difficult than the fully nonlinear case  \cite{TMK15non, TMK15}, it is much richer than the deterministic case with applications to stochastic turbulence models \cite{MH12,MT16uq}.

The remainder of this introduction outlines the objective, strategy, and main results that are proved in detail in the remainder of the paper. We note that \cite{MT16uq} provides a simple rigorous treatment of the same issues without the effect of finite ensemble. 
\subsection{Intrinsic performance criteria}
\label{sec:introcriteria}
 Consider the following signal-observation system with random coefficients
\begin{equation}
\label{sys:random}
\begin{gathered}
X_{n+1}=A_n X_n+B_n+\xi_{n+1},\quad \xi_{n+1}\sim \mathcal{N}(0, \Sigma_n);\\
Y_{n+1}=H_n X_{n+1}+\zeta_{n+1},\quad \zeta_{n+1}\sim \mathcal{N}(0,I_q).
\end{gathered}
\end{equation}
 We assume the signal variable $X_n$ is of dimension $d$, the observation variable $Y_n$ is of dimension $q\leq d$.   The realizations of the dynamical coefficients $(A_n, B_n,\Sigma_n)$, the observation matrix $H_n$, as long as $Y_n$ are assumed to be available, and the objective is to estimate $X_n$. The random coefficients setting allows us to model  internal dynamical intermittencies, where the effective low dimensional subspace may evolve non-trivially.
\par
The optimal filter for system \eqref{sys:random} is the Kalman filter \cite{Bou93, LS01}, assuming $(X_0,Y_0)$ is Gaussian distributed. It estimates $X_{n}$ with a Gaussian distribution $\mathcal{N}(m_n, R_n)$, where the mean and covariance follow a well known recursion:
\begin{equation}
\label{sys:optimal}
\begin{gathered}
\text{Forecast:}\,\,\hat{m}_{n+1}=A_n m_n+B_n,\quad\Rhat_{n+1}=A_n R_nA_n^T+\Sigma_n\\
\text{Assimilation:}\,\, m_{n+1}=\hat{m}_{n+1}+\Gain_n(\Rhat_{n+1})(Y_{n+1}-H_n \hat{m}_{n+1}),\quad R_{n+1}=\Kalman_{n}(\Rhat_{n+1}),\\
\Gain_{n}(C)=C H_n(I_q +H_n C H^T_n )^{-1},\quad \Kalman_{n}(C)=C-\Gain_{n}(C)H_nC.\\
\end{gathered}
\end{equation}
Unfortunately, the Kalman filter \eqref{sys:optimal} is not applicable for high dimensional problems, as it has a computational complexity of $O(d^2q)$. Nevertheless, its optimality indicates that $R_n$ directly describes how well system \eqref{sys:random} can be filtered. Moreover, the classical Kalman filter analysis \cite{Bou93} indicates that $R_n$ converges to a stationary solution $\Rtilde_n$ of \eqref{sys:optimal}, which is independent of  filter initializations. In other words, conditions on the stationary solution $\Rtilde_n$ can be viewed as intrinsic filtering criteria for  system \eqref{sys:random}. Moreover, as $\Rtilde_n$ can be estimated or computed in many scenarios, these criteria can be used as filtering guidelines. There is abundant literature on  various notions of stability of Kalman filters and Riccati equations \cite{Buc67, BR72, Buc72, Kal72, Buc75}. 
\par
In particular for our interest, an augmented Kalman filter stationary solution $\Rtilde_n$ can be used to define the effective dimension. As mentioned earlier, it is conjectured that EnKF performs well with small ensemble size, because there are only $p<K$ dimensions of significant uncertainty during the filtering. Throughout this paper, a threshold $\rho>0$ is assumed to separate the significant dimensions from the others. So one way to formulate the low effective dimension requirement would be 
\[
\Rtilde_n \text{ has at most $p$ eigenvalues above } \rho.
 \] 
 The details of the Kalman filter augmentation, along with the additional low dimension requirements of the unstable directions of $A_n$ and $\Sigma_n$ will be given in the formal Assumption \ref{aspt:lowdim}.

\subsection{EnKF with small ensemble sizes}
\label{sec:introEnKF}
In essence, EnKF is a Monte-Carlo simulation of the optimal filter \eqref{sys:optimal}. It utilizes an ensemble $\{X^{(k)}_n\}$ to describe the filtering uncertainty, and uses the ensemble mean $\Xbar_n$ as the estimator for $X_n$. In the forecast step, an forecast ensemble $\{\Xhat^{(k)}_{n+1}\}$ is generated from the posterior ensemble $\{X^{(k)}_n\}$ simulating the forecast model in \eqref{sys:random}. In the assimilation step, the Kalman update rule of \eqref{sys:optimal} applies, while the mean and covariance are  replaced by ensemble versions. This effectively bring down the computation cost. But the small sample formulation brings up four structural issues. Fortunately, there are existing practical variants of EnKF that can alleviate these problems. By incorporating these augmentations, our theoretical results in return provide rigorous justifications for their improvement in practice.
\par
 The first problem is the covariance rank deficiency. The sample covariance of an ensemble of size $K$ can only be of rank at most $K-1$, so  uncertainties on the $d-K+1$ complementary directions are ignored. The observation operator may accumulate all these underestimated uncertainties, and create a huge bias in the assimilation step.  In practice, this problem is usually remedied by adding a constant inflation to the forecast covariance. In our effective dimension formulation, the directions of significant uncertainty in principle should be captured by the ensemble, while the other directions are under represented. Then with an ensemble covariance matrix $C$, it is intuitive to add $\rho I_d$ to it in order to capture the uncertainty in the ambient space, making the effective covariance estimator $C^\rho:=C+\rho I_d$.
 \par
 The second problem comes from the instability of the dynamics. In many realistic models, intermittent instability comes from the unstable directions of the forecast operator $A_n$ or genuine large surges in $\Sigma_n$. This may not be captured by the ensemble at previous step, and it is necessary to include it in the forecast step. Evidently, in order for the effective filtering dimension to be $p$, the dimension of such instability must be below $p$ as well. 
 \par
 The third problem is the covariance decay from sampling effect. The standard EnKF forecast step intends to recover the covariance forecast of \eqref{sys:optimal}, so $\E \Chat_{n+1}=\Rhat_{n+1}$, assuming $C_n=R_n$. However, the Kalman covariance update operator $\Kalman_{n}$ is a concave operator, see Lemma \ref{lem:Kalconcave}. The Jensen's inequality indicates that the average posterior covariance $\E\Kalman_{n}(\Chat_{n+1})$ is below the target covariance $\Kalman_{n}(\Rhat_{n+1})$, see Lemma \ref{lem:corcoarse} for details. The practical remedy is a multiplicative inflation, so the average forecast covariance is $\E\Chat_{n+1}=r\Rhat_{n+1}$ with a $r>1$. The effect of such inflation has been rigorously studies in \cite{FB07} but only for $d=1$. 
\par 
Lastly, small samples may create spurious correlation in high dimensions. As an easy example, let $X^{(k)}$ be $K$ i.i.d. $\mathcal{N}(0, I_d)$ random vectors. Then by the Bai-Yin's law \cite{Ver11}, the spectral norm of the sample covariance is roughly $1+\sqrt{d/K}$, instead of the true value $1$. In our small sample $K\ll d$ setting, the sampling error is simply horrible. In order to remove such correlation, practical remedies often involve localization procedures. Here we simplify this operation by projecting the sample covariance matrix to its $p$ leading eigenvectors. Mathematically speaking, this ensures the forecast sample covariance can concentrate on the right value. Ideally, if the effective dimension is lower than $p$, such spectral projection will not change much of the covariance matrix.  
\par
Section \ref{sec:EnKF} will give the explicit formulation of the prescribed EnKF, where Algorithms \ref{alg:EnKF} summarizes the procedures.

 \subsection{Covariance fidelity and Mahalanobis error}
\label{sec:mahaintro}
Practical filter performance analysis requires concrete estimates of the filter error $e_n=\Xbar_n-X_n$. This appears to be very difficult for EnKF, because the covariance relations in \eqref{sys:optimal} no longer hold deterministically. To make matters worse, the random sampling error  propagates through a nonlinear update rules as in  \eqref{sys:optimal}. So it is nearly impossible to trace the exact distribution of $e_n$, while it is clearly not Gaussian. 
\par
The existing analysis of EnKF focus mostly on its difference with the optimal filter \eqref{sys:optimal} \cite{FB07, mandel2011convergence, LMT11, LTT14, DKT16a, DKT16b, DT16}. These results often require either the ensemble to be Gaussian distributed at each step, or only consider the case where the ensemble size $K$ reaches infinity. Unfortunately, such assumptions are unlikely to hold in  practice, since  there is computational advantage for EnKF only when $K<d$. 
\par
A more pragmatic strategy would be looking for  intrinsic error statistical relations. In particular, it is important to check whether the reduced covariance estimators dominate the real error covariance, as  underestimating error covariance often causes  severe filter divergence. The Mahalanobis norm is a natural choice for this purpose. Given a  $d\times d$ positive definite (PD) matrix $C$, it generates  a Mahalanobis norm on $\reals^d$:
\begin{equation}
\label{eqn:maha}
\|v\|_C^2:=v^T[C]^{-1}v. 
\end{equation}
This norm is central in many Bayesian inverse problems. For example, given the prior distribution of $X$ as $\mathcal{N}(b, C)$, and a linear observation $Y=HX+\xi$ with Gaussian noise $\xi\sim \mathcal{N}(0,\Sigma)$, the optimal estimate is the minimizer of $\|x-b\|^2_C+\|Y-Hx\|^2_\Sigma. $
In our context, it is natural to look at the non-dimensionalized Mahalanobis error $\frac{1}{d}\|e_n\|^2_{C_n^\rho}$. According to the EnKF formulation described above, the true state is estimated by $\mathcal{N}(\Xbar_n, C_n^\rho)$. If the hypothesis holds, $\frac{1}{d}\|e_n\|^2_{C_n^\rho}$ should roughly be of constant value.  And by showing the Mahalanobis error is bounded, we also show the error covariance estimate $C_n^\rho$ more or less captures the real error covariance, which is known as the covariance fidelity.  
\par
The Mahalanobis error also has surprisingly good dynamical properties in large dimensions \cite{MT16uq}. In short, $\|e_n\|^2_{C_n^{\rho}}$ is a dissipative (also called exponentially stable) sequence. This is actually carried by an intrinsic inequality induced by the Kalman update mechanism. In the optimal filter, it is formulated as 
\begin{equation}
\label{eqn:optdiss}
A_n^T(I-\Gain_n(\Rhat_{n+1})H_n)^TR_{n+1}^{-1}(I-\Gain_n(\Rhat_{n+1})H_n)A_n\preceq A_n^T\Rhat^{-1}_{n+1}A_n\preceq R_n^{-1}.
\end{equation}
This was exploited by previous works in the literature \cite{Bou93, RGYU99} to show robustness of Kalman filters and extended Kalman filters in a fixed finite dimension. See for example, the first displayed inequality in the proof of Theorem 2.6 of \cite{Bou93}. One of the major results of this paper can be informally formulated as follows  
\begin{thm}
\label{thm:intro}
When applying the EnKF described in Section \ref{sec:introEnKF} to system \eqref{sys:random} with an effective filtering dimension $p$ and proper inflation parameters $r>1, \rho>0$, there is a constant $\bfD$ such that if $K>\bfD p$, the nondimenionalized Mahalanobis filter error $\E\frac{1}{d}\|e_n\|_{C^\rho_n}$  converges to a constant independent of $d$ and the filter initialization. 
\end{thm}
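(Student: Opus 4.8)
The plan is to treat $V_n:=\frac1d\norm{e_n}^2_{C_n^\rho}$ as a random Lyapunov function and prove that its expectation obeys a contraction-plus-forcing inequality $\E V_{n+1}\le \alpha\,\E V_n+\beta$ with a rate $\alpha<1$ and a forcing $\beta$ that are \emph{both} independent of the ambient dimension $d$; iterating then yields convergence of $\E V_n$ to the fixed point $\beta/(1-\alpha)$, which is independent of $d$ and of the filter initialization. The first step is to write down the one-step recursion for the filter error $e_n=\Xbar_n-X_n$. Unwinding the forecast and assimilation steps of the EnKF (perturbed-observation version) against the true dynamics \eqref{sys:random}, and writing the ensemble gain $\Khat_{n+1}=\Gain_n(\Chat^\rho_{n+1})$ built from the inflated forecast covariance, one obtains
\begin{equation}
\label{eqn:errrec}
e_{n+1}=(I-\Khat_{n+1}H_n)A_n e_n + (I-\Khat_{n+1}H_n)(\bar\xi_{n+1}-\xi_{n+1}) + \Khat_{n+1}(\zeta_{n+1}+\bar\zeta_{n+1}),
\end{equation}
where $\bar\xi_{n+1},\bar\zeta_{n+1}$ are the ensemble means of the forecast and perturbed-observation noises. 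The decisive structural feature is that the last two terms are mean-zero and independent of $e_n$ conditionally on the forecast $\sigma$-algebra, while the deterministic part is driven by the contracting transport map $(I-\Khat_{n+1}H_n)A_n$.

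The heart of the argument is a Mahalanobis contraction. For the exact filter, the intrinsic inequality \eqref{eqn:optdiss} states that $A_n^T(I-\Gain_n(\Rhat_{n+1})H_n)^T R_{n+1}^{-1}(I-\Gain_n(\Rhat_{n+1})H_n)A_n\preceq R_n^{-1}$, i.e. the transport map is a contraction in the Mahalanobis metric generated by the Riccati covariance. I would establish the analogue for the inflated ensemble covariances: using the monotonicity and concavity of $\Kalman_n$ and $\Gain_n$ (Lemma \ref{lem:Kalconcave}), together with the slack supplied by the additive inflation $\rho I_d$ and the multiplicative inflation $r>1$, one shows
\begin{equation}
\label{eqn:enkfdiss}
A_n^T(I-\Khat_{n+1}H_n)^T [C_{n+1}^\rho]^{-1}(I-\Khat_{n+1}H_n)A_n \preceq \alpha\, [C_n^\rho]^{-1}
\end{equation}
with $\alpha<1$, on the event that the ensemble covariance is faithful. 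The additive inflation guarantees $C_n^\rho\succeq\rho I_d$, so the inverse appearing in the Mahalanobis norm is uniformly bounded and \eqref{eqn:enkfdiss} is stable under small perturbations of $C^\rho$; the multiplicative inflation $r$ is what compensates the Jensen-type covariance decay of Lemma \ref{lem:corcoarse}, ensuring that the inflated forecast covariance dominates the true error covariance in expectation rather than underestimating it.

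The Lyapunov estimate is then assembled by conditioning on the forecast $\sigma$-algebra, under which $\Khat_{n+1}$ and $C_{n+1}^\rho$ are measurable and the fresh noises in \eqref{eqn:errrec} are independent and mean-zero. The cross term vanishes in conditional expectation, the transported quadratic term is bounded by $\alpha\norm{e_n}^2_{C_n^\rho}$ through \eqref{eqn:enkfdiss}, and the noise contribution reduces to trace-type quantities. Here the low-effective-dimension Assumption \ref{aspt:lowdim} is essential: it forces these traces to scale like $p$ rather than $d$, so that after the $1/d$ non-dimensionalization the additive forcing $\beta$ is a genuinely dimension-free constant. Taking expectations yields $\E V_{n+1}\le \alpha\,\E V_n+\beta$, from which the claimed geometric convergence to a dimension- and initialization-independent limit follows.

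The main obstacle is the covariance fidelity, namely justifying \eqref{eqn:enkfdiss} with $\alpha<1$ uniformly in $n$ in the regime $K\ll d$. The forecast covariance $\Chat_{n+1}$ is a sample covariance of only $K$ members that have been pushed through the previous nonlinear Kalman update and then spectrally projected onto the $p$ leading eigenvectors, so it is neither Wishart-distributed nor independent of $e_n$. The crux is to show, exploiting the random-coefficient structure and the spectral projection as a surrogate for localization, that this projection collapses the effective dimensionality of the sampling problem from $d$ to $p$, so that Bai--Yin--type deviations scale like $\sqrt{p/K}$ rather than the disastrous $\sqrt{d/K}$. The condition $K>\bfD p$ is precisely the threshold making this relative sampling error small enough for the inflation margin to absorb, and one must then propagate the fidelity forward in $n$ without accumulation — the step I expect to demand the most care.
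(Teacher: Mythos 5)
Your overall skeleton matches the paper's: a Mahalanobis dissipation inequality for the error, plus a random-matrix concentration argument showing that the spectrally projected sample covariance behaves like a $p$-dimensional (not $d$-dimensional) sampling problem when $K>\bfD p$, with the inflation margin $r>1,\rho>0$ absorbing the sampling error. This is indeed how the paper proceeds (Lemma \ref{lem:contractensemble} and Corollary \ref{cor:RMT}/Theorem \ref{thm:RMT}). However, there is a genuine gap at exactly the point you defer: your inequality \eqref{eqn:enkfdiss} with a deterministic $\alpha<1$ only holds \emph{on the high-probability event} that concentration succeeds, and on the complementary rare event the correct factor is $\chi_{n+1}\mu_{n+1}/r$, where $\mu_{n+1}$ (sampling fluctuation) and $\chi_{n+1}$ (deflation caused by the spectral projection) are \emph{unbounded} and correlated with the past through $C_n$. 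Consequently you cannot pass to expectations and obtain $\E V_{n+1}\le\alpha\,\E V_n+\beta$: the term $\E_n[\chi_{n+1}\mu_{n+1}]\,\|e_n\|^2_{C_n^\rho}$ cannot be bounded by $\alpha\|e_n\|^2_{C_n^\rho}$ uniformly, since $\chi_{n+1}$ is controlled only by the ratio $\nu_{n+1}=\inf\{\nu\ge 1: C_{n+1}\preceq\nu\Rtilde_{n+1}\}$, which obeys the multiplicative recursion $\nu_{n+1}\le\max\{1,\nu_n\lambda_{n+1}/r\}$ and can drift upward after a bad sampling event. The paper's entire technical core (Sections \ref{sec:corcontrol} and onward) exists to handle this: moment and tail bounds for $\lambda,\mu$ inside the rare events (Corollary \ref{cor:RMT}), a Riccati comparison principle (Lemmas \ref{lem:corcoarse}, \ref{lem:corfine}), the uniform observability Assumption \ref{aspt:obcon} which resets the covariance bound after $m$ steps (Lemma \ref{lem:boucontract}), and a bespoke Lyapunov function $\psi(\nu)=\exp(D_\psi\log^3\nu)\,\nu^m(1+a_r\nu)$ whose $\exp(\log^3)$ form is dictated by the sub-exponential tails of $\lambda$; the final estimate is for the coupled quantity $\E\sqrt{\psi(\nu_n)}\,\|e_n\|_{C_n^\rho}$ via Cauchy--Schwarz (Theorem \ref{thm:strong}), not for $\E\|e_n\|^2_{C_n^\rho}$ alone. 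Your proposal never invokes observability at all, so even granting concentration at each step, the argument cannot close.

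Two smaller inaccuracies. First, your claim that Assumption \ref{aspt:lowdim} makes the noise traces scale like $p$ is wrong (and internally inconsistent with calling $\beta$ dimension-free after dividing by $d$): in the paper's Lemma \ref{lem:contractensemble} the forcing terms are
\begin{equation*}
\operatorname{tr}\bigl[(I-G_{n+1}H_n)[C^\rho_{n+1}]^{-1}(I-G_{n+1}H_n)^T\Sigma_n\bigr]\le \tfrac1r\chi_{n+1}\mu_{n+1}d,
\qquad
\operatorname{tr}\bigl(G_{n+1}G_{n+1}^T[C^\rho_{n+1}]^{-1}\bigr)\le \chi_{n+1}d,
\end{equation*}
i.e.\ they scale like $d$, and it is precisely the $1/d$ non-dimensionalization that renders the limit constant; the role of $p$ is confined to the concentration threshold $K>\bfD p$. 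Second, your error recursion is written for a perturbed-observation EnKF with noise means $\bar\xi_{n+1},\bar\zeta_{n+1}$, whereas the filter analyzed in the paper is a deterministic EAKF-type square-root filter whose error recursion is $e_{n+1}=(I-G_{n+1}H_n)A_ne_n-(I-G_{n+1}H_n)\xi_{n+1}+G_{n+1}\zeta_{n+1}$; this changes only bookkeeping, but the statement you are proving concerns the paper's algorithm, not the perturbed-observation variant.
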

The detailed conditions and statements are given by Section \ref{sec:results} and Theorem \ref{thm:weak}. This result rigorously explains  the effectiveness of  EnKF in practice. It is also important to note that the condition requires the ensemble size to grow linearly with the effective dimension, instead of exponentially as in the case of particle filters \cite{Del96, CD02}. This makes the EnKF an promising tool for system with medium size $d$, even if there is no low effective dimensionality assumption.

There are  two useful corollaries from Theorem \ref{thm:intro}. First, its proof indicates the EnKF is exponentially stable for initial ensemble shift, see Corollary \ref{cor:expstable} below. In other words, if two ensembles start with the same ensemble spread, but different means, the difference in their mean will decay exponentially fast. Second, when the system noises are of scale epsilon, the EnKF filter error is also of scale epsilon. This is called the filter accuracy of EnKF, and evidently is very useful when the system \eqref{sys:random} is observed frequently. See Corollary \ref{cor:accuracy} below.

\subsection{Framework of the proof: random matrix concentration and Lyapunov functions}
In order to adapt the Mahalanobis error dissipation \eqref{eqn:optdiss} for EnKF, it is essential to recover $A_n^T\Rhat^{-1}_{n+1}A_n\preceq R_n^{-1}$ with $\Rhat_{n+1}$ replaced by the random ensemble forecast matrix $\Chat_{n+1}$. Such problem has rarely been discussed by the random matrix theory (RMT) community, as it involves matrix inversion and non-central ensembles. Fortunately, standard RMT arguments like Gaussian concentration and $\epsilon$-net covering can be applied, and roughly speaking we can show $\Chat^{-1}_{n+1}\preceq \Rhat^{-1}_{n+1}$. Both covariance inflation techniques and the effective dimensionality are the keys to make this possible. The additive inflation keep the matrix inversion nonsingular, and the multiplicative inflation creates enough room for a spectral concentration to occur at the  effective dimensions.

A RMT result, Theorem \ref{thm:RMT}, guarantees that the Mahalanobis error dissipates like \eqref{eqn:optdiss} with high probability. But the rare sampling concurrence may decrease the sample covariance and creates underestimation. Such intermittency  genuinely exists in various stochastic  problems, and the general strategy is to find a Lyapunov function. In this paper, our Lyapunov function exploits the concavity of the Kalman covariance update operator $\Kalman$, and the boundeness of filter covariance from an observability condition.

\subsection{Preliminaries}
The remainder of this paper is organized as follows. Section \ref{sec:EnKF} formulates an EnKF with specified augmentations, which is summarized by Algorithm \ref{alg:EnKF}. Section \ref{sec:results} formulates the low effective dimension Assumption \ref{aspt:lowdim} and uniform observability Assumption \ref{aspt:obcon}. Theorem \ref{thm:weak} shows that these assumptions guarantee the Mahalanobis error of EnKF decays to a constant geometrically fast. Important Corollaries \ref{cor:expstable} and \ref{cor:accuracy} about exponential stability and accuracy follow immediately. Before diving into the proofs, a simple application of our framework to a stochastic turbulence model is given in Section \ref{sec:example}. The main proof components are illustrated in Section \ref{sec:proof}. The noncentral RMT result is located in Section \ref{sec:RMT}.    

Before we start the discussion, here are a few standard notations we will use in the following.  $\|C\|$ denotes the $l_2$ operator norm of a matrix $C$, and $|x|$ is the $l^2$  norm of a vector $x$.  We use $x\otimes x$ to denote the rank $1$ matrix $xx^T$ generated by a column vector $x$. We use $C\in PD (PSD)$ or simply $C$ is PD (PSD) to indicate a symmetric matrix $C$ is positive definite (semidefinite). $[C]_{j,k}$ denotes the $(j,k)$-th coordinate of a matrix $C$, and $[C]_{I^2}$ is the sub-matrix with both indices in a set $I$. And $A\preceq B$ indicates that $B-A\in PSD$. $\lceil a\rceil$ is the smallest integer above a real number $a$. 

Given an ensemble of vectors $x^{(1)},\ldots, x^{(K)}$, we use $\bar{x}=\frac{1}{K}\sum_{k=1}^K x^{(k)}$ to denote its ensemble average. $\Delta x^{(k)}=x^{(k)}-\bar{x}$ to denote the deviation of each ensemble. Some times, it is easier to describe an ensemble in terms of its mean $\bar{x}$, and  spread matrix $S=[\Delta x^{(1)},\ldots, \Delta x^{(K)}]$.

We assume the distribution of  filter initializations is known. Generally speaking, there are no specific requirements for their values. But some results implicitly rely on the invertibility of the covariance matrices. 

Following \cite{Bou93}, we say a random sequence $Z_0,Z_1,\ldots$ is \emph{stationary}, if $(Z_0,Z_1,\ldots)$ and $(Z_k,Z_{k+1},\ldots)$ have the same distribution. We say such sequence is \emph{ergodic}, if there is only one invariant measure for the shifting map $(Z_0,Z_1,\ldots)\mapsto (Z_1,Z_2,\ldots)$.

\section{EnKF with low effective dimensions}
\label{sec:EnKF}
\par
EnKF utilizes an ensemble  $\{X^{(k)}_n\}_{k=1,\ldots, K}$ to describe the underlying uncertainty.   Such formulation effectively brings down the computation cost of each filtering iteration, and have shown good filtering skills through numerous numerical evidences.

\subsection{Forecast Step with instability representation} 
The forecast step propagates the underlying uncertainty of time $n$ to time $n+1$.  Since the effective posterior distribution is $\mathcal{N}(\Xbar_n, C_n^\rho)$, the target forecast distribution should be $\mathcal{N}(A_n\Xbar_n+B_n, A_nC_nA_n^T +\rho A_n A_n^T+\Sigma_n).$ In order to remedy the covariance decay due to sampling, we also multiply the target covariance with a ratio $r>1$. 
\par
 In the ensemble formulation, the posterior ensemble is propagated to the forecast ensemble, $\Xhat^{(k)}_{n+1}=A_nX^{(k)}_{n}+B_n+\xi_{n+1}^{(k)}$. The noise $\xi_{n+1}^{(k)}$ is Gaussian distributed with mean zero, it intends to capture the instantaneous instability of the system. Its covariance $\Sigma_n^+$  will be specified soon. Denote  the inflated forecast ensemble spread $\sqrt{r}A_n S_n+\sqrt{r}[\Delta\xi_{n+1}^{(1)},\ldots, \Delta\xi_{n+1}^{(K)}]$ as $\Shat_{n+1}$, where $S_n$ is the ensemble spread matrix at step $n$:
 \[
 S_n=[\Delta X^{(1)}_n,\cdots, \Delta X^{(K)}_n],\quad \Delta X^{(k)}_n=X^{(k)}_n-\Xbar_n.
 \]
 The corresponding forecast covariance is given by 
\begin{equation}
\label{eqn:enscor}
\Chat_{n+1}=\frac{\Shat_{n+1}\Shat_{n+1}^T}{K-1},\quad \E_n \Chat_{n+1}=rA_n C_n A_n^T+r\Sigma_n^+. 
\end{equation}
Here $\E_n$ denotes expectation conditioned on system and ensemble realization up to time $n$. Since $\Chat_{n+1}$ is an ensemble covariance, it under-represents the ambient space uncertainty. This can be remedied by adding a constant covariance $\rho\tau I_d$. Here $\tau>0$ is a parameter that increases our framework flexibility. Its value depends on the model  setting, see an example in Section \ref{sec:example}. Our framework is valid independent of $\tau$, and in the first reading, $\tau$ can be seen as $1$. In order to preserve covariance fidelity, intuitively we want
 \begin{equation}
\label{eqn:sampledominate}
\E_n \Chat_{n+1}^{\tau\rho}=rA_nC_nA_n^T+r\Sigma^+_n+\rho \tau I_d
\succeq r(A_nC_nA_n^T +\rho A_n A_n^T+\Sigma_n). 
\end{equation}
 For this purpose, let $\bfPhat_n$ be the projection to the eigen-subspace of  $\rho A_n A_n^T+\Sigma_n-\rho \tau/r I_d$ with positive eigenvalues. If we let $\Sigma^+_n$ be the positive part of the prescribed matrix,
 \[
 \Sigma^+_n=\bfPhat_n(\rho A_n A_n^T+\Sigma_n-\rho \tau/r I_d)\bfPhat_n,
 \]
 it is straightforward to check that \eqref{eqn:sampledominate} holds. $\Sigma^+_n$ captures the instantaneous instability of system \eqref{sys:random}. It essentially includes the unstable directions of $A_n$ with singular value above a spectral gap $\sqrt{\tau/r}$, and the directions of $\Sigma_n$ with eigenvalue above $\rho\tau/r$. Its low dimensionality will be another crucial component of the low effective dimension Assumption \ref{aspt:lowdim}.

\subsection{Assimilation step with spectral projection}
Once the forecast covariance is computed, the Kalman gain matrix can be obtained through
\[
G_{n+1}=\Gain_n(\Chat^{\tau\rho}_{n+1})=\Chat^{\tau\rho}_{n+1}H_n^T(I_q+H_{n}\Chat^{\tau\rho}_{n+1} H^T_{n})^{-1}.
\]
We update the mean from $\overline{\Xhat}_{n+1}=\frac1K\sum_k \Xhat^{(k)}_{n+1}$ to 
\begin{align*}
\Xbar_{n+1}&=\overline{\Xhat}_{n+1}+G_{n+1}[Y_{n+1}-H\overline{\Xhat}_{n+1}].
\end{align*}
Based on the classical Kalman formulation, the target posterior covariance should be $\Kalman_{n}(\Chat^{\tau\rho}_{n+1})$, with the Kalman covariance update operator
\begin{align}
\notag
\Kalman_{n}(C)&:=C-C H_n^T(I_q+H_nC H_n^T)^{-1}H_nC\\
\label{eqn:Kalmanoper}
&=(I-\Gain_{n}(C)H_{n})C(I-\Gain_{n}(C)H_n)^T+ \Gain_n(C)\Gain_n(C)^T.
\end{align}

Unfortunately, $\Kalman_{n}(\Chat^{\tau\rho}_{n+1})$ cannot be directly obtained as a posterior ensemble covariance matrix, since it may have rank $d$ instead of $K-1$. Moreover, the forecast sample covariance matrix may have spurious correlation due to lower than dimension sampling size. Due to this reason, we project  $\Kalman_{n}(\Chat^{\tau\rho}_{n+1})$ to its eigenspace associated with the largest $p$ eigenvalues. The posterior ensemble should have an effective covariance $C^\rho_{n+1}$ matching it. If we denote the projection as $\bfP_{n+1}$, let $Q DQ^T $ be the eigenvalue decomposition of $\bfP_{n+1}(\Kalman_{n}(\Chat^{\tau\rho}_{n+1})-\rho I)\bfP_{n+1}$, and $\Psi\Lambda\Phi^T$ be the SVD decomposition of $\Shat_{n+1}$. We can update the posterior spread $S_{n+1}$  through an ensemble adjustment Kalman filter (EAKF) type of update, $S_{n+1}=QD^{1/2}\Lambda^\dagger\Psi^T\Shat_{n+1}$, then
\begin{equation}
\label{eqn:EAKF}
C_{n+1}=\tfrac{S_{n+1}S_{n+1}^T}{K-1}=\bfP_{n+1}(\Kalman_{n}(\Chat^{\tau\rho}_{n+1})-\rho I)\bfP_{n+1}.
\end{equation}
Notice that the directions with eigenvalues of $\Kalman_{n}(\Chat^{\tau\rho}_{n+1})$ below the threshold $\rho$ are filtered out on the right hand side. Moreover,
it is straight forward to verify that
\begin{equation}
\label{eqn:objcor}
\Kalman_{n}(\Chat^{\tau\rho}_{n+1})+\rho I_d\succeq C_{n+1}+\rho I_d\succeq \Kalman_{n}(\Chat^{\tau\rho}_{n+1}). 
\end{equation}
In case one does not have a low effective dimension, so $p=d$, the spectral projection is trivial $\bfP_{n+1}=I_d$. 

Before we move on and analyze EnKF performance, let us briefly discuss the computational complexity involved in the analysis step, assuming $d>K^2>p^2$ and standard numerical procedures \cite{GV83}. It is important to notice that $\Chat_{n+1}=\frac1K\Shat_{n+1}\Shat_{n+1}^T$ is of rank $K$. Moreover the following Woodbury matrix identity holds,
\[
(I_q+H_n\Chat^{\tau\rho}_{n+1}H_n^T)^{-1}=Q_n-Q_n (I_K+\Shat_n^T H_n^T H_n \Shat_n)^{-1} Q_n,
\]
where $I_K+\Shat_n^T H_n^T H_n \Shat_n$ is $K\times K$, and $Q_n=(I_q+\tau\rho H_n H_n^T)^{-1}$  is usually easy to compute, because $H_n$ is often a simple projection. This makes vector products with the matrix 
\[
\Kalman_n(\Chat^{\tau\rho}_{n+1})=\Chat_{n+1}+\tau\rho I_d-(\Chat_{n+1}+\tau\rho I_d) H_n^T (I_q+H_n\Chat^{\tau\rho}_{n+1}H_n^T)^{-1}H_n(\Chat_{n+1}+\tau\rho I_d)
\]
of complexity $O(Kd)$ instead of $O(d^2)$. Consequencely the finding of $p$ largest eigenvalues and associated eigenvectors is of complexity $O(pKd)$ through, for example, the power method. Since the SVD decomposition is identical to standard EAKF, it takes complexity of $O(K^2d)$ \cite{TAB03}. In conclusion, this version of EAKF does not require an additional order of complexity.  We comment that simpler operations may exist with other EAKF formulations. For example \cite{SO08} suggested used letting $S_{n+1}=(I-\frac12G_{n+1} H_n)\Shat_{n+1}$.

\subsection{Summary of the algorithm}
The mathematical formulation of our EnKF can be summarized by Algorithm \ref{alg:EnKF}
\begin{algorithm}
  \caption{EnKF with covariance inflations and spectral projections
    \label{alg:EnKF}}
  \begin{algorithmic}[1]
    \Require{$K$: ensemble size, $p$: effective dimension, $r$: multiplicative inflation, $\rho$ uncertainty significance threshold, $\tau:$ flexibility parameter, $X^{(k)}_0:$ initial ensemble.}
    \For {$n\gets 0\textrm{ to } T-1$}
    \Let {$\Sigma_n^+$}{The positive part of $\rho A_n A_n^T+\Sigma_n-\rho\tau/rI_d$}.
    	\State Generate $\xi^{(k)}_{n+1}\sim \mathcal{N}(0,\Sigma_n^+), k=1,\ldots, K$.
	\Let {$\overline{\Xhat}_{n+1}$}{$A_n \Xbar_n+B_n+\frac{1}{K}\sum_{k=1}^K \xi_{n+1}^{(k)}$,}\quad $\Shat_{n+1}\gets\sqrt{r}(A_n S_n+[\Delta\xi_{n+1}^{(1)},\ldots,\Delta\xi_{n+1}^{(K)}]) $. 
       \Let {$\Chat_{n+1}$}{$\frac{1}{K-1}\Shat_{n+1}\Shat_{n+1}^T$}, \quad $G_{n+1}\gets \Chat_{n+1}^{\tau\rho} H_n^T(I_q+H_{n}\Chat^{\tau\rho}_{n+1} H^T_{n})^{-1}$.
       \Let {$\Xbar_{n+1}$}{$\overline{\Xhat}_{n+1}+G_{n+1}(Y_{n+1}-H_n \overline{\Xhat}_{n+1})$}.
       \Let {$\bfP_{n+1}$}{Projection to the largest $p$ eigenvectors of  $\Kalman_{n}(\Chat^{\tau\rho}_{n+1}).$}
       \Let {$S_{n+1}$}{ $A_nS_n$} by an EAKF type of update, so that 
       \[
	\tfrac{S_{n+1}S_{n+1}^T}{K-1}=\bfP_{n+1}(\Kalman_{n}(\Chat^{\tau\rho}_{n+1})-\rho I_d)\bfP_{n+1}.
       \]
	\State \Return {State estimation: $\mathcal{N}(\Xbar_{n+1}, \tfrac{S_{n+1}S_{n+1}^T }{K-1}+\rho I_d$ )}. 
      \EndFor
  \end{algorithmic}
\end{algorithm}

\section{Main results: EnKF performance}
\label{sec:results}

\subsection{Effective low dimensionality}
As explained in Section \ref{sec:introcriteria},  Kalman filters will be employed to formulate the effective dimensionality. Since our EnKF implements covariance inflation techniques, the associated signal-observation system is also augmented with a stronger inflation \cite{MT16uq}:
\begin{equation}
\label{sys:changed}
\begin{gathered}
X'_{n+1}=rA_nX'_n+B_n+\xi'_{n+1},\\
Y'_{n+1}=H_nX'_{n+1}+ \zeta'_{n+1}.\\
\xi'_{n+1}\sim \mathcal{N}(0, \Sigma'_n=r^2\Sigma_n^+ +r^2\tau \rho I_d),\quad \zeta'_{n+1}\sim \mathcal{N}(0, I_q). 
\end{gathered}
\end{equation}
In principle, $r^2$ can be replaced with any ratio above $r$ and our analysis below still holds. We use $r^2$ just for notational simplicity.  The optimal filter for \eqref{sys:changed} is a Kalman filter. The associated covariance solves follows recursion
\begin{equation}
\label{sys:optimalr}
\begin{gathered}
\Rhat'_{n+1}=r^2A_nR'_nA^T_n+\Sigma'_n, \quad R'_{n+1}=\Kalman_{n}(\Rhat'_{n+1}).
\end{gathered}
\end{equation} 
Given the value of $R'_j$, the value of $R'_k$ at a later time $k\geq j$ can be computed by the recursion above. So there is a $\mathcal{F}^c_k$ measurable mapping $\mathcal{R}_{j,k}$ such that $R'_k=\mathcal{R}_{j,k}(R'_j).$ 

Our low dimensionality reference can be formulated through one solution $\Rtilde_k=\mathcal{R}_{j,k}(\Rtilde_j)$:
\begin{aspt}
\label{aspt:lowdim}
The signal observation system has intrinsic dimension $p$, if for a  (stationary) solution $\Rtilde_n=\Riccati_{k,n}(\Rtilde_k)$ of \eqref{sys:optimalr}:
\begin{itemize}
\item The system instability matrix $\Sigma^+_n$ has rank not exceeding $p \,\,\Leftrightarrow\,\,A_nA_n^T+\Sigma_n/\rho$ has at most $p$ eigenvaleus above $\tau/r$.  
\item $\Rtilde_{n}$ has at most $p$ eigenvalues above $\rho$.  
\end{itemize}
\end{aspt}
Although our framework works for an arbitrary solution of \eqref{sys:optimalr}, in most cases any solution will converge to a unique PD stationary sequence, assuming stationarity, ergodicity, weak observability and controllability of the system coefficients, see \cite{Bou93}. It is also straightforward to verify that, if the original system \eqref{sys:random} satisfies the weak observability and controllability condition of \cite{Bou93}, then so does the augmented system \eqref{sys:changed}. A proof of a similar claim is in the appendix of \cite{MT16uq}. Then it makes more sense to impose Assumption \ref{aspt:lowdim} on this stationary solution, and by doing so, the assumption is independent of the initial conditions. This is why we put \emph{stationary} in bracket, the readers should choose the proper variant in application.  

In order to focus on the more interesting EnKF sampling effect, this paper considers a relative simple system setting, so  \eqref{sys:random} is observable through a fixed time interval $m$:
\begin{aspt}
\label{aspt:obcon}
Suppose there are constants $D_A, D_\Sigma, D_R$ such that 
\[
\|A_n^{-1}\|,\|A_n\|\leq D_A,\quad \|\Sigma_n\|\leq D_\Sigma,\quad
\|\Rtilde_n\|,\|\Rtilde_n^{-1}\|\leq D_R.
\] 
Also suppose there is a constant step size $m$, a constant $c_m$, such that the  observability  Gramian defined below satisfies $\mathcal{O}_m\succeq c_m I_d$,
\[
\mathcal{O}_m=\sum_{k=1}^m A_{k,1}^TH_k^TH_kA_{k,1},\quad A_{k,j}=r^{k-j}A_{k-1}\cdots A_j. 
\]
\end{aspt}
Assumption  \ref{aspt:obcon} will simplify the control of a solution of \eqref{sys:optimalr} a lot, for example Lemma \ref{lem:boucontract} shows that $\Riccati_{k,k+m}(C)$ will always be bounded and uses \cite{Bou93}.
\begin{rem}
It is worth noticing that the direct requirement  on system \eqref{sys:random}, Assumption \ref{aspt:obcon},  is quite weak, given that  $(A_n, B_n, H_n, \Sigma_n)$ can be any random sequence.  However, Assumption \ref{aspt:lowdim} is another implicit condition imposed on system \eqref{sys:random}, since the $\Rtilde_n$ is generated through \eqref{sys:optimalr}. The dependence of $\Rtilde_n$ on $(A_n, B_n, H_n, \Sigma_n)$ is in general very involved, even in the deterministic, time homogenous case. Section \ref{sec:example} will further discuss how to verify Assumption \ref{aspt:lowdim}, and demonstrate with an example. More examples for Kalman filters with random coefficients can be found in \cite{MT16uq} and its references. On the other hand, Assumption \ref{aspt:lowdim} is required in theory, simply because it can be verified offline and before executing the algorithm. In practice, it suffices to check on the fly whether the posterior covariance $\Kalman_n(\Chat^{\tau\rho}_{n+1})$ indeed has less than $p$ eigenvalues above $\rho$. If yes, the projection error is zero, the proof in Section \ref{sec:dissmaha} holds as well with $\chi_{n+1}=1$,  so the same results would hold. 
\end{rem}

%
%
%

\subsection{Covariance fidelity and filter performance}
As discussed in Section \ref{sec:mahaintro}, Mahalanobis error  is a natural statistics for covariance fidelity quantification. By proving a stronger but more technical result, Theorem \ref{thm:strong}, we can show that the Mahalanobis error decays to a constant geometrically fast. With stronger conditions, a similar proof should lead to a similar bound for $\E \|e_n\|^s_{C_n^\rho}$ with $s>1$. But we only demonstrate the case with $s=1$ for exposition simplicity.
\begin{thm}
\label{thm:weak}
Suppose system \eqref{sys:random} satisfies the uniform observability  Assumption \ref{aspt:obcon}, and has an intrinsic filtering dimension $p$ as described in Assumption \ref{aspt:lowdim}. For any $c>0$, there exist a  function $F: PSD\to \reals$, a constant $\bfD$, and a sequence $M_n$,  such that when $K>\bfD p$,
\[
\E \|e_n\|_{C^\rho_n}\leq r^{-\frac{n}{6}}\E F(C_0) \sqrt{\|e_0\|^2_{C_0^\rho}+2m}+M_n\sqrt{d},
\]
\[
\E |e_n|\leq r^{-\frac{n}{6}}\sqrt{D_R+\rho}\E  F(C_0) \sqrt{\|e_0\|^2_{C_0^\rho}+2m}+\sqrt{(D_R+\rho)d}M_n.
\] 
The function $F$ and the sequence satisfy the following bounds with a constant $D_F$:
\[
F(X)\leq D_F\exp(D_F\log^3\|X\|),\quad \limsup_{n\to \infty} M_n\leq \frac{1+c}{1-r^{-\frac{m}{6}}}. 
\]
Moreover, the ensemble covariance $C_n$ is bounded by $\Rtilde_n$ most of the time:
\begin{equation}
\label{eqn:cnbound}
\limsup_{n\to \infty}\E \max\{1, \|C_n\Rtilde_n^{-1}\|^m\}\leq 1+c.
\end{equation}
\end{thm}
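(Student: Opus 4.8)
The plan is to reduce the two displayed error bounds together with the covariance bound \eqref{eqn:cnbound} to a single Lyapunov estimate for the Mahalanobis error, and then to establish that estimate by combining a contraction that holds on a high-probability ``good event'' with a Lyapunov control of the rare ``bad events.'' I would begin from the error recursion. Writing the forecast error as $\ehat_{n+1}=\overline{\Xhat}_{n+1}-X_{n+1}=A_ne_n+\tfrac1K\sum_k\xi^{(k)}_{n+1}-\xi_{n+1}$ and the posterior error as $e_{n+1}=(I-G_{n+1}H_n)\ehat_{n+1}+G_{n+1}\zeta_{n+1}$, the Mahalanobis error factorizes through the Kalman update. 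Using $(C^\rho_{n+1})^{-1}\preceq\Kalman_n(\Chat^{\tau\rho}_{n+1})^{-1}$ from \eqref{eqn:objcor} together with the algebraic identity underlying \eqref{eqn:optdiss}, namely $(I-G_{n+1}H_n)^T\Kalman_n(\Chat^{\tau\rho}_{n+1})^{-1}(I-G_{n+1}H_n)\preceq(\Chat^{\tau\rho}_{n+1})^{-1}$ for $G_{n+1}=\Gain_n(\Chat^{\tau\rho}_{n+1})$, a Young-type splitting bounds $\|e_{n+1}\|^2_{C^\rho_{n+1}}$ by $(1+\epsilon)\|\ehat_{n+1}\|^2_{\Chat^{\tau\rho}_{n+1}}$ plus a noise contribution of order $d$ arising from $G_{n+1}\zeta_{n+1}$ and the sampling noise.

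The heart of the contraction is to replace the deterministic dissipation $A_n^T\Rhat^{-1}_{n+1}A_n\preceq R_n^{-1}$ by its ensemble analogue. By \eqref{eqn:sampledominate} one has $\E_n\Chat^{\tau\rho}_{n+1}\succeq rA_nC^\rho_nA_n^T$, and since $A_n$ is invertible by Assumption \ref{aspt:obcon}, in expectation $A_n^T(\E_n\Chat^{\tau\rho}_{n+1})^{-1}A_n\preceq r^{-1}(C^\rho_n)^{-1}$. I would then invoke the noncentral concentration of Theorem \ref{thm:RMT}: when $K>\bfD p$, on an event $\Gamma_n$ of probability exponentially close to one, the random $\Chat^{\tau\rho}_{n+1}$ stays close enough to its conditional mean on the $p$ effective directions that $A_n^T(\Chat^{\tau\rho}_{n+1})^{-1}A_n\preceq r^{-\alpha}(C^\rho_n)^{-1}$ for some $\alpha\in(0,1)$. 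Feeding this through the forecast term yields, on $\Gamma_n$, a genuine single-step bound $\E_n[\mathbf{1}_{\Gamma_n}\|e_{n+1}\|^2_{C^\rho_{n+1}}]\le r^{-\alpha}\|e_n\|^2_{C^\rho_n}+O(d)$. Both inflations are essential here: the additive $\tau\rho I_d$ keeps the inversion nonsingular, and the multiplicative $r$ creates the spectral room that the concentration exploits.

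On the complementary rare events the sample covariance can collapse and its inverse can explode, so no contraction is available; this is the main obstacle. To dominate these intermittent surges I would build the Lyapunov weight $F$ from two ingredients. First, the concavity of the Kalman update (Lemma \ref{lem:Kalconcave}) together with Jensen's inequality (Lemma \ref{lem:corcoarse}) bound $\E_n\Kalman_n(\Chat^{\tau\rho}_{n+1})$ from above; second, the uniform observability of Assumption \ref{aspt:obcon}, through the contracting Riccati map $\Riccati_{k,k+m}$ of Lemma \ref{lem:boucontract} (via \cite{Bou93}), forces the ensemble covariance back toward the reference $\Rtilde_n$ over each window of length $m$. Combining these with the exponentially small probability of the bad events controls $\|C_n\Rtilde_n^{-1}\|$ and produces the covariance fidelity bound \eqref{eqn:cnbound}: any excess of $\|C_n\Rtilde_n^{-1}\|^m$ above $1$ occurs with a probability that taking $\bfD$ large makes smaller than $c$, giving $\limsup_n\E\max\{1,\|C_n\Rtilde_n^{-1}\|^m\}\le 1+c$. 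The super-exponential growth $F(X)\le D_F\exp(D_F\log^3\|X\|)$ reflects that each $m$-step window can multiply $\log\|C_n\|$ by a random factor whose contribution to $\E\|e_n\|$ must be absorbed through an iterated logarithmic-moment estimate.

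Finally I would assemble the good-event contraction and the bad-event control into an $m$-step recursion for the first-moment Lyapunov quantity $V_n=\sqrt{\|e_n\|^2_{C^\rho_n}+2m}$ weighted by a multiplicative factor that telescopes into $F(C_0)$, of the form $\E[\,\cdots V_{n+m}]\le r^{-m/6}\E[\,\cdots V_n]+(1+c)\sqrt d$; the subadditivity of the square root (with the $2m$ floor accommodating the observability window) is what converts the squared-error dissipation of the preceding steps and the $O(d)$ noise into this clean first-moment form, the exponent $r^{-m/6}$ being the square root of the per-block squared-error factor $r^{-m/3}$. Iterating the geometric recursion gives the stated rate $r^{-n/6}$, sums the additive terms into $M_n$ with $\limsup_n M_n\le(1+c)/(1-r^{-m/6})$, and yields the first displayed bound; the second then follows from $|e_n|^2\le\|C^\rho_n\|\,\|e_n\|^2_{C^\rho_n}$ together with $\|C^\rho_n\|\le D_R+\rho$, itself a consequence of \eqref{eqn:cnbound} and Assumption \ref{aspt:obcon}. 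I expect the genuinely delicate part to be reconciling the multiplicative amplification on bad sampling events, which forces $F$ to be super-exponential, with the clean constant $1+c$ demanded in the covariance bound.
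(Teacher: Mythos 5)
Your overall architecture coincides with the paper's: a Mahalanobis dissipation inequality driven by the Kalman update, the noncentral concentration result to make it effective with high probability, covariance comparison via Kalman concavity plus Jensen and the observability--Riccati bound, and a super-exponential Lyapunov weight iterated over blocks of length $m$, coupled to the error through a weighted first-moment recursion. However, two of your steps fail as written.

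First, the bound on $\E|e_n|$. You deduce $\|C^\rho_n\|\le D_R+\rho$ ``as a consequence of \eqref{eqn:cnbound}'', but \eqref{eqn:cnbound} is a $\limsup$-in-expectation statement, not an almost-sure bound; $\|C_n\|$ exceeds $D_R$ precisely on the bad sampling events whose possibility is the whole difficulty (that is what $\nu_n>1$ means), so this step is circular. Even a Cauchy--Schwarz repair would require a second moment of $\|e_n\|_{C^\rho_n}$, which your first-moment recursion does not supply. The correct route, and the one the paper takes, is pointwise: $C^\rho_n\preceq \nu_n(\Rtilde_n+\rho I)$ gives $|e_n|\le\sqrt{D_R+\rho}\,\sqrt{\nu_n}\,\|e_n\|_{C^\rho_n}$, and the random factor $\sqrt{\nu_n}$ is absorbed into the Lyapunov weight $\sqrt{\psi(\nu_n)}$ because $\psi(\nu)\ge\nu$. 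Your framework accommodates this fix, but the step you wrote is wrong.

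Second, your one-step contraction $\E_n[\unit_{\Gamma_n}\|e_{n+1}\|^2_{C^\rho_{n+1}}]\le r^{-\alpha}\|e_n\|^2_{C^\rho_n}+O(d)$ hides a coupling and leaves the bad event uncontrolled. The projection deflation --- the event that the $(p+1)$-st eigenvalue of $\Kalman_n(\Chat^{\tau\rho}_{n+1})$ exceeds $\rho$, so that $C^\rho_{n+1}$ genuinely loses mass relative to $\Kalman_n(\Chat^{\tau\rho}_{n+1})$ --- is not governed by the concentration event at step $n+1$ at all; it is governed by the current covariance ratio $\nu_n$ through Lemma \ref{lem:corfine}, Lidskii's theorem and Assumption \ref{aspt:lowdim}. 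Hence the true one-step factor is the random product $\chi_{n+1}\mu_{n+1}/r$ of Lemma \ref{lem:contractensemble}, and the error recursion cannot be closed without simultaneously recursing on $\nu_n$; this is exactly why the paper keeps the multipliers explicit rather than restricting to a good event. Relatedly, you never bound $\E[\unit_{\Gamma_n^c}\|e_{n+1}\|^2_{C^\rho_{n+1}}]$: on the bad event the inverse sample covariance explodes, and one needs the conditional moment bounds $\E_n\unit_{\calU}\mu^l$ supplied by Theorem \ref{thm:RMT} and Corollary \ref{cor:RMT} (used in the three-factor H\"older estimate of Lemma \ref{lem:mtimesmaha}). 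The Lyapunov weight on the covariance ratio alone cannot compensate for this, since $\mu_{n+1}$ is a sampling fluctuation, not a function of the covariances $C_k$.
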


\subsection{Exponential stability}
Another useful property implied by the previous analysis is that EnKF is exponentially stable. Let $\{X_0'^{(k)}\}=\{X_0^{(k)}+\Delta\}$ be a shift of the original initial ensemble. If both EnKF generate the same realization for $\xi^{(k)}_n$ and assimilate the same observation sequence $Y_n$, then it is straight forward to check their ensemble spread matrices remain the same, and the difference in their mean estimates is given by
\[
(\Xbar_n-\Xbar_n')= U_{n,0}(\Xbar_0-\Xbar_0'),\quad U_{n,m}=\prod_{k=m}^{n-1} (I-G_{k+1}H_k)A_k. 
\]
So if $\|U_{n,0}\|$ converges to zero exponentially fast, then so does the mean difference. In \cite{Bou93}, this is called exponential stability. 

\begin{cor}
\label{cor:expstable}
Under the conditions of  Theorem \ref{thm:weak}, suppose $\E F(C_0)<\infty$ and $K>\bfD p$,   the EnKF is exponentially stable 
\[
\limsup_{n\to \infty} \frac{1}{n} \log \E\left\|\prod_{k=0}^{n-1} (I-G_{k+1}H_k)A_k\right\|\leq -\frac{1}{6}\log r.
\]
\end{cor}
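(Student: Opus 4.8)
My starting point is the identity recorded just above the statement: when the shifted filter is driven by the same noise realizations $\xi^{(k)}_n$ and assimilates the same observations $Y_n$, the two ensembles keep identical spread matrices $S_n$, hence identical covariances $C_n$ and identical gains $G_{n+1}$, so the mean discrepancy obeys the \emph{noise-free} recursion $\delta_n:=\Xbar_n-\Xbar_n'=U_{n,0}\delta_0$ with $U_{n,0}=\Phi_{n-1}\cdots\Phi_0$, where $\Phi_k:=(I-G_{k+1}H_k)A_k$. The plan is therefore to rerun the Mahalanobis dissipation argument underlying Theorem \ref{thm:weak} on the homogeneous dynamics $\delta_n$, for which the additive noise contributions (the $2m$ inside the square root and the $M_n\sqrt d$ term, both noise-induced) drop out, and then to convert the resulting Mahalanobis decay into an operator-norm bound on $U_{n,0}$.

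The heart of the proof would be a matrix (rather than vectorwise) version of the dissipation \eqref{eqn:optdiss}. The random-matrix concentration behind Theorem \ref{thm:weak}, namely the estimate $\Chat_{n+1}^{-1}\preceq\Rhat_{n+1}^{-1}$ of Section \ref{sec:proof} made possible by the inflations and the effective dimensionality, supplies on a high-probability event a per-step inequality $\Phi_k^T(C_{k+1}^\rho)^{-1}\Phi_k\preceq\lambda_k(C_k^\rho)^{-1}$, whose accumulated factor contracts strictly over each observability window of length $m$ by Assumption \ref{aspt:obcon}. Telescoping over $k=0,\dots,n-1$ would then give the operator inequality
\[
U_{n,0}^T(C_n^\rho)^{-1}U_{n,0}\preceq\Pi_n\,(C_0^\rho)^{-1},\qquad \Pi_n=\prod_{k=0}^{n-1}\lambda_k,
\]
valid on the good event; on the complementary rare events I would absorb the per-step loss into the Lyapunov function $F$ exactly as in Theorem \ref{thm:weak}, so that the prefactor inherits the moment control $\E\big[F(C_0)\sqrt{\Pi_n}\big]\lesssim r^{-n/6}$ up to subexponential corrections.

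It then remains to pass from the Mahalanobis inequality to the Euclidean operator norm. Since $C_0^\rho\succeq\rho I_d$ we have $(C_0^\rho)^{-1}\preceq\rho^{-1}I_d$, while $C_n^\rho=C_n+\rho I_d$ gives $|w|^2\leq\|C_n^\rho\|\,w^T(C_n^\rho)^{-1}w$ for every $w$. Applying this with $w=U_{n,0}v$ and the displayed bound yields, for every unit vector $v$,
\[
|U_{n,0}v|^2\leq\|C_n^\rho\|\,(U_{n,0}v)^T(C_n^\rho)^{-1}(U_{n,0}v)\leq\rho^{-1}\|C_n^\rho\|\,\Pi_n.
\]
Taking the supremum over $v$ and then expectations gives $\E\|U_{n,0}\|\leq\rho^{-1/2}\E\big[\sqrt{\|C_n^\rho\|\,\Pi_n}\big]$; a Cauchy--Schwarz split isolates $\sqrt{\Pi_n}$, which carries the $r^{-n/6}$ decay, from $\sqrt{\|C_n^\rho\|}$, which is subexponential because $\|C_n^\rho\|\leq\rho+\|C_n\|$ is controlled in expectation through the covariance bound \eqref{eqn:cnbound}. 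Forming $\tfrac1n\log\E\|U_{n,0}\|$ and letting $n\to\infty$ annihilates every subexponential prefactor and leaves exactly $-\tfrac16\log r$.

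The main obstacle will be the randomness of the per-step factor $\lambda_k$: the dissipation holds only with high probability and fails on the rare sampling events where the ensemble covariance underestimates. Because the operator norm is a supremum over all directions simultaneously, I am forced to use the \emph{uniform} matrix inequality above rather than a direction-by-direction estimate, and I must then verify, via the Lyapunov weighting of Theorem \ref{thm:weak} together with the moment (Cauchy--Schwarz) bound, that $\E\big[F(C_0)\sqrt{\|C_n^\rho\|\,\Pi_n}\big]$ stays subexponential apart from the geometric factor $r^{-n/6}$. This interplay between the almost-sure telescoping and the probabilistic Lyapunov control is the delicate step; the remainder is bookkeeping already carried out for Theorem \ref{thm:weak}.
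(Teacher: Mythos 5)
Your architecture coincides with the paper's: the paper also iterates its per-step dissipation inequality \eqref{tmp:pin} (whose factor is $\chi_{k+1}\mu_{k+1}/r\leq \nu_{k+1}\mu_{k+1}/r$; note the paper reserves $\lambda$ for the forward concentration ratio, not this factor) to obtain
\[
U_{n,0}^T[C_n^\rho]^{-1}U_{n,0}\preceq\Big(\prod_{k=1}^n\nu_k\mu_k/r\Big)[C_0^\rho]^{-1},
\]
and it converts this to an operator-norm bound exactly as you do, via $\|C_n^\rho\|^{-1}U_{n,0}^TU_{n,0}\preceq U_{n,0}^T[C_n^\rho]^{-1}U_{n,0}$ and $\|[C_0^\rho]^{-1}\|\leq\rho^{-1}$. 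The genuine gap is your last step: the global Cauchy--Schwarz split $\E\sqrt{\|C_n^\rho\|\,\Pi_n}\leq\sqrt{\E\|C_n^\rho\|}\sqrt{\E\Pi_n}$, with $\Pi_n=\prod_{k=1}^n\nu_k\mu_k/r$. This requires the \emph{first} moment $\E\Pi_n$ to decay at rate $r^{-n/3}$, and neither the paper's lemmas nor your sketch provide that. What the Lyapunov iteration actually delivers, by conditioning block by block of length $m$, is decay of the \emph{half} moment coupled to the weight at the terminal time,
\[
\E_{nm}\sqrt{\psi(\nu_{(n+1)m})\textstyle\prod_{k=nm+1}^{(n+1)m}\nu_k\mu_k/r}\;\leq\; r^{-\frac m6}\sqrt{\psi(\nu_{nm})},
\]
hence $\E\sqrt{\psi(\nu_n)\Pi_n}\lesssim r^{-n/6}\,\E\sqrt{\psi(\nu_0)}$ (this is where $\E F(C_0)<\infty$ enters). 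Jensen's inequality goes the wrong way here: decay of $\E\sqrt{\Pi_n}$ does not imply decay of $\E\Pi_n$, and since $\mu_k$ has only exponential-type tails on the rare sampling events, controlling $\E\Pi_n$ would force you to redo Lemmas \ref{lem:mtimesmaha} and \ref{lem:phi} with squared per-step factors and a modified Lyapunov function. Relatedly, your intermediate claim $\E\big[F(C_0)\sqrt{\Pi_n}\big]\lesssim r^{-n/6}$ misplaces the weight: the Lyapunov factor must sit at the terminal time, $\psi(\nu_n)$, because it is what restarts the estimate at each block boundary; it is dropped only at the very end using $\psi\geq1$.

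The fix is to not split at all. Since $C_n\preceq\nu_n\Rtilde_n$ and $\|\Rtilde_n\|\leq D_R$ by Assumption \ref{aspt:obcon}, one has $\|C_n^\rho\|\leq(D_R+\rho)\nu_n$, and since $\nu_n\leq\psi(\nu_n)$,
\[
\E\sqrt{\|C_n^\rho\|\,\Pi_n}\;\leq\;\sqrt{D_R+\rho}\;\E\sqrt{\psi(\nu_n)\,\Pi_n},
\]
which is precisely the quantity the block iteration above controls; the off-grid times $nm+k$, $0<k<m$, are handled by bounding $\E\sqrt{\psi(\nu_k)}$ with Lemma \ref{lem:psikk}. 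With this single replacement — keeping the covariance-norm factor and the product inside one expectation rather than separating them — your argument becomes the paper's proof, and also corrects the attribution of the contraction: the rate $r^{-m/6}$ comes from the inflation factor $1/r$ per step combined with the Lyapunov control of $\nu_n$, while observability (through Lemma \ref{lem:boucontract}) is what keeps $\nu_n$ stable, not what produces the contraction.
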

The proof is located after the proof of Theorem \ref{thm:strong}. 

\subsection{Filter accuracy}
When system \eqref{sys:random} is observed frequently, the system noises are of close to zero scale. Intuitively, the filter error should be close to zero as well. Such property has recently been investigated by \cite{LSS14} for other observers, and called filter accuracy. While filter accuracy may come easily for the Kalman filter \eqref{sys:optimal}, it is highly nontrivial for EnKF. Our framework reveals that filter accuracy of EnKF can be obtained by filter accuracy of the reference Kalman filter.  
\begin{cor}
\label{cor:accuracy}
Under the same conditions of Theorem \ref{thm:weak}, there exists  a  constant  $C$,  such that when $K>\bfD p$, for any $\epsilon>0$ there is an EnKF filter with ensemble size $K$ for the following system 
\begin{equation}
\label{eqn:epsilonnoise}
\begin{gathered}
X_{n+1}=A_n X_n+B_n+\epsilon \xi_{n+1}\\
Y_{n+1}=H_n X_{n+1}+\epsilon \zeta_{n+1},
\end{gathered}
\end{equation}
such that with the same function and sequence in Theorem \ref{thm:weak}:
\[
\E |e_n|\leq \epsilon r^{-\frac{n}{6}} \sqrt{D_R+\rho}\E F(\epsilon^{-2}C_0)\sqrt{\|e_0\|^2_{C_0^\rho}+2m}+\epsilon \sqrt{(D_R+\rho)d} M_n .
\] 
\end{cor}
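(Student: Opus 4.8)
The plan is to prove Corollary \ref{cor:accuracy} by a deterministic change of variables that turns the small-noise system \eqref{eqn:epsilonnoise} into an instance of \eqref{sys:random} to which Theorem \ref{thm:weak} applies verbatim, and then to track how every quantity rescales. Fix (condition on) a realization of the coefficients and let $\nu_n$ be the noise-free reference trajectory $\nu_{n+1}=A_n\nu_n+B_n$, with $\nu_0$ an arbitrary deterministic vector. Define the rescaled signal and observation $\check X_n:=\epsilon^{-1}(X_n-\nu_n)$ and $\check Y_{n+1}:=\epsilon^{-1}(Y_{n+1}-H_n\nu_{n+1})$. Substituting into \eqref{eqn:epsilonnoise} shows that $(\check X_n,\check Y_n)$ solves exactly the system \eqref{sys:random} with the same coefficients $(A_n,H_n,\Sigma_n)$, zero drift, and unit observation noise $\zeta_{n+1}\sim\mathcal N(0,I_q)$; the factors of $\epsilon$ cancel precisely because the process and observation noises in \eqref{eqn:epsilonnoise} are both scaled by $\epsilon$. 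Crucially, the Riccati recursion \eqref{sys:optimalr}, the reference covariance $\Rtilde_n$, and hence Assumptions \ref{aspt:lowdim} and \ref{aspt:obcon} depend only on $(A_n,H_n,\Sigma_n)$ and not on the drift, so the rescaled system has the same intrinsic dimension $p$ and the same constants $D_R,\bfD$, and Theorem \ref{thm:weak} applies to it with the very same $F$ and $M_n$.

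Next I would design the EnKF for \eqref{eqn:epsilonnoise} to be the affine image, under $X=\epsilon\check X+\nu$, of the standard Algorithm \ref{alg:EnKF} run on the rescaled system with threshold $\rho$; equivalently, it is Algorithm \ref{alg:EnKF} applied to \eqref{eqn:epsilonnoise} with additive threshold $\epsilon^2\rho$ and observation-noise covariance $\epsilon^2 I_q$, started from $X_0^{(k)}=\epsilon\check X_0^{(k)}+\nu_0$. One then checks by induction that the affine map is equivariant through every line of the algorithm: ensemble means and spreads carry a factor $\epsilon$, so $\Shat_{n+1}$ and the spread $S_{n+1}$ scale by $\epsilon$, while the covariances $\Chat_{n+1},C_{n+1}$, the instability matrix $\Sigma^+_n$, and the inflation $\rho I_d$ scale by $\epsilon^2$; the gain $G_{n+1}=\Gain_n(\Chat^{\tau\rho}_{n+1})$ is invariant, because the common factor $\epsilon^2$ in $\Chat^{\epsilon^2\tau\rho}_{n+1}$ cancels against the $\epsilon^2 I_q$ in the inverted matrix; and the operator $\Kalman_n$ scales homogeneously while the spectral projection $\bfP_{n+1}$ is unchanged, since eigenvectors are invariant under multiplication by the positive scalar $\epsilon^2$. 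Consequently $\Xbar_n=\epsilon\check{\overline X}_n+\nu_n$, and the filter errors satisfy $e_n\stackrel{d}{=}\epsilon\check e_n$, whence $\E|e_n|=\epsilon\,\E|\check e_n|$.

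It then remains to insert the bound of Theorem \ref{thm:weak} for $\check e_n$ and translate the initial data back. Since the rescaled filter is initialized with covariance $\check C_0=\epsilon^{-2}C_0$, we have $F(\check C_0)=F(\epsilon^{-2}C_0)$, and a one-line computation gives $\|\check e_0\|^2_{\check C_0^\rho}=\epsilon^{-2}e_0^T(\epsilon^{-2}C_0+\rho I_d)^{-1}e_0=e_0^T(C_0+\epsilon^2\rho I_d)^{-1}e_0$, i.e. the inflated initial Mahalanobis error of the accuracy filter, whose additive threshold is $\epsilon^2\rho$; this is the quantity denoted $\|e_0\|^2_{C_0^\rho}$ in the statement. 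Multiplying the bound of Theorem \ref{thm:weak} for $\E|\check e_n|$ by $\epsilon$ and substituting yields exactly the claimed estimate, with $\sqrt{D_R+\rho}$ and $M_n$ unchanged because $D_R$ and $M_n$ belong to the rescaled system, which shares them with the original. Taking expectations over the coefficient realizations at the end removes the conditioning.

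The main obstacle is the bookkeeping in the second step: one must verify carefully that each operation of Algorithm \ref{alg:EnKF} is equivariant under the affine rescaling, keeping the homogeneity degrees ($\epsilon^1$ for means and spreads, $\epsilon^2$ for covariances) consistent throughout. The only genuinely delicate point is the gain invariance, which forces the \emph{simultaneous} rescaling of the additive inflation to $\epsilon^2\rho$ and of the observation-noise level to $\epsilon^2 I_q$; this is precisely the statement that an accuracy filter must track the $O(\epsilon^2)$ signal covariance rather than reuse the fixed threshold $\rho$. Once this equivariance is established, the remainder is a direct substitution into Theorem \ref{thm:weak}.
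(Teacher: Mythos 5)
Your proposal is correct and takes essentially the same route as the paper: the paper's own proof likewise rescales the reference covariance to $\epsilon^2\Rtilde_n$, runs Algorithm \ref{alg:EnKF} with parameters $(K,p,r,\epsilon^2\rho,\tau)$, notes that Assumptions \ref{aspt:lowdim} and \ref{aspt:obcon} are preserved under this scaling, and obtains $F(\epsilon^{-2}C_0)$ because $F$ depends on $C_0$ only through $\|C_0\Rtilde_0^{-1}\|$. Your explicit change of variables to the unit-noise system and the step-by-step equivariance check of the algorithm (scaling of spreads by $\epsilon$, covariances by $\epsilon^2$, invariance of the gain and of the spectral projection) simply supply the bookkeeping that the paper's brief proof leaves to the reader.
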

\begin{proof}
This is a straightforward from Theorem \ref{thm:weak}, since if $\Rtilde_n$ is a reference Kalman filter covariance for system \eqref{sys:random}, then $\epsilon^2\Rtilde_n$ is a reference Kalman filter covariance for system \eqref{eqn:epsilonnoise}, which has spectral norm bounded by $\epsilon^2 D_R$. So if one applies Algorithm \ref{alg:EnKF} with rescaled parameters $(K, p,r, \epsilon^2\rho, \tau)$, one can check both Assumptions \ref{aspt:lowdim} and \ref{aspt:obcon} hold. The $\epsilon^{-2}$ appears before $C_0$, because in fact $F(C_0)=f(\|C_0\Rtilde_0^{-1}\|)$ for some function $f$, which will be clear in Theorem \ref{thm:strong}. 
\end{proof}

\section{A simple application and example}
\label{sec:example}
When applying our results to a concrete problem in the form of \eqref{sys:random}, we need to compute the stationary filter covariance $\Rtilde_n$, as it plays a vital role in the low effective dimension Assumption \ref{aspt:lowdim}, and it controls the ensemble covariance $C^\rho_n$. In practice, this requires nontrivial numerical computation. One the other hand, there is a huge literature on Kalman filters, so there are various ways to simplify the computations. \cite{MT16uq} has discussed a few such strategies,  including unfiltered covariance, benchmark principle, and comparison principle. We will not reiterate these strategies in this paper, but instead apply some of them to a concrete simple turbulence problem \cite{MH12,MT16uq}. 

\subsection{Linearized stochastic turbulence in Fourier domain}
\label{sec:linearflow}
Consider the following stochastic partial differential equation \cite{HM08non,MH12}
\begin{equation}
\label{sys:SPDE}
\partial_t u(x,t)=\Omega(\partial_x) u(x,t)-\gamma(\partial_x) u(x,t)+F(x,t)+dW(x,t).
\end{equation}
For the simplicity of discussion,  the underlying space is assumed to be an one dimensional torus $\mathbb{T}=[0,2\pi]$, while generalization to higher dimensions is quite straight forward. The terms in \eqref{sys:SPDE} have the following physical interpretations: 
\begin{enumerate}[1)]
\item $\Omega$ is an odd polynomial of $\partial_x$. This term usually arises from the Coriolis effect from  earth's rotation, or the advection by another turbulent flow. Suppose $\Omega(\partial_x)e^{\rmi k x}=\rmi \omega_k e^{\rmi k x}$.
\item $\gamma$ is a positive and even polynomial of $\partial_x$. This term models the general diffusion and damping of turbulences. Suppose $\gamma(\partial_x)e^{\rmi kx}=\gamma_k e^{\rmi k x}$.
\item $F(x,t)$ is a deterministic forcing and $W(x,t)$ is a stochastic forcing. 
\end{enumerate}
One way to discretize \eqref{sys:SPDE} in both time and space, is to consider the Galerkin truncation of $u$ at time $nh$ with a fixed interval $h>0$. Let $X_n$ be a $2J+1$ dimensional vector, with its $k$-th component being the $k$-th Fourier component of $u(\,\cdot\, , nh)$, in other words,
\[
u(x,nh)=[X_n]_0+\sum_{1\leq k\leq J}2[X_n]_k\cos(kx) +2[X_n]_{-k}\sin(kx).
\]
Suppose both the deterministic and the stochastic forcing admit a Fourier decomposition:
\[
F(x,t)=2\sum_{1\leq k\leq J} f_k\cos(kx)+f_{-k}\sin(kx),\quad W(x,t)=2\sum_{1\leq k\leq J} \sigma^u_k (W_k(t)\cos(kx)+W_{-k}(t)\sin(kx)) .
\]
Here $W_{\pm k}(t)$ are  standard Wiener processes.
 
A time discretization of \eqref{sys:SPDE} in the Fourier domain yields the system coefficients for the system vector $X_n$. The details are in \cite{MT16uq}, and the results are presented as follows. $A_n=A$ is diagonal with $2\times 2$ sub-blocks, and $\Sigma_n=\Sigma$ is diagonal. Their entries with $B_n$ are:
\begin{equation}
\label{sys:turb}
\begin{gathered}
\left[A\right]_{\{k,-k\}^2}=\exp(-\gamma_k h)\begin{bmatrix} \cos(\omega_k h) &\sin(\omega_k h)\\
-\sin(\omega_k h) & \cos(\omega_k h)
\end{bmatrix},\quad [B]_k=f_k(nh)h,\\
[\Sigma]_{k,k}=\frac{(\sigma_k^u)^2}{2}\int_{nh}^{(n+1)h} \exp(-2\gamma_k s)ds=\frac{1}{2} E_k^u(1-\exp(-2\gamma_k h)).
\end{gathered}
\end{equation}
$E_k^u=\frac{1}{2\gamma_k}(\sigma_k^u)^2$ stands for the  stochastic energy of the $k$-th Fourier mode, and also the sum of stochastic energy of $[X_n]_k$ and $[X_n]_{-k}$.

In practice, the damping often grows and the energy decays like polynomials of the wavenumber $|k|$
\begin{equation}
\label{eqn:physic}
 \gamma_k=\gamma_0+\nu |k|^\alpha,\quad E_k^u=E_0|k|^{-\beta},\quad \alpha> 0,\beta\geq 0.
\end{equation}
To show that our framework is directly computable, we will also consider the following specific set of physical parameters with a Kolmogorov energy spectrum used in \cite{MG13}:
\begin{equation}
\label{eqn:physics}
\alpha=2,\quad \beta=\frac{5}{3},\quad r=1.1,\quad \tau=0.6, \quad h=0.5,\quad\gamma_0=\nu=0.01,\quad E_0=1.
\end{equation}

\subsection{Reference spectral projection}
In order to verify Assumption \ref{aspt:lowdim}, we need to estimate the stationary Kalman covariance $\Rtilde_n$. The unfiltered equilibrium covariance $V'_n$ of $X'_n$ is a crude upper bound for $\Rtilde_n$, as the Kalman filter has the minimum error covariance, while $V'_n$ is the error covariance made by estimating $X'_n$ using its equilibrium mean. Although this is a crude estimate, it works for any choice of observation, and it is easy to compute, since $V'_{n+1}=r^2A_nV'_nA_n^T+\Sigma_n'$. If the system is time homogeneous, $V'_n$ will be a constant $V'$.  $V'$ is an upper bound of $\Rtilde_n$.  \cite{MT16uq} also applies this idea for reduced filtering error analysis. 

In particular for \eqref{sys:turb}, $A_nA_n^T+\Sigma_n$ is a diagonal matrix with entries
\[
[\rho A_nA_n^T+\Sigma_n]_{k,k}=\rho\exp(-2\gamma_k h)+\tfrac{1}{2}E_k^u(1-\exp(-2\gamma_k h)). 
\] 
If $[\rho A_nA_n^T+\Sigma_n]_{k,k}\geq \tau \rho/r$, this mode should be included by the instability covariance $\Sigma_n^+$. For the other Fourier modes, one can verify that $V'$ is a constant diagonal matrix with entries
\[
v'_k=r^2\exp(-2\gamma_k h)v'_k+r^2[\Sigma]_{k,k}+r^2\tau\rho\quad\Rightarrow\quad v'_k=\frac{r^2([\Sigma]_{k,k}+\tau\rho)}{1-r^2\exp(-2\gamma_k h)}.
\]
Since $\Rtilde_n\preceq V'$, by Lidskii's theorem 6.10 of \cite{Kat82}, it suffices to show $V'$ has at most $p$ eigenvalues above $\rho$. In summary, the following must hold
\begin{equation}
\label{eqn:range1}
\rho\geq \max\left\{\frac{r^2[\Sigma]_{k,k}}{1-r^2\tau-r^2\exp(-2\gamma_k h)},\frac{r\rho}{\tau}\exp(-2\gamma_k h)+\frac{r}{2}[\Sigma]_{k,k}\right\}
\end{equation}
except for at most $p$ different $k$. The quantity is easily computable, for example with the physical parameters \eqref{eqn:physics}, $p$ can be set as $15$, or $30$ if the negative wavenumbers are also considered, and $\rho=0.04$. Such small $p$ makes $K\sim 100$ comparatively large, and it is independent of the Galerkin truncation range $J$.

\subsection{Regularly spaced observations}
Observations can significantly decrease the Kalman filter covariance $\Rtilde_n$, so they help keeping the intrinsic filtering dimension low. Here we consider only a simple but useful scenario where the observations of $u(x,t)$ are made  at a regularly spaced network, $x_k=\frac{2\pi k}{2J+1}, k=0,\ldots,2J$, and have an Gaussian observation error $\mathcal{N}(0,\sigma^o)$ at each location. \cite{MH12, MT16uq} have shown that this is equivalent to a direct observation, with $H_n=I_{2J+1}\sqrt{\frac{2J+1}{\sigma^o}}$. Then the reference covariance matrix $\Rtilde_n$ is a constant diagonal  matrix with entries $[\Rtilde]_{k,k}=r_k$. When $k$ is not a mode of instability, it solves a Riccati equation
\[
r_k=\frac{\sigma^o\hat{r}_k}{\sigma^o+(2J+1)\hat{r}_k},\quad\hat{r}_k=r^2 r_k \exp(-2\gamma_k h)+r^2[\Sigma]_{k,k}+ \tau\rho .
\]
In summary, the following must hold
\begin{equation}
\label{eqn:range2}
\rho\geq \max\left\{r_k,\frac{r\rho}{\tau}\exp(-2\gamma_k h)+\frac{r}{2\tau}E_k^u(1-\exp(-2\gamma_k h))\right\}
\end{equation}
except for at most $p$ different $k$. The quantity is easily computable, for example with the physical parameters \eqref{eqn:physics} while $J=50$ and $\sigma^o=10$, $p$ can be set as $6$ and $\rho=0.04$. Apparently, the existence of observations makes the effective dimension much smaller than the one estimated by the unfiltered covariance.

\subsection{Intermittent dynamical regime} 
One challenge that practical filters often face is that the dynamical coefficient $A_n$ is not always stable with spectral norm less than $1$. This is usually caused by the large scale chaotic dynamical regime transitions. One simple way of modeling this phenomenon in \eqref{sys:turb}, is letting $A_n$ be a Markov jump process \cite{MH12}, while maintaining the sub-block structure: $[A_n]_{\{k,-k\}^2}=[\lambda_{n}]_k[A]_{\{k,-k\}^2}.$ Here $\lambda_n$ is a Markov chain taking values in $\reals^{K+1}$. Then the system random instability can be modeled as the random fluctuation of $[\lambda_n]_k$, so that occasionally $\|[A_n]_{\{k,-k\}^2}\|>1$ for some $k$. 

Our framework is applicable to such scenarios in general, as we allow random system coefficients. The main difficulty would be the computation of the $\Rtilde_n$ and the verification of Assumption \ref{aspt:lowdim}, which in general require numerical methods. Our framework can also be generalized, so instead of  a constant ambient space  uncertainty level $\rho$, one can uses a stochastic sequence $\rho_n$. Section 5 of \cite{MT16uq} discuss this generalization. In this paper, we do not intend to generalize our framework to that level, as the analysis involved is already rather complicated.

On the other hand,  in many situations, the dynamical instability only occurs on the a small subset $I$ of Fourier modes. This is because when the wavenumbers are high,  the dissipation force is much stronger than the random environmental forcing. So for $k\in I^c$, $[A_n]_{\{k,-k\}}$ could remain of constant value like in \eqref{sys:turb}. See chapter 4 of \cite{MH12} for such an example, where the instability occurs only at a few mode with wavenumber less than $5$.  Then it suffices to include the subspace spanned by modes in $I$ in the instability subspace. This can be done with a slightly larger $p$. Since all verification discussed above, \eqref{eqn:range1} and \eqref{eqn:range2}, concern only of modes outside the instability subspace, they and also our framework still remain valid.

\section{Rigorous analysis for EnKF}
\label{sec:proof}
This section provides the main ingredients for the proof of Theorem \ref{thm:weak}. This is accomplished by a RMT result for the forecast covariance matrix, a Mahalanobis dissipation mechanism, a Kalman covariance comparison principle, and a Lyapunov function that connects the previous three. The detailed proof of the RMT result is delayed to Section \ref{sec:RMT}, as it is rather technical, long, and indirectly related to our main problem.  

There will be two filterations in our discussion. The first one contains all the information of system coefficients up to time $n$, and the initial covariance for the filters:
\[
\mathcal{F}^c_n=\sigma\{A_{k}, B_{k}, \Sigma_{k}, H_{k}, \sigma_{k}, k\leq n\}\vee \sigma\{R_0,  C_0, \Rtilde_0\}.
\]
 Noticeably,  the Kalman filters  have their covariance inside this filteration:
 $\sigma\{R_k, \Rtilde_k, k\leq n+1\}\subset\mathcal{F}^c_n.$  
We will use $\mathcal{F}^c=\vee_{n\geq 0} \mathcal{F}^c_n$ to denote all the information regarding the system coefficients through the entire time line. 

The second filteration contains all the information of system \eqref{sys:random} up to time $n$, $
\mathcal{F}_n=\mathcal{F}^c_n\vee \sigma\{\zeta_l, \xi_l, \zeta^{(k)}_l,\xi^{(k)}_l,l\leq n, k\leq K \}.$ 
We use $\E_n Z$, $\E_{\mathcal{F}}$ to denote the conditional expectation of a random variable $Z$ with respect to $\mathcal{F}_n$ or another fixed $\sigma$-field $\mathcal{F}$ respectively. $\Prob_n$ and $\Prob_{\mathcal{F}}$ denote the associated conditional probability.

\subsection{Concentration of samples}
\label{sec:sample}
The first mechanism we will rely on is that with a low effective dimension, the ensemble forecast covariance  $\Chat_{n+1}$ concentrates around its average. To describe this phenomenon, define the following sequences: 
\begin{equation}
\label{eqn:lambda}
\lambda_{n+1}=\inf\{\lambda\geq 1, \Chat^{\tau\rho}_{n+1}\preceq \lambda [rA_nC_nA_n^T+r\Sigma^+_n+r\tau\rho I]\}
\end{equation}
\begin{equation}
\label{eqn:mu}
\mu_{n+1}=\inf\{\mu\geq 1, [\Chat^{\tau\rho}_{n+1}]^{-1}\preceq  \mu [rA_nC_nA_n^T+r\Sigma^+_n +\tau\rho I]^{-1} \}. 
\end{equation}
So essentially the random matrix is sandwiched by its expected value multiplied by these ratios:
\[
\mu^{-1}_{n+1} [rA_nC_nA_n^T+r\Sigma^+_n +\tau\rho I]\preceq \Chat^{\tau\rho}_{n+1}\preceq \lambda_{n+1} [rA_nC_nA_n^T+r\Sigma^+_n+r\tau\rho I]. 
\]
Theorem \ref{thm:RMT} below indicates that $\lambda_{n}$ and $\mu_n$ are mostly of value close to one, as long as the sample size $K$ surpasses a constant multiple of the effective dimension $p$, specifically:
\begin{cor}
\label{cor:RMT}
Under the same conditions of Theorem \ref{thm:weak}, denote the following rare events
\[
\calU^\lambda_{n+1}=\{\lambda_{n+1}\geq \sqrt{r}\},\quad \calU^\mu_{n+1}=\{\mu_{n+1}\geq \sqrt{r}\}.
\]
There are constants $c_r,D_r>0$, such that 
\[
\Prob_n (\calU^\lambda_{n+1}\cup\calU^\mu_{n+1} )\leq \log( \|C_n\|+1)\exp(D_r p-c_rK).
\]
The tail of $\lambda_{n+1}$ can be bounded by an exponential one,
\[
\Prob_n(\lambda_{n+1}>8+t)\leq \exp(-c_r Kt ).
\]
Inside the rare event, for any fixed $M$, there is a constant $D_M$ such that the following bound  holds with $l\leq M$ 
\[
\E_n \unit_{\calU^\mu_{n+1}}\mu^l_{n+1}\leq D_{M}\|\Sigma_n^+/\rho\|^l\log (\|C_n\|+1)\exp(D_r p-c_rK).
\]
Recall that $\|\,\cdot\,\|$ denotes the $l_2$ operator norm of matrices. 
\end{cor}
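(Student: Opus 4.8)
The plan is to obtain Corollary \ref{cor:RMT} as a specialization of the noncentral concentration result Theorem \ref{thm:RMT}, after first collapsing the ambient $d$-dimensional spectral comparison onto a subspace of dimension $O(p)$. Conditioned on $\mathcal{F}_n$, the forecast spread $\Shat_{n+1}=\sqrt r(A_nS_n+\Xi)$, with $\Xi=[\Delta\xi^{(1)}_{n+1},\dots,\Delta\xi^{(K)}_{n+1}]$, is a noncentral Gaussian ensemble: the columns carry deterministic centers $\sqrt r A_n\Delta X^{(k)}_n$ and Gaussian fluctuations (independent modulo the mean-zero constraint) of covariance $r\Sigma^+_n$. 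The decisive structural fact is that the spectral projection in the assimilation step forces $C_n=\bfP_n(\cdots)\bfP_n$ to have rank at most $p$, while Assumption \ref{aspt:lowdim} forces $\Sigma^+_n$ to have rank at most $p$. Hence both the deterministic part $rA_nC_nA_n^T$ and the fluctuating part of $\Chat_{n+1}$ live on a single subspace $V=\mathrm{range}(A_nS_n)+\mathrm{range}(\Sigma^+_n)$ with $\dim V\le 2p$.

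First I would reduce to $V$. On $V^\perp$ the sample part of $\Chat_{n+1}$ vanishes, so $\Chat^{\tau\rho}_{n+1}=\tau\rho I$ there, and the comparison matrices in \eqref{eqn:lambda} and \eqref{eqn:mu} equal $r\tau\rho I$ and $\tau\rho I$ respectively on $V^\perp$; the defining inequalities therefore hold automatically on $V^\perp$ and impose no constraint. Consequently both $\lambda_{n+1}$ and $\mu_{n+1}$ are determined entirely by the restriction of $\Chat^{\tau\rho}_{n+1}$ to $V$, converting a $d$-dimensional spectral comparison into a $\dim V\le 2p$ one. This is exactly the step where the low effective dimension enters.

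Next I would invoke Theorem \ref{thm:RMT} on $V$. Matching parameters — sample size $K$, effective dimension $\le 2p$, fluctuation covariance $r\Sigma^+_n$, centers from $\sqrt r A_nS_n$, and the multiplicative gap $\sqrt r>1$ supplied by the inflation — and using the hypothesis $K>\bfD p$, the theorem yields that the forecast covariance is sandwiched between $\frac1{\sqrt r}$ and $\sqrt r$ times its conditional mean outside an event of probability $\log(\|C_n\|+1)\exp(D_rp-c_rK)$, which is precisely the union bound on $\calU^\lambda_{n+1}\cup\calU^\mu_{n+1}$. Here $\exp(D_rp)$ is the cardinality of an $\epsilon$-net of the unit sphere of $V$, $\exp(-c_rK)$ is the per-direction Gaussian concentration of a quadratic form in $K$ samples, and $\log(\|C_n\|+1)$ tracks the dynamic range of the noncentral spectrum $rA_nC_nA_n^T$ that must be swept by dyadic scales. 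The sub-exponential tail $\Prob_n(\lambda_{n+1}>8+t)\le\exp(-c_rKt)$ is the upper-deviation half of the same estimate, namely a bound on $\|\Chat^{\tau\rho}_{n+1}[\E_n\Chat^{\tau\rho}_{n+1}]^{-1}\|$, which concentrates because the largest singular value of a Gaussian ensemble has a Gaussian upper tail.

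For the moment bound I would combine a deterministic a priori estimate with this tail. Since $\Chat^{\tau\rho}_{n+1}\succeq\tau\rho I$ always, one has $\mu_{n+1}\le\|\E_n\Chat^{\tau\rho}_{n+1}\|/(\tau\rho)$; but the propagated part $rA_nC_nA_n^T$ is \emph{deterministic} inside $\Chat_{n+1}$ and so cannot be underestimated, so only the freshly sampled noise directions can inflate $\mu_{n+1}$, with relevant scale $\|\Sigma^+_n\|/(\tau\rho)\sim\|\Sigma^+_n/\rho\|$ rather than $\|C_n\|/\rho$. Integrating the $\mu$-tail from Theorem \ref{thm:RMT} against this scale, and restricting to $\calU^\mu_{n+1}$ so that the probability $\log(\|C_n\|+1)\exp(D_rp-c_rK)$ multiplies the $l$-th moment, delivers $\E_n\unit_{\calU^\mu_{n+1}}\mu^l_{n+1}\le D_M\|\Sigma^+_n/\rho\|^l\log(\|C_n\|+1)\exp(D_rp-c_rK)$. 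The hard part is precisely this separation: showing that the deterministic propagated spread contributes nothing to underestimation while cleanly controlling the cross terms $A_nS_n\Xi^T$ that couple the centers to the noise — this noncentral coupling is the very reason Theorem \ref{thm:RMT} is required in place of a textbook central Wishart bound.
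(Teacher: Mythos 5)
Your overall strategy is the paper's: Corollary \ref{cor:RMT} is obtained by specializing Theorem \ref{thm:RMT} with $a_k=\sqrt{r}A_n\Delta X_n^{(k)}$, $\xi_k=\sqrt{r}\xi^{(k)}_{n+1}$, and the two rank facts you cite (the spectral projection forces $\mathrm{rank}(C_n)\leq p$, Assumption \ref{aspt:lowdim} forces $\mathrm{rank}(\Sigma_n^+)\leq p$) are exactly what makes the theorem applicable. Your preliminary reduction to the subspace $V$ is valid but redundant: it is Step 1 inside the proof of Theorem \ref{thm:RMT}, and the theorem's hypotheses only ask for the rank bounds. Two details you treat as automatic "parameter matching" do require care, though both are easily repaired: (i) $\lambda_{n+1}$ compares $\Chat^{\tau\rho}_{n+1}$ against $rA_nC_nA_n^T+r\Sigma_n^++r\tau\rho I$ while $\mu_{n+1}$ compares against $rA_nC_nA_n^T+r\Sigma_n^++\tau\rho I$, whereas Theorem \ref{thm:RMT} uses one common regularization $\rho$ in both of its quantities; the paper therefore applies the theorem with $\delta=\frac15(\sqrt{r}-1)$ and two separate choices $\rho_\lambda=(r-1)\rho\tau$, $\rho_\mu=\rho\tau$, and checks that the resulting good events are contained in $(\calU^\lambda_{n+1})^c$ and $(\calU^\mu_{n+1})^c$; (ii) the theorem's bounds carry $\log\mathcal{C}+1$ with $\mathcal{C}$ the condition number of $rA_nC_nA_n^T+sI$, and converting this into $\log(\|C_n\|+1)$ uses $\|A_n\|\leq D_A$ from Assumption \ref{aspt:obcon}.

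The genuine gap is your justification of the moment bound. The assertion that the propagated part $rA_nC_nA_n^T$ "cannot be underestimated" because it is deterministic is false: writing $\Xi=[\Delta\xi^{(1)}_{n+1},\ldots,\Delta\xi^{(K)}_{n+1}]$, one has $\Chat_{n+1}=rA_nC_nA_n^T+\tfrac{r}{K-1}\bigl(A_nS_n\Xi^T+\Xi S_n^TA_n^T\bigr)+\tfrac{r}{K-1}\Xi\Xi^T$, and the cross terms are mean-zero Gaussians whose standard deviation in a direction $v$ is of order $|S_n^TA_n^Tv|\sqrt{v^T\Sigma_n^+v}/(K-1)$; hence $v^T\Chat_{n+1}v$ can fall well below $r\,v^TA_nC_nA_n^Tv$ precisely in directions where the propagated spread is large and which overlap the noise range --- this is merely exponentially unlikely, not impossible. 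Consequently your proposed route (deterministic a priori bound plus tail integration) cannot produce the stated scale: the only deterministic bound available is $\mu_{n+1}\leq\|rA_nC_nA_n^T+r\Sigma_n^++\tau\rho I\|/(\tau\rho)$, which scales with $\|C_n\|/\rho$, so multiplying by $\Prob_n(\calU^\mu_{n+1})$ yields a factor $\|C_n\|^l$ rather than $\|\Sigma_n^+/\rho\|^l$. The distinction is essential downstream: under Assumption \ref{aspt:obcon} the factor $\|\Sigma_n^+/\rho\|^l$ is a constant and gets absorbed into $D_m$ in Lemma \ref{lem:mtimesmaha}, whereas a $\|C_n\|^l$ factor would destroy the Lyapunov estimates. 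The correct mechanism lives inside the proof of Theorem \ref{thm:RMT}: at each net point $u_i$ the trivial bound $U_i\sim\rho^{-1}\|\Sigma\|\bigl((K-1)+|S'u_i|^2\bigr)$ grows only polynomially in $|S'u_i|^2$, while the rare-event probability there decays like $\exp(-c_\delta K-c_\delta|S'u_i|^2)$, and this exponential-beats-polynomial balance over the net is what leaves the $\rho^{-l}\|\Sigma\|^l$ factor. So the fix is simply to drop your heuristic and invoke the second bullet of Theorem \ref{thm:RMT} directly with $\Sigma=r\Sigma_n^+$ and $\rho=\rho_\mu=\tau\rho$, as the paper does.
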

\begin{proof}
We will apply Theorem \ref{thm:RMT} with 
 \[
 a_k=\sqrt{r}  A_n\Delta X_{n}^{(k)},\quad \xi_k=\sqrt{r}\xi^{(k)}_{n+1}.
 \] 
 so $Z=\Chat_{n+1}, \Sigma=r\Sigma_n^+, C=rA_nC_nA_n, D=C+r\Sigma^+_n$. We will  consider $\delta=\frac15(\sqrt{r}-1)$ and  two different $\rho$-s: $\rho_\lambda=(r-1)\rho\tau$ and $\rho_\mu=\rho\tau$. Theorem \ref{thm:RMT} is applicable here,  because after the projection $\bfP_n$, $\{a_1,\ldots, a_K\}$ spans a subspace of dimension at most $p$.  The union of the  rare events considered in this theorem is included by the one of Theorem \ref{thm:RMT}, since
 \[
 (\calU^\lambda_{n+1})^c=\{\Chat^{\tau\rho}_{n+1}\preceq (1+5\delta)(D+\rho_\lambda I)\}\supseteq 
 \{\Chat_{n+1}\preceq (1+5\delta)(D+r\tau \rho I)\},
 \]
 \[
 ( \calU^\mu_{n+1})^c=\{[\Chat^{\tau\rho}_{n+1}]^{-1}\preceq (1+5\delta)(D+\rho_\mu I)^{-1}\}.
 \]
 Notice that the condition number of $C+s I_d$ is $\|rs^{-1}A_nC_nA_n\|+1$. Also notice that  $\log (x+1)\leq \log x+1$ for all $x>0$, so with $\|A_n\|\leq M_A$, there is  a constant $D_{\rho,A,r}\geq 1$
  \[
  \log \|s^{-1}rA_n C_n A_n^T+I\| \leq \log (s^{-1}rM_A^2\|C_n\|+1)\leq D_{\rho,A,r}(\log\|C_n\|+1) 
  \]  
for both $s=(r-1)\rho\tau$ and $s=\rho\tau$. Therefore according to Theorem \ref{thm:RMT}, there are constants $D_\delta, c_\delta, D_M$ such that 
 \[
 \Prob_n(\calU^\lambda_{n+1}\cup\calU^\mu_{n+1})\leq  \exp(\log 4D_{\rho,A,r}+D_\delta p-c_\delta K)(1+\log \|C_n\|),\quad  \Prob_n(\lambda_n>8+t)<\exp(-c_\delta Kt),
 \]
 \[ 
 \E_n \unit_{\calU_{n+1}^\mu}\mu^l_n\leq \|\Sigma_n^+/\rho\|^lD_M \exp(\log 2D_{\rho,A,r}+D_\delta p-c_\delta K)(1+\|C_n\|),\quad l\leq M.
 \]
Since we can always pick $D_r$ such that  $\log 4D_{\rho,A,r}+D_\delta p\leq D_r p$ for all $p\geq 1$, we have our claims. 
 \end{proof}

\subsection{Covariance fidelity via Mahalanobis norm}
\label{sec:dissmaha}
The spectral projection is necessary for the posterior ensemble to be of rank $p$. The price we need to pay is that this procedure may decreases the ensemble covariance. Let $\rho_{n}$ be the $p+1$-th eigenvalue of $\Kalman_{n}(\Chat^{\tau\rho}_n)$, and $\chi_n=\max\{1, \rho_n/\rho\}$ measures the impact of the spectral projection step $\bfP_n$ over the posterior ensemble. This can be told by the following inequality
\begin{equation}
\label{eqn:nu}
\chi_{n}C^\rho_{n}\succeq\bfP_{n}(\Kalman_{n}(\Chat^{\tau\rho}_{n})-\rho I)\bfP_{n}+\chi_n\rho I\succeq
\bfP_{n}(\Kalman_{n}(\Chat^{\tau\rho}_{n})-\rho I)\bfP_{n}+\rho \bfP_{n}+ \rho_{n}(I-\bfP_{n})\succeq \Kalman_{n}(\Chat^{\tau\rho}_n). 
\end{equation}
This inequality can be verified by checking the eigenvectors of $\Kalman_{n}(\Chat^{\tau\rho}_n)$, which are the same as the one of $C^\rho_n$. The verification is straightforward if one divide the eigenvectors into the ones in the range of $\bfP_n$, and the ones in the null space. The Mahalanobis error dissipation can be formulated as below
\begin{lem}
\label{lem:contractensemble}
With our EnKF Algorithm \ref{alg:EnKF}, the filter error $e_n=\Xbar_n-X_n$ satisfies
\[
\E_n \|e_{n+1}\|^2_{C^\rho_{n+1}} \leq \frac{1}{r}\E_n \chi_{n+1}\mu_{n+1}\|e_n\|^2_{C^\rho_n}+ \E_n (\chi_{n+1}\mu_{n+1}+\chi_{n+1})d\quad a.s..
\]
\end{lem}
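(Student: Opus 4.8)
The plan is to push the posterior Mahalanobis error back to the forecast one through a chain of exact or order-preserving inequalities, and then to extract the precise factor $1/r$ by exploiting a conditional independence built into the Gaussian ensemble noise. First I would record the one-step error recursion. Writing $\bar\xi_{n+1}=\tfrac1K\sum_k\xi^{(k)}_{n+1}$ and $\ehat_{n+1}=\overline{\Xhat}_{n+1}-X_{n+1}=A_ne_n+\bar\xi_{n+1}-\xi_{n+1}$ for the forecast error, the assimilation formula together with $Y_{n+1}-H_n\overline{\Xhat}_{n+1}=\zeta_{n+1}-H_n\ehat_{n+1}$ gives the Kalman-type recursion $e_{n+1}=(I-G_{n+1}H_n)\ehat_{n+1}+G_{n+1}\zeta_{n+1}$. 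Then I would peel off three reductions. Inverting the spectral-projection bound \eqref{eqn:nu} at index $n+1$ gives $[C^\rho_{n+1}]^{-1}\preceq\chi_{n+1}[\Kalman_n(\Chat^{\tau\rho}_{n+1})]^{-1}$, hence $\|e_{n+1}\|^2_{C^\rho_{n+1}}\le\chi_{n+1}\|e_{n+1}\|^2_{\Kalman_n(\Chat^{\tau\rho}_{n+1})}$. The completing-the-square identity for $J(x)=\|x-\overline{\Xhat}_{n+1}\|^2_{\Chat^{\tau\rho}_{n+1}}+|Y_{n+1}-H_nx|^2$, whose minimizer is $\Xbar_{n+1}$ and whose Hessian is $2[\Kalman_n(\Chat^{\tau\rho}_{n+1})]^{-1}$ by the Woodbury form $[\Kalman_n(C)]^{-1}=C^{-1}+H_n^TH_n$, evaluated at $x=X_{n+1}$ with $J(\Xbar_{n+1})\ge0$, yields the clean estimate $\|e_{n+1}\|^2_{\Kalman_n(\Chat^{\tau\rho}_{n+1})}\le\|\ehat_{n+1}\|^2_{\Chat^{\tau\rho}_{n+1}}+|\zeta_{n+1}|^2$. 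Finally the RMT ratio \eqref{eqn:mu} gives $\|\ehat_{n+1}\|^2_{\Chat^{\tau\rho}_{n+1}}\le\mu_{n+1}\,\ehat_{n+1}^TD^{-1}\ehat_{n+1}$ with $D=\E_n\Chat^{\tau\rho}_{n+1}=rA_nC_nA_n^T+r\Sigma^+_n+\tau\rho I$, while the inflation inequality \eqref{eqn:sampledominate} gives $D\succeq rA_nC^\rho_nA_n^T$, so that, since $A_n$ is invertible, $(A_ne_n)^TD^{-1}(A_ne_n)\le\tfrac1r\|e_n\|^2_{C^\rho_n}$. Combining, $\|e_{n+1}\|^2_{C^\rho_{n+1}}\le\chi_{n+1}\mu_{n+1}\,\ehat_{n+1}^TD^{-1}\ehat_{n+1}+\chi_{n+1}|\zeta_{n+1}|^2$ pointwise.

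The decisive step is to take $\E_n$ and recover the coefficient $1/r$ \emph{without} incurring a Young-type $(1+\epsilon)$ loss on the $\|e_n\|^2_{C^\rho_n}$ term. The observation I would use is that $\chi_{n+1}$ and $\mu_{n+1}$ are functions of $\Chat^{\tau\rho}_{n+1}$, hence of the ensemble \emph{deviations} $\{\Delta\xi^{(k)}_{n+1}\}$ (through $\Shat_{n+1}$) and $\mathcal{F}_n$ only, whereas $\ehat_{n+1}$ involves only the ensemble \emph{mean} $\bar\xi_{n+1}$ and the true noise $\xi_{n+1}$. For a Gaussian sample the deviations are independent of the sample mean, and $\xi_{n+1}$ is independent of all ensemble noise; therefore, conditionally on $\mathcal{F}_n$, the pair $(\chi_{n+1},\mu_{n+1})$ is independent of $\eta_{n+1}:=\bar\xi_{n+1}-\xi_{n+1}$, and $\chi_{n+1}$ is independent of $\zeta_{n+1}$. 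Expanding $\ehat_{n+1}^TD^{-1}\ehat_{n+1}=(A_ne_n)^TD^{-1}(A_ne_n)+2(A_ne_n)^TD^{-1}\eta_{n+1}+\eta_{n+1}^TD^{-1}\eta_{n+1}$ and invoking this factorization, the $\mathcal{F}_n$-measurable leading term keeps its exact bound $\tfrac1r\E_n[\chi_{n+1}\mu_{n+1}\|e_n\|^2_{C^\rho_n}]$; the cross term vanishes since $\E_n\eta_{n+1}=0$; and the quadratic noise term factors into $\E_n[\chi_{n+1}\mu_{n+1}]\,\E_n[\eta_{n+1}^TD^{-1}\eta_{n+1}]$. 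Likewise $\E_n[\chi_{n+1}|\zeta_{n+1}|^2]=\E_n[\chi_{n+1}]\,q\le\E_n[\chi_{n+1}]\,d$.

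It then remains to bound $\E_n[\eta_{n+1}^TD^{-1}\eta_{n+1}]=\operatorname{tr}\!\big(D^{-1}(\Sigma_n+\tfrac1K\Sigma^+_n)\big)\le d$, using $\E_n\eta_{n+1}\eta_{n+1}^T=\Sigma_n+\tfrac1K\Sigma^+_n$ together with $D\succeq r\Sigma_n$ (from \eqref{eqn:sampledominate}) and $D\succeq r\Sigma^+_n$ (immediate), the rank bound $\operatorname{rank}\Sigma^+_n\le p\le d$, and $r>1$; this is the only routine estimate and I would carry it out directly. Assembling the pieces gives $\E_n\|e_{n+1}\|^2_{C^\rho_{n+1}}\le\tfrac1r\E_n[\chi_{n+1}\mu_{n+1}\|e_n\|^2_{C^\rho_n}]+\E_n[(\chi_{n+1}\mu_{n+1}+\chi_{n+1})d]$, as claimed. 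I expect the main obstacle to be exactly the decoupling of the second paragraph: the inflation ratios $\mu_{n+1},\chi_{n+1}$ are strongly correlated with the forecast covariance, and a naive estimate would entangle them with $\eta_{n+1}$ and force a $(1+\epsilon)/r$ contraction factor, destroying the geometric rate; the Gaussian mean--deviation independence is what severs this coupling and pins the contraction constant at exactly $1/r$.
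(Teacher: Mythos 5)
Your proposal reaches the stated bound and its overall architecture is sound, but the middle steps follow a genuinely different route from the paper. Where the paper expands $e_{n+1}=(I-G_{n+1}H_n)\hat{e}_{n+1}+G_{n+1}\zeta_{n+1}$ inside the quadratic form, kills the cross terms under $\E_n$, and then pushes the signal part through the chain \eqref{eqn:nu} $\to$ inversion $\to$ definition of $\mu_{n+1}$ $\to$ \eqref{eqn:sampledominate} $\to$ Lemma \ref{lem:matrix}, with two separate trace estimates for the noise terms (one for $\Sigma_n$, one for $G_{n+1}G_{n+1}^T$), you instead obtain the deterministic inequality
\[
\|e_{n+1}\|^2_{\Kalman_n(\Chat^{\tau\rho}_{n+1})}\le\|\hat{e}_{n+1}\|^2_{\Chat^{\tau\rho}_{n+1}}+|\zeta_{n+1}|^2
\]
in one stroke from the variational characterization of the Kalman update, using $[\Kalman_n(C)]^{-1}=C^{-1}+H_n^TH_n$. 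This is correct and arguably cleaner: it replaces both the paper's inequality $(I-G_{n+1}H_n)^T[C^\rho_{n+1}]^{-1}(I-G_{n+1}H_n)\preceq\chi_{n+1}[\Chat^{\tau\rho}_{n+1}]^{-1}$ and its separate bound $\text{tr}(G_{n+1}G_{n+1}^T[C^\rho_{n+1}]^{-1})\le d\chi_{n+1}$, and it makes the $\zeta$-contribution appear with no matrix work at all. Your explicit appeal to the Gaussian mean--deviation independence to factor $\E_n[\chi_{n+1}\mu_{n+1}\,\eta_{n+1}^TD^{-1}\eta_{n+1}]=\E_n[\chi_{n+1}\mu_{n+1}]\,\E_n[\eta_{n+1}^TD^{-1}\eta_{n+1}]$ is also legitimate; the paper uses the same conditional independence implicitly (its cross terms vanish and its traces appear only because $\xi_{n+1},\zeta_{n+1}$ are independent of the ensemble noise generating $G_{n+1}$ and $C^\rho_{n+1}$), so the exact factor $1/r$ survives in both arguments.

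The one step that does not close as stated is your final trace estimate. Because you keep the ensemble-mean noise $\bar{\xi}_{n+1}$ in the forecast error (faithfully to Algorithm \ref{alg:EnKF}, whose mean update adds $\frac1K\sum_k\xi^{(k)}_{n+1}$), your noise covariance is $\Sigma_n+\frac1K\Sigma^+_n$, and the bounds you cite give
\[
\text{tr}\bigl(D^{-1}(\Sigma_n+\tfrac1K\Sigma^+_n)\bigr)\le \frac{d}{r}+\frac{p}{Kr},
\]
which is $\le d$ only when $p/K\le(r-1)d$ (for instance $K\ge 1/(r-1)$ suffices). That holds trivially in the regime where the lemma is deployed ($K>\bfD p$, large $d$), but it is not implied by the hypotheses of the lemma, which carries no condition relating $K$, $p$, $r$, $d$; with $r$ close to $1$, small $d$ and small $K$ the claimed constant $d$ can be exceeded. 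The paper's own proof avoids this term entirely by writing $\overline{\Xhat}_{n+1}=A_n\Xbar_n+B_n$, i.e.\ it silently drops the mean noise that Algorithm \ref{alg:EnKF} adds; under that reading your $\eta_{n+1}=-\xi_{n+1}$, the $\frac1K\Sigma^+_n$ term disappears, and your estimate closes cleanly via $\text{tr}(D^{-1}\Sigma_n)\le d/r\le d$. So your proof in fact surfaces a small inconsistency between the algorithm and the paper's proof; to match the lemma verbatim, either adopt the paper's convention for the forecast mean, or note the harmless extra requirement $K(r-1)\ge1$ (or absorb the surplus into a constant $d+p/(Kr)$ in the additive term).
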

In our formulation above,  $\mu_{n+1}$ describes the fluctuation from random sampling, and $\chi_{n+1}$ describes  the possible deflation made by projection, as seen in \eqref{eqn:nu}.  In the classic Kalman filter setting \cite{RGYU99}, these sequences are simply ones.
\begin{proof}
The forecast estimator is $\overline{\Xhat}_{n+1}$, its error is $\hat{e}_{n+1}=\overline{\Xhat}_{n+1}-X_{n+1}$. The difference of the following two
\[
\overline{\Xhat}_{n+1}=A_n\Xbar_{n}+B_n, \quad X_{n+1}=A_n X_n+B_n+\xi_{n+1},
\]
yields $\hat{e}_{n+1}=A_n e_n-\xi_{n+1}$. Moreover, 
\[
\Xbar_{n+1}=(I-G_{n+1} H_{n}) \overline{\Xhat}_{n+1}+G_{n+1}Y_{n+1}=(I-G_{n+1} H_{n}) \overline{\Xhat}_n+G_{n+1}H_{n}X_{n+1}+G_{n+1}\zeta_{n+1},
\]
so 
\[
e_{n+1}=(I-G_{n+1} H_{n})\hat{e}_{n+1}+G_{n+1}\zeta_{n+1}=(I-G_{n+1} H_{n})A_ne_n-(I-G_{n+1} H_{n}) \xi_n+G_{n+1}\zeta_{n+1}.
\]
Because $\xi_{n+1}$ and $\zeta_{n+1}$ are distributed as $\mathcal{N}(0,\Sigma_n)$ and $\mathcal{N}(0,I)$, conditioned on $\mathcal{F}_n$, 
\begin{align}
\label{tmp:en}
&\E_n e_{n+1}^T[C^\rho_{n+1}]^{-1} e_{n+1}=\E_n e_n^TA^T_n(I-G_{n+1}H_{n})^T [C^\rho_{n+1}]^{-1} (I-G_{n+1}H_{n}) A_{n}e_n\\
\label{tmp:noise}
&+\E_n\xi_{n+1}^T (I-G_{n+1}H_n)^T[C^\rho_{n+1}]^{-1} (I-G_{n+1}H_{n}) \xi_{n+1}+\E_n \zeta^T_{n+1}G_{n+1}^T[C^\rho_{n+1}]^{-1}G_{n+1}\zeta_{n+1}. 
\end{align}
For the first part \eqref{tmp:en}, we claim that   
\begin{equation}
\label{tmp:pin}
A_{n}^T(I-G_{n+1}H_{n})^T [C^\rho_{n+1}]^{-1}(I-G_{n+1}H_{n}) A_{n}\preceq \frac{1}{r}\chi_{n+1}\mu_{n+1} [C_n^\rho]^{-1} .
\end{equation}
Because of \eqref{eqn:nu}, $\chi_{n+1}C^\rho_{n+1}\succeq \Kalman_{n}(\Chat^{\tau\rho}_{n+1}) \succeq(I-G_{n+1}H_{n})\Chat^{\tau\rho}_{n+1} (I-G_{n+1}H_n)^T.$ Moreover $(I-G_{n+1}H_{n})=(I+\Chat^{\tau\rho}_{n+1}H_n^TH_n)^{-1}$ is clearly invertible. The inversion of the inequality above reads
\begin{equation}
\label{tmp:Kalcor}
(I-G_{n+1}H_{n})^T[C^\rho_{n+1}]^{-1}(I-G_{n+1}H_{n})\preceq\chi_{n+1} [\Chat^{\tau\rho}_{n+1}]^{-1}.
\end{equation}
Next, recall \eqref{eqn:sampledominate} we have
\begin{equation}
\label{tmp:expect}
rA_n C_n A_n^T+r\Sigma^+_n+ \rho \tau I\succeq
rA_n C_n^\rho A_n^T.
\end{equation}
By the definition  of $\mu_{n+1}$, $ [\Chat^{\tau\rho}_{n+1}]^{-1}\preceq \mu_{n+1}[rA_n C_n A_n^T+r\Sigma^+_n+ \rho \tau I]^{-1}$, so
\[
 A^T_n(I-G_{n+1}H_{n})^T[C_{n+1}^\rho]^{-1} (I-G_{n+1}H_{n})A_n\preceq \frac{1}{r}\mu_{n+1} \chi_{n+1} A_n [A_n C^\rho_n A_n^T]^{-1}A_n^T,
\]
which by  Lemma \ref{lem:matrix} leads to \eqref{tmp:pin}. To deal with \eqref{tmp:noise}, we use the identity $a^T A a=\text{tr}(A a a^T)$ and the conditional distributions of the system noises,  
\begin{align*}
&\E_n \xi_{n+1}^T (I-G_{n+1}H_n)^T[C^\rho_{n+1}]^{-1} (I-G_{n+1}H_{n}) \xi_{n+1}+\zeta^T_{n+1}G_{n+1}^T[C^\rho_{n+1}]^{-1}G_{n+1}\zeta_{n+1}\\
&=\E_n\text{tr} [(I-G_{n+1}H_{n})[C^\rho_{n+1}]^{-1} (I-G_{n+1}H_n)^T\Sigma_n+ G_{n+1}G_{n+1}^T[C^\rho_{n+1}]^{-1}].
\end{align*}
Note that by \eqref{tmp:Kalcor} and \eqref{tmp:expect}, and $\Chat^{\tau\rho}_{n+1}\succeq r\Sigma^+_n+\tau\rho I$
\[
(I-G_{n+1}H_{n})[C^\rho_{n+1}]^{-1} (I-G_{n+1}H_n)^T
\preceq \chi_{n+1} [\Chat^{\tau\rho}_{n+1}]^{-1}\preceq \chi_{n+1}\mu_{n+1} [r\Sigma^+_n+\tau\rho I ]^{-1} ,
\]
By Lemma \ref{lem:matrix}, and $r\Sigma_n^++\tau\rho I\succeq r\Sigma_n$, 
  \[
  \text{tr} [(I-G_{n+1}H_{n})[C^\rho_{n+1}]^{-1} (I-G_{n+1}H_n)^T\Sigma_n]\leq \frac{1}{r}\chi_{n+1}\mu_{n+1}d.
  \] 
Also notice that
\[
\Kalman_{n}(\Chat^{\tau\rho}_{n+1})=(I-G_{n+1}H_n)\Chat^{\tau\rho}_{n+1}(I-G_{n+1}H_n)^T+G_{n+1}G_{n+1}^T\succeq G_{n+1}G_{n+1}^T. 
\]
Then by $\chi_{n+1}C^\rho_{n+1}\succeq  \Kalman_{n}(\Chat^{\tau\rho}_{n+1})$, $\text{tr}( G_{n+1}G_{n+1}^T[C^\rho_{n+1}]^{-1})\leq d\chi_{n+1}.$
As a sum, we have shown our claim. 
\end{proof}

\subsection{Covariance control via comparison}
\label{sec:corcontrol}
The second mechanism we will exploit is the comparison principle of the Riccati equation. The general idea is to compare the ensemble covariance $C_n$ with a solution of the Riccati equation \eqref{sys:optimalr}. This can be done in two fashions. The first one is through expectation 
\begin{lem}
\label{lem:corcoarse}
For all $n\geq 0$, $\E_{\mathcal{F}^c} C_n\preceq \Riccati_{0,n}(C_0)$ a.s.
\end{lem}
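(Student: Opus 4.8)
The plan is to induct on $n$, comparing the random ensemble covariance $C_n$ against the (given $\mathcal{F}^c$, deterministic) reference Riccati flow. Write $R^\circ_n := \Riccati_{0,n}(C_0)$, so that $R^\circ_0 = C_0$ and, by \eqref{sys:optimalr}, $R^\circ_{n+1} = \Kalman_n\!\left(r^2 A_n R^\circ_n A_n^T + \Sigma'_n\right)$ with $\Sigma'_n = r^2\Sigma^+_n + r^2\tau\rho I_d$. The base case is immediate: $C_0$ is $\mathcal{F}^c$-measurable, since $\sigma\{R_0, C_0, \Rtilde_0\}\subset \mathcal{F}^c$, hence $\E_{\mathcal{F}^c}C_0 = C_0 = R^\circ_0$.

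For the inductive step I assume $\E_{\mathcal{F}^c}C_n \preceq R^\circ_n$ and aim for $\E_{\mathcal{F}^c}C_{n+1}\preceq R^\circ_{n+1}$. First I invoke the left inequality of \eqref{eqn:objcor}, namely $C_{n+1}\preceq \Kalman_n(\Chat^{\tau\rho}_{n+1})$, which dominates the projected posterior covariance by the full Kalman update of the forecast sample covariance. Taking $\E_{\mathcal{F}^c}$ and using that $\Kalman_n$ is $\mathcal{F}^c$-measurable and concave (Lemma \ref{lem:Kalconcave}), the matrix Jensen inequality for the PSD cone lets me pull the expectation through the operator:
\[
\E_{\mathcal{F}^c}C_{n+1}\preceq \E_{\mathcal{F}^c}\Kalman_n(\Chat^{\tau\rho}_{n+1}) \preceq \Kalman_n\!\left(\E_{\mathcal{F}^c}\Chat^{\tau\rho}_{n+1}\right).
\]
I then evaluate the inner expectation. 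From the forecast identity \eqref{eqn:enscor}, $\E_n \Chat_{n+1} = rA_nC_nA_n^T + r\Sigma^+_n$; since the only fresh randomness is the Gaussian $\xi^{(k)}_{n+1}$, which is independent of $\mathcal{F}^c$ and of $C_n$, the tower property across the two filtrations collapses this into the affine expression $\E_{\mathcal{F}^c}\Chat^{\tau\rho}_{n+1} = rA_n\!\left(\E_{\mathcal{F}^c}C_n\right)A_n^T + r\Sigma^+_n + \tau\rho I_d$, using that $A_n,\Sigma^+_n$ are $\mathcal{F}^c$-measurable.

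Finally, feeding the inductive hypothesis into this affine map and applying the standard monotonicity (comparison principle) of the Kalman update operator gives
\[
\Kalman_n\!\left(\E_{\mathcal{F}^c}\Chat^{\tau\rho}_{n+1}\right)\preceq \Kalman_n\!\left(rA_nR^\circ_nA_n^T + r\Sigma^+_n + \tau\rho I_d\right),
\]
and the inflation slack closes the estimate: since $r>1$ we have $r^2(\cdot)\succeq r(\cdot)$ termwise and $r^2\tau\rho I_d\succeq \tau\rho I_d$, so $r^2 A_nR^\circ_nA_n^T + \Sigma'_n \succeq rA_nR^\circ_nA_n^T + r\Sigma^+_n + \tau\rho I_d$; one more use of monotonicity yields $\preceq \Kalman_n(r^2A_nR^\circ_nA_n^T + \Sigma'_n) = R^\circ_{n+1}$, completing the induction.

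The main obstacle is the probabilistic bookkeeping around the two filtrations rather than any single inequality: one must justify that conditioning the random forecast covariance on $\mathcal{F}^c$ (which carries all coefficients but no noise) turns \eqref{eqn:enscor} into a map that is \emph{affine} in $\E_{\mathcal{F}^c}C_n$, and that the matrix Jensen step is legitimate with $\Kalman_n$ treated as a fixed concave operator under $\E_{\mathcal{F}^c}$. Once this is set up, the monotonicity of $\Kalman_n$ and the $r\le r^2$ inflation gap are structural and routine.
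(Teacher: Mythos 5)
Your proposal is correct and follows essentially the same route as the paper's proof: induction on $n$, the tower property to get $\E_{\mathcal{F}^c}\Chat^{\tau\rho}_{n+1}=rA_n(\E_{\mathcal{F}^c}C_n)A_n^T+r\Sigma^+_n+\tau\rho I_d$, the bound $C_{n+1}\preceq\Kalman_n(\Chat^{\tau\rho}_{n+1})$ from \eqref{eqn:objcor}, and then Jensen's inequality plus monotonicity of $\Kalman_n$ via Lemma \ref{lem:Kalconcave}. If anything, your explicit handling of the inflation slack ($r\Sigma^+_n+\tau\rho I_d\preceq\Sigma'_n$ and $rA_nR'_nA_n^T\preceq r^2A_nR'_nA_n^T$) is cleaner than the paper's compressed intermediate expression $rA_nR'_nA_n^T+r\Sigma'_n$, which appears to contain a typo.
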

\begin{proof}
Let $R'_n=\Riccati_{k,n}(C_k)$. We will prove our claim by induction. Suppose that $\E_{\mathcal{F}^c} C_n\preceq R'_n a.s.$, then since $\xi^{(k)}_{n+1}$ is $\mathcal{N}(0,\Sigma_n^+)$ conditioned on $\mathcal{F}^c\vee \mathcal{F}_n$ and 
\[
\E_{\mathcal{F}^c}\Chat^{\tau\rho}_{n+1}=
\E_{\mathcal{F}^c}\E_{\mathcal{F}^c\vee\mathcal{F}_n}\Chat^{\tau\rho}_{n+1}
= rA_n (\E_{\mathcal{F}^c}C_n) A_n^T+r\Sigma_n^++\tau\rho I_d\preceq rA_nR'_nA_n^T+r\Sigma'_n\preceq \Rhat'_{n+1}.
\]
By Lemma \ref{lem:Kalconcave}, $\Kalman_{n}$ is concave and monotone. By Jensen's inequality
\[
\E_{\mathcal{F}^c}C_{n+1}\preceq \E_{\mathcal{F}^c}\Kalman_{n}(\Chat^{\tau\rho}_{n+1})\preceq \Kalman_{n} (\E_{\mathcal{F}^c}\Chat^{\tau\rho}_{n+1})\preceq \Kalman_{n}(\Rhat'_{n+1})=R'_{n+1},\quad a.s.
\] 
\end{proof}
This result explains why covariance inflation is necessary: the random sampling underestimates the covariance on average. On the other hand, the a-priori estimates it provide on $C_n$ is rather limited. For example, $\E_{\mathcal{F}^c}\|C_n\|$ cannot be obtained solely from Lemma \ref{lem:corcoarse}. In order to do a more delicate analysis, we  consider the quotient ratio between $C_n$ and  $\Rtilde_n$ as in Assumption \ref{aspt:lowdim}. Define the following sequence
\begin{equation}
\label{eqn:nudef}
\nu_{n}=\inf\{\nu\geq 1, C_n\preceq\nu \Rtilde_n\}.
\end{equation}
By Lidskii's theorem 6.10 of \cite{Kat82}, $C_n\preceq \nu_n\Rtilde_n$ indicates the ordered eigenvalues of $C_n$ is dominated by the ordered eigenvalues of $\nu_n\Rtilde_n$. Then by Assumption \ref{aspt:lowdim}, the $p+1$-th eigenvalue of $C_n$ is at most $\nu_n\rho$. Therefore sequence $\nu_n$ dominates sequence $\chi_n$. It is of this reason, we will replace $\chi_n$ with its upper bound $\nu_n$ in the following discussion.

 As a matter of fact, we can compare our EnKF with any solution of \eqref{sys:optimalr}, and find the following recursion formula 
\begin{lem}
\label{lem:corfine}
Fix a time $k$, and a covariance sequence $R'_n=\Riccati_{k,n}(R'_k)$. Let $\nu'_n=\inf\{\nu\geq 1, C_n\preceq \nu R'_n\}.$ Then  
\[
\nu'_{n+1}\leq \min\{1, \nu'_n \lambda_{n+1}/r\}.
\] 
Moreover,  $\Kalman_{n}(\Chat^{\tau\rho}_{n+1})\preceq \nu_{n+1}'\Rhat'_{n+1}$. 
\end{lem}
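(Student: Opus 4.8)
The plan is to propagate the covariance domination $C_n \preceq \nu'_n R'_n$ through one forecast--assimilation cycle and turn it into a bound on $C_{n+1}$ relative to $R'_{n+1}$. The engine is a single \emph{forecast comparison}, namely
\[
\Chat^{\tau\rho}_{n+1} \preceq \frac{\nu'_n \lambda_{n+1}}{r}\,\Rhat'_{n+1}.
\]
To obtain this I start from the definition \eqref{eqn:lambda} of $\lambda_{n+1}$, i.e. $\Chat^{\tau\rho}_{n+1}\preceq \lambda_{n+1}[rA_nC_nA_n^T+r\Sigma^+_n+r\tau\rho I]$, insert $A_nC_nA_n^T\preceq \nu'_n A_nR'_nA_n^T$ (which is $C_n\preceq\nu'_n R'_n$ conjugated by $A_n$), and then use $\nu'_n\geq 1$ to absorb the same factor into the $\Sigma^+_n$ and $\tau\rho I$ terms. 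Comparing the resulting $\nu'_n r(A_nR'_nA_n^T+\Sigma^+_n+\tau\rho I)$ with $\Rhat'_{n+1}=r^2 A_n R'_n A_n^T+\Sigma'_n$ and recalling $\Sigma'_n=r^2\Sigma^+_n+r^2\tau\rho I$ from \eqref{sys:changed} leaves exactly one surplus factor of $r$, which gives the displayed bound. This surplus is precisely the room created by the multiplicative inflation and is the source of the $1/r$ contraction.

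Next I push the comparison through the assimilation step. Set $s=\nu'_n\lambda_{n+1}/r$. By Lemma \ref{lem:Kalconcave} the operator $\Kalman_n$ is monotone and concave, and $\Kalman_n(0)=0$; concavity then yields the scaling bound $\Kalman_n(sX)\preceq s\Kalman_n(X)$ whenever $s\geq 1$. Applying monotonicity to $\Chat^{\tau\rho}_{n+1}\preceq s\Rhat'_{n+1}$ and then this scaling (case $s\geq 1$), or plain monotonicity together with $s\Rhat'_{n+1}\preceq\Rhat'_{n+1}$ (case $s<1$), gives in both cases
\[
\Kalman_n(\Chat^{\tau\rho}_{n+1})\preceq \max\{1,s\}\,\Kalman_n(\Rhat'_{n+1})=\max\{1,s\}\,R'_{n+1},
\]
using $R'_{n+1}=\Kalman_n(\Rhat'_{n+1})$ from \eqref{sys:optimalr}. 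Finally \eqref{eqn:objcor} gives $C_{n+1}\preceq\Kalman_n(\Chat^{\tau\rho}_{n+1})$ (the spectral projection and the subtraction of $\rho I$ only decrease covariance), so $C_{n+1}\preceq\max\{1,s\}R'_{n+1}$, which by the definition of $\nu'_{n+1}$ is the claimed recursion $\nu'_{n+1}\leq\max\{1,\nu'_n\lambda_{n+1}/r\}$. (I read the stated $\min$ as a typo for $\max$, since $\nu'_{n+1}\geq 1$ by construction.) For the ``moreover'' I chain the intermediate bound with the elementary Riccati contraction $R'_{n+1}=\Kalman_n(\Rhat'_{n+1})\preceq\Rhat'_{n+1}$, which upgrades $\Kalman_n(\Chat^{\tau\rho}_{n+1})\preceq\max\{1,s\}R'_{n+1}$ to $\Kalman_n(\Chat^{\tau\rho}_{n+1})\preceq\max\{1,s\}\,\Rhat'_{n+1}$, with $\max\{1,s\}$ being the quantity bounding $\nu'_{n+1}$.

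The main obstacle is the forecast-comparison step: the inequality closes only because every additive piece of $\E_n\Chat_{n+1}$ (the terms $A_nC_nA_n^T$, $\Sigma^+_n$, $\tau\rho I$) is matched by a corresponding piece of $\Rhat'_{n+1}$ carrying an extra factor of $r$, so the bookkeeping of inflation factors, and the use of $\nu'_n\geq 1$ to uniformize them, must be handled carefully. The second delicate point is justifying $\Kalman_n(sX)\preceq s\Kalman_n(X)$ for $s\geq 1$ from concavity and $\Kalman_n(0)=0$, and correctly splitting into the $s\geq 1$ and $s<1$ cases so that $\max\{1,s\}$ rather than $s$ appears; this is exactly what keeps the bound on $\nu'_{n+1}$ consistent with $\nu'_{n+1}\geq 1$.
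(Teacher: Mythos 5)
Your proof is correct and takes essentially the same route as the paper's: the identical forecast comparison $\Chat^{\tau\rho}_{n+1}\preceq (\nu'_n\lambda_{n+1}/r)\,\Rhat'_{n+1}$ obtained from the definition \eqref{eqn:lambda} of $\lambda_{n+1}$, the domination $C_n\preceq \nu'_n R'_n$, and $\Sigma'_n=r^2\Sigma^+_n+r^2\tau\rho I_d$, followed by monotonicity and concavity of $\Kalman_n$ (Lemma \ref{lem:Kalconcave}) together with \eqref{eqn:objcor}. You are also right that the stated $\min$ is a typo for $\max$ (the paper itself invokes the lemma with $\max$ in the proofs of Lemmas \ref{lem:mtimesmaha} and \ref{lem:phi}), and your explicit case split $s\geq 1$ versus $s<1$ and the derivation of $\Kalman_n(sX)\preceq s\Kalman_n(X)$ from concavity and $\Kalman_n(0)=0$ merely spell out steps the paper leaves implicit.
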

\begin{proof}
We will prove our claim by induction. Suppose that $ C_n\preceq \nu'_n R'_n$,
\[
\lambda_{n+1}\nu'_n  \Rhat'_{n+1}=\lambda_{n+1}\nu'_n r^2(A_n R'_n A_n^T+\Sigma^+_n+ \tau\rho I_d)
\succeq r \lambda_{n+1}( rA_nC_nA_n+r\Sigma^+_n+r\tau\rho I_d)\succeq   r\Chat^{\tau\rho}_{n+1},
\]
where we used that the definition of $\lambda_{n+1}$. Then by the concavity and monotonicity  of $\Kalman_{n}$, Lemmas \ref{lem:Kalconcave} and \eqref{eqn:objcor}
\[
C_{n+1}\preceq \Kalman_{n} (\Chat_{n+1}^{\tau\rho})\preceq \Kalman_{n}(\nu'_{n+1}\Rhat'_{n+1})
\preceq \nu'_{n+1}\Kalman_{n}(\Rhat'_{n+1})= \nu'_{n+1}R'_{n+1}. 
\]
\end{proof}
The uniform observability condition guarantees an upper bound for our covariance. 
\begin{lem}
\label{lem:boucontract}
Under the uniform observability  Assumption \ref{aspt:obcon}, given any matrix $C_k$, there is a $D_\Riccati$ such that
\[
\|\Riccati_{k,m+k}(C_k) \Rtilde_{m+k}^{-1}\|\leq D_\Riccati.
\]
\end{lem}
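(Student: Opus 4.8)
The plan is to first reduce the claim to a uniform (in the initial matrix $C_k$) operator-norm bound on the Riccati output alone. Since Assumption~\ref{aspt:obcon} gives $\|\Rtilde_{m+k}^{-1}\|\le D_R$, we have $\|\Riccati_{k,m+k}(C_k)\Rtilde_{m+k}^{-1}\|\le \|\Riccati_{k,m+k}(C_k)\|\,D_R$, so it suffices to produce a constant $D'$, depending only on $D_A,D_\Sigma,c_m,m,r,\rho,\tau$ but not on $C_k$, with $\Riccati_{k,m+k}(C_k)\preceq D' I_d$. This ``forgetting of the initial condition'' is exactly the effect that uniform observability is meant to supply; note also that the augmented noise satisfies $\Sigma'_n\succeq r^2\tau\rho I_d\succ0$, so the system \eqref{sys:changed} is automatically uniformly controllable, which is the second ingredient needed to invoke the Riccati boundedness results of \cite{Bou93}.

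For the core estimate I would use the optimality of the Kalman filter together with monotonicity of the flow. By Lemma~\ref{lem:Kalconcave} the update $\Kalman_n$ is monotone, and the prediction map $C\mapsto r^2A_nCA_n^T+\Sigma'_n$ is monotone as well, so $\Riccati_{k,m+k}$ is monotone in $C_k$; thus it is enough to bound the covariance obtained in the ``no prior information'' regime. Concretely, $\Riccati_{k,m+k}(C_k)$ is the error covariance of the optimal affine estimator of $X'_{m+k}$ from the prior and the observations $Y'_{k+1},\dots,Y'_{m+k}$ of \eqref{sys:changed}; by optimality it is dominated in the PSD order by the error covariance of any fixed suboptimal estimator. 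I would take the estimator that discards the prior entirely and reconstructs the state from the observation window alone.

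To build that estimator I would stack the window observations. Using the transition matrices $A_{\ell,k}=r^{\ell-k}A_{\ell-1}\cdots A_k$, each $X'_\ell$ equals $A_{\ell,k}X'_k$ plus accumulated process noise, so the stacked observation vector is a linear image of $X'_k$ whose normal matrix is precisely the observability Gramian $\mathcal{O}_m\succeq c_m I_d$. A weighted least-squares (pseudoinverse) solve then recovers $X'_k$ with error covariance $\preceq c_m^{-1}(\text{const})I_d$, where the constant collects $\|\Sigma'_n\|\le r^2(\rho D_A^2+D_\Sigma)+r^2\tau\rho$ and the identity observation-noise level; propagating this estimate forward to time $m+k$ by $A_{m+k,k}$ (of norm $\le (rD_A)^m$) and adding the bounded propagated process noise yields a constant bound $D'$ that is manifestly independent of $C_k$. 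Combining with the reduction gives $D_\Riccati=D'D_R$.

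I expect the main obstacle to be the linear-algebra bookkeeping of the stacked-observation system: matching the normal matrix of the reconstruction map exactly to $\mathcal{O}_m$ as defined (including the $r$-scaling built into $A_{\ell,k}$ and the index shift from a window starting at $k$ rather than at $1$), and tracking the several noise contributions so that every constant stays uniform in $C_k$. An alternative that sidesteps most of this bookkeeping is to verify that Assumption~\ref{aspt:obcon} together with the automatic controllability noted above meets the hypotheses of the uniform Riccati bound in \cite{Bou93} and cite it directly; the self-contained suboptimal-estimator argument is the fallback if a precise quantitative constant is wanted.
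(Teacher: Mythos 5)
Your proposal is correct and follows essentially the same route as the paper: the paper likewise reduces to a uniform bound on $\|\Riccati_{k,m+k}(C_k)\|$ (then multiplies by $\|\Rtilde_{m+k}^{-1}\|\leq D_R$) and obtains that bound by formally discarding the prior information ($\Rhat_k^{-1}=0$), citing Proposition 19 of \cite{MT16uq} for the resulting observability-Gramian formula, which is exactly the error covariance of your stacked least-squares reconstruction estimator propagated forward to time $m+k$. The only cosmetic difference is that the paper controls the Gramian factors through $\|\mathcal{O}^{-1}_{k,k+m}\mathcal{O}_{j,m+k}\|\leq 1$ rather than invoking $c_m^{-1}$ quantitatively, so its final constant does not depend on $c_m$, whereas yours does.
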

\begin{proof}
This is proved by proposition 19 of \cite{MT16uq} (proposition 6.1 for the arXiv version). By taking no information for forecast at time $k$ (that is formally taking $\Rhat_k^{-1}=0$), we have
\[
\Riccati_{k,m+k}(C_k)\preceq \sum^{k+m}_{j=k+1}Q^j_{k,k+m}\Sigma'_j (Q^j_{k,k+m})^T,\quad Q_{k,m+k}^j=r^{j-k}A_{k,k+m}\mathcal{O}^{-1}_{k,k+m}\mathcal{O}_{j,m+k}A^{-1}_{j,m+k}. 
\]
 Then because $\mathcal{O}_{k,k+m}\succeq \mathcal{O}_{j,m+k}$, so $\|\mathcal{O}^{-1}_{k,k+m}\mathcal{O}_{j,m+k}\|\leq 1$, and recall the bounds in Assumption \ref{aspt:obcon}
\[
\|\Sigma_n'\|\leq r^2\|\Sigma_n\|+r^2\rho\|A_n\|^2+r^2\tau\rho 
\leq r^2(D_\Sigma+\rho D_A^2+\tau\rho)
\]
 \[
 \|Q_{k,m+k}^j\|\leq r^m\|A_{k,k+m}\|\|\mathcal{O}^{-1}_{k,k+m}\mathcal{O}_{j,m+k}\|\|A^{-1}_{j,m+k}\| \leq r^m D_A^{2m}. 
 \]
 Therefore $\|\Riccati_{k,m+k}(C_k)\|\preceq r^{2m+2} m D_A^{4m}(D_\Sigma+\rho D_A^2+\tau\rho).$ Our claim follows as $\|\Rtilde^{-1}_{m+k}\|\leq D_R$.
\end{proof}

\subsection{A Lyapunov function}
In the view of Lemma \ref{lem:contractensemble}, the Mahalanobis error is not dissipating by itself, due to the fluctuation of the sampling effect and truncation errors. Sections  \ref{sec:sample} and \ref{sec:corcontrol} imply these effects are controllable. In this section, we show that they are combined under a Lyapunov function, namely the following 
\begin{equation}
\label{eqn:psiphi}
\phi(\nu)=\exp(D_\psi \log^3 \nu),\quad\psi(\nu)=\phi(\nu)\nu^m(1+a_r \nu). 
\end{equation}
$D_\psi$ is a large constant and $a_r$ is close to $0$, their values will be fixed during our discussion. Before we prove Theorem \ref{thm:weak}, we need two components. We will assume Assumptions \ref{aspt:lowdim} and \ref{aspt:obcon} throughout our discussion in this subsection.  The first component iterates Lemma \ref{lem:contractensemble} inside a time interval of size $m$:
\begin{lem}
\label{lem:mtimesmaha}
For any $b_r>0$, there is a constant $D_b$ such that if $K>D_b p$, and $n\leq m$ 
\[
\E_0 \|e_n\|^2_{C_n^\rho}\leq (r^{-\frac{n}{2}}+b_r(1+\log \nu_0))\nu^{n}_0 \|e_0\|^2_{C_0^\rho} + 2(1+b_r(1+\log\nu_0))n\nu_0^n\sqrt{d}. 
\]
\end{lem}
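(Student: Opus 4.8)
The plan is to iterate the one-step estimate of Lemma \ref{lem:contractensemble} across the block $0\le n\le m$, viewing it as a linear recursion with random coefficients. Write $W_n=\|e_n\|^2_{C^\rho_n}$, which is $\mathcal{F}_n$-measurable (so that $\nu_0$ and $W_0$ are constants under $\E_0$), and recall from the discussion after \eqref{eqn:nudef} that $\chi_{n+1}\le\nu_{n+1}$. Lemma \ref{lem:contractensemble} then reads $\E_n W_{n+1}\le a_{n+1}W_n+b_{n+1}$ with the $\mathcal{F}_n$-measurable coefficients $a_{n+1}=\tfrac1r\E_n[\chi_{n+1}\mu_{n+1}]$ and $b_{n+1}=\E_n[(\chi_{n+1}\mu_{n+1}+\chi_{n+1})d]$, since $W_n$ pulls out of $\E_n$. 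Taking $\E_0$ and applying the tower property gives $\E_0 W_n\le(\prod_{j\le n}a_j)W_0+\sum_{j\le n}(\prod_{j<i\le n}a_i)b_j$, so the whole problem reduces to bounding the per-step multiplier $a_j$ and input $b_j$ and showing the product telescopes to the envelope $\nu_0^n r^{-n/2}$.

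The heart of the argument is the split of each step into the good event $G_{n+1}=(\calU^\lambda_{n+1}\cup\calU^\mu_{n+1})^c$ and its complement, as in Corollary \ref{cor:RMT}. On $G_{n+1}$ both $\lambda_{n+1}\le\sqrt r$ and $\mu_{n+1}\le\sqrt r$; Lemma \ref{lem:corfine} with $R'=\Rtilde$ then forces $\nu_{n+1}\le\max\{1,\nu_n/\sqrt r\}$, so along an all-good path $\nu_n$ is non-increasing and stays $\le\nu_0$. Combining $\chi_{n+1}\le\nu_{n+1}$ with $\mu_{n+1}\le\sqrt r$ yields $\tfrac1r\chi_{n+1}\mu_{n+1}\le\tfrac1r\max\{\sqrt r,\nu_n\}\le\nu_0 r^{-1/2}$ whenever $\nu_n\le\nu_0$, and over $n\le m$ good steps this product collapses to exactly $\nu_0^n r^{-n/2}$, the leading term. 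The same inequalities bound the input: on $G_{n+1}$ one has $\chi_{n+1}\mu_{n+1}+\chi_{n+1}\le 2\nu_0$ up to $\sqrt r$ factors, so summing the $n$ carried-forward noise terms recovers the additive term of the claim.

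The remaining task is to show the rare events contribute only the $b_r(1+\log\nu_0)$ corrections. On $G_{n+1}^c$ I would use the crude bound $\chi_{n+1}\le\nu_{n+1}\le\nu_n\lambda_{n+1}$, so the bad part of $a_{n+1}$ is at most $\tfrac{\nu_n}{r}\E_n[\lambda_{n+1}\mu_{n+1}\unit_{\calU^\lambda_{n+1}\cup\calU^\mu_{n+1}}]$, which is estimated by H\"older's inequality fed with the three exponential bounds of Corollary \ref{cor:RMT}: the tail $\Prob_n(\lambda_{n+1}>8+t)\le\exp(-c_r Kt)$, the moment bound $\E_n[\unit_{\calU^\mu_{n+1}}\mu^l_{n+1}]$, and the rare-event probability. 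The prefactor $\log(\|C_n\|+1)$ there is tamed by $\|C_n\|\le\nu_n\|\Rtilde_n\|\le\nu_0 D_R$ on the good path (using Assumption \ref{aspt:obcon}), giving $\log(\|C_n\|+1)\le c(1+\log\nu_0)$, and the a.s. bound on $\|\Sigma^+_n/\rho\|$ controls the moment prefactor. Thus the bad-event multiplier is at most $C'\nu_0(1+\log\nu_0)\exp(D_r p-c_r K)$; choosing $K>D_b p$ with $D_b$ large (depending on $b_r,m,r$) makes $C'\exp(D_r p-c_r K)$ as small as desired, and absorbing the combinatorial factor $n\le m$ that appears when $\prod_j a_j$ is expanded converts $(r^{-1/2}+\varepsilon)^n$ into $r^{-n/2}+b_r(1+\log\nu_0)$.

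The main obstacle I anticipate is exactly this coupling: $a_{n+1}$ is itself random through $\nu_n$, which simultaneously drives the rare-event probabilities via $\|C_n\|$, so one cannot replace $a_{n+1}$ by a deterministic constant and factor the expectations. The clean route is to keep $\nu_n$ as a random weight, bound it deterministically by $\nu_0$ on the all-good event where the contraction is genuine, and quarantine every excursion into a rare event behind the exponentially small factors of Corollary \ref{cor:RMT}, paying for an occasionally large $\nu_n$, $\mu_{n+1}$, or $\lambda_{n+1}$ with moments that the $\exp(D_r p-c_r K)$ smallness, secured by $K>D_b p$, can always absorb. Keeping the $\log(\|C_n\|+1)$ prefactor honest throughout is what ultimately forces the $1+\log\nu_0$ dependence in the statement.
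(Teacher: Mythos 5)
Your strategy is the same as the paper's: iterate Lemma \ref{lem:contractensemble} over the block (after replacing $\chi_k$ by its dominator $\nu_k$), split into the all-good event where Corollary \ref{cor:RMT} gives $\lambda_k,\mu_k\le\sqrt r$ --- so that Lemma \ref{lem:corfine} applied with $R'_k=\Rtilde_k$ keeps $\nu_k\le\nu_0$ and the product collapses to $\nu_0^n r^{-n/2}$ --- and then quarantine the union of rare events behind H\"older's inequality and the $\exp(D_rp-c_rK)$ factors, with $K>D_bp$ absorbing everything. One notational warning: your displayed recursion $\E_0 W_n\le(\prod_{j\le n}a_j)W_0+\sum_j(\prod_{i>j}a_i)b_j$ with $a_j=\tfrac1r\E_{j-1}[\chi_j\mu_j]$ is not what the tower property yields, since the $a_j$ are random and correlated with $W_{j-1}$; the correct outcome, which is what the paper works with in \eqref{tmp:emexp}, is $\E_0\bigl[\prod_k\nu_k\mu_k/r\bigr]\,\|e_0\|^2_{C_0^\rho}$ plus the analogous inhomogeneous terms, i.e.\ expectations of full products of the raw variables. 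You effectively self-correct in your last paragraph, so I treat this as a slip of notation.

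The genuine gap is your handling of the prefactor $\log(\|C_n\|+1)$ appearing in every bound of Corollary \ref{cor:RMT}. You propose to tame it via $\|C_n\|\le\nu_n\|\Rtilde_n\|\le\nu_0 D_R$ \emph{on the good path}, but this prefactor must be controlled exactly where the path is not good: it enters $\Prob_0(\calU_k)=\E_0\Prob_{k-1}(\calU_k)$ for every $k\le n$, including steps $k$ preceded by earlier rare events, and it enters the H\"older factors $\E_0[\unit_{\calU_k}\mu_k^{3n}]$ in \eqref{tmp:rareuk}, whose history up to time $k-1$ is unrestricted. On such paths $\nu_{k-1}$ can be as large as $\nu_0\prod_{j<k}\lambda_j$, so $\|C_{k-1}\|\le\nu_0D_R$ is simply unavailable; a first-rare-step decomposition rescues only the probability term, not the moment factors at the remaining steps. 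The paper closes precisely this hole with a mechanism you never invoke: Lemma \ref{lem:corcoarse} ($\E_{\mathcal{F}^c}C_n\preceq\Riccati_{0,n}(C_0)$), combined with Jensen's inequality for the concave function $\log$ and the unfiltered-covariance bound $\Riccati_{0,k}(C_0)\preceq V_k$ under Assumption \ref{aspt:obcon} together with $C_0\preceq\nu_0\Rtilde_0$, which produces the \emph{unconditional} estimate $\E_0\log(\|C_{k-1}\|+1)\le D_m(1+\log\nu_0)$ of \eqref{eqn:probU}. (Your outline could instead be patched pathwise, writing $\log(\|C_{k-1}\|+1)\le\log(\nu_0D_R+1)+\sum_{j<k}\log\lambda_j$ and paying for the $\log\lambda_j$ with the exponential tail of $\lambda$, but some such unconditional argument must be supplied; as written, the step is circular.) The rest of your outline --- the a.s.\ bound on $\|\Sigma_n^+/\rho\|$, the $\lambda$-tail, and the choice of $D_b$ --- matches the paper.
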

\begin{proof}
Since $\nu_n$ dominates $\xi_n$, we can iterate Lemma \ref{lem:contractensemble} $n$ times, and find that
\begin{equation}
\label{tmp:emexp}
\E_0 \|e_n\|^2_{C_n^\rho}\leq \left(\E_0 \prod_{k=1}^n \nu_k \mu_k/r\right) \|e_0\|_{C_0^\rho}^2 +
\sqrt{d}\E_0 \sum_{j=1}^n(\nu_j\mu_j/r+\nu_j)\prod_{k=j+1}^n \nu_k \mu_k/r.
\end{equation}
We will deal with the linear coefficient $\E_0 \prod_{k=1}^n \nu_k \mu_k/r$ first.  Consider the following rare events:
\begin{equation}
\label{eqn:rarek}
\calU_k=\{\mu_k\geq \sqrt{r}\text{ or }\lambda_k\geq \sqrt{r}\},\quad k=1,\ldots, n,
\end{equation}
and $\calU=\cup_{k=1}^n \calU_k$. Outside of $\calU$, $\lambda_k\leq \sqrt{r},\mu_k\leq \sqrt{r}$, so by Lemma \ref{lem:corfine}, $\nu_k\leq \nu_0$ and
\[
\prod_{k=1}^n \nu_k \mu_k/r\leq \nu_0 ^n r^{-\frac{n}{2}}. 
\]
This indicates that 
\[
\E_0 \prod_{k=1}^n \nu_k \mu_k/r\leq \nu_0 ^n r^{-\frac{n}{2}}+\E_0 \unit_{\calU} \prod_{k=1}^n\nu_k\mu_k.
\]
Note that since $\lambda_k\geq 1$, Lemma \ref{lem:corfine} indicates $\nu_k\leq \nu_0\prod_{j=1}^k\lambda_j$ a.s.. So the rare event part in the right hand side above can be bounded as follows:
\begin{align}
\notag
\E_0 \unit_{\calU} \prod_{k=1}^n\nu_k\mu_k\leq 
\nu_0^n\E_0 \unit_{\calU} &\prod_{k=1}^n \lambda_k^{n-k} \prod_{k=1}^n\unit_{\calU}\mu_k
\leq \nu_0^n[\Prob_0(\mathcal{\calU})]^{1/3}\left[\E_0 \prod_{k=1}^n \lambda^{3(n-k)}_k  \right]^{1/3}\left[\E_0 \unit_{\calU}\prod_{k=1}^n \mu^3_k\right]^{1/3}\\
\label{tmp:rareuk}
&\leq \nu_0^n[\Prob_0(\mathcal{\calU})]^{1/3}\prod_{k=1}^n\left[\E_0  \lambda^{3n(n-k)}_k \right]^{1/3n}\prod_{k=1}^n \left[\E_0 \unit_{\calU_k}\mu^{3n}_k\right]^{1/3n}
\end{align}
Because $n\leq m$, by Corollary \ref{cor:RMT} there are constant $c_r, D_r$ such that $\Prob_0 (\calU)\leq \sum_k\Prob_0 (\calU_k)$ with
\[
\Prob_0(\calU_k)= \E_0 \Prob_{k-1}(\calU_k)\leq   \exp(D_r p-c_rK)\E_0\log( \|C_{k-1}\| +1 )\\
\]
To continue, by Jensen's inequality, the concavity of $\log$, and Lemma \ref{lem:corcoarse}
\begin{align*}
\E_0  \log (\|C_{k-1}\|+1)\leq &\E_0\log  (\text{tr}(C_{k-1})+2)\leq \log (\text{tr}\E_0 C_{k-1}+1)\\
&\leq  \log (\text{tr}(\Riccati_{0,k-1}(C_0))+1)
\leq \log p+\log(\|\Riccati_{0,k-1}(C_0)\|+\tfrac1p).
\end{align*}
Note that $\Riccati_{0,k}(C_0)$ is the Kalman filter covariance of system \eqref{sys:changed} with $X_0\sim \mathcal{N}(0,C_0)$. Therefore it is dominated by the unfilter covariance 
\[
\Riccati_{0,k}(C_0)\preceq V_k=A_{k,0}C_0A_{k,0}^T+\sum_{j=1}^k A_{k,j}\Sigma_j' A_{k,j}.
\]
By the bounds of $\|A_{k,j}\|$ and $\|\Sigma_j\|$ for $k\leq m$, and that $C_0\preceq \nu_0\Rtilde_0\Rightarrow C_0\leq \nu_0\|\Rtilde_0\|$, there is a constant $D_m$ such that 
\begin{equation}
\label{eqn:probU}
\Prob_0 (\calU)\leq m\sup_{k\leq m}( \log p+\log(\|\Riccati_{0,k-1}(C_0)\|+\tfrac1p))\leq D_m (\log\nu_0+1).
\end{equation}
By Corollary \ref{cor:RMT}, $D_m$ can be properly enlarged so that $\E_0 \lambda_k^{3n(n-k)}\leq D_m$ for all $n\leq m$, and 
\[
\E_0 \unit_{\calU_k}\mu_k^{3n}\leq D_m(\log \nu_0+1)\exp(D_rp-c_rK).
\]
Therefore, $\eqref{tmp:rareuk}\leq \nu_0^n\exp(\tfrac{2}{3}D_rp-\tfrac23c_rK)D_m(\log \nu_0+1)$. In summary, if 
\[
K-D_b p:=K-\frac{D_r}{r_r}p
\] is large enough, 
\[
\E_0\prod_{k=1}^n \nu_k \mu_k/r\leq \nu_0^n (r^{-\frac{n}{2}}+b_r(1+\log \nu_0)).
\]
The constant terms in \eqref{tmp:emexp} can be bounded in a similar fashion. Note that outside the rare event $\calU$, 
$(\nu_j\mu_j/r+\nu_j)\prod_{k=j+1}^n \nu_k \mu_k/r\leq 2\nu_0^m$. And inside the rare event, we can bound it exactly like in \eqref{tmp:rareuk}, but with fewer terms, so 
\[
\E_0 (\nu_j\mu_j/r+\nu_j)\prod_{k=j+1}^n \nu_k \mu_k/r
\leq 2\nu_0^n(1+b_r(1+\log \nu_0)). 
\]
Using each term in \eqref{tmp:emexp} with the corresponding upper bound above yields our claim. 
\end{proof}

The second component shows that $\nu_n$ is a very stable sequence, it indicates that $C_n$ is dominated by $\Rtilde_n$ for most of the times. But first we have a purely computational verification:
\begin{lem}
\label{lem:lambdapsi}
For any fixed $c_r>0$, there is a constant $v_0$, such that for any $D_\psi$, there exists a $K$, such that if $Z$ has exponential distribution with parameter $c_r(K-1)$, then
\[
\E \exp(D_\psi\log^3(8+Z))\leq \exp(D_\psi v_0).
\]
\end{lem}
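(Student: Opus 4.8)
The plan is to write the expectation as an explicit integral against the exponential density of rate $\lambda := c_r(K-1)$,
\[
\E \exp(D_\psi \log^3(8+Z)) = \int_0^\infty \lambda \exp\!\big(F(t)\big)\,dt, \qquad F(t) := D_\psi \log^3(8+t) - \lambda t,
\]
and to exploit that as $K\to\infty$ (hence $\lambda\to\infty$) the variable $Z$ concentrates at $0$, so $8+Z$ concentrates at $8$ and the integrand collapses to its value $\exp(D_\psi \log^3 8)$ at the origin. Accordingly I would fix $v_0 := \log^3 8 + 1$ once and for all; any constant strictly larger than $\log^3 8$ works, and this choice is independent of $D_\psi$. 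The whole point is that the fixed gap $v_0 - \log^3 8 = 1$ will later absorb an $O(1)$ multiplicative error coming from the tail.

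The analytic core is a monotonicity estimate on $F$. First I would record that $\frac{d}{dt}\log^3(8+t) = 3\,\frac{\log^2(8+t)}{8+t}$, and that $s \mapsto \frac{\log^2 s}{s}$ is decreasing on $[8,\infty)$ (its derivative is $\frac{\log s\,(2-\log s)}{s^2}$, negative once $\log s > 2$, and $\log 8 > 2$), so this derivative is bounded by the constant $M_0 := \frac{\log^2 8}{8}$. Hence $F'(t) = 3D_\psi \frac{\log^2(8+t)}{8+t} - \lambda \le 3 D_\psi M_0 - \lambda$. If one enforces $\lambda \ge C D_\psi M_0$ for a constant $C > 3$ to be chosen, then $F'(t) \le -(1 - 3/C)\lambda < 0$ for all $t \ge 0$, so $F$ is strictly decreasing with $F(t) \le F(0) - (1-3/C)\lambda t = D_\psi\log^3 8 - (1-3/C)\lambda t$. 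Integrating this pointwise bound gives
\[
\int_0^\infty \lambda \exp(F(t))\,dt \le \lambda \, e^{D_\psi \log^3 8}\int_0^\infty e^{-(1-3/C)\lambda t}\,dt = \frac{1}{1 - 3/C}\, e^{D_\psi \log^3 8}.
\]

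It remains to turn the prefactor $\frac{1}{1-3/C}$ into the admissible factor $e^{D_\psi(v_0 - \log^3 8)} = e^{D_\psi}$. Since the prefactor tends to $1$ as $C \to \infty$, I would choose $C$ large enough that $\frac{1}{1-3/C} \le e^{D_\psi}$ (e.g. $C = 3e^{D_\psi}/(e^{D_\psi}-1)$ gives equality), and then take $K$ to be the smallest integer with $c_r(K-1) \ge C D_\psi M_0$; this is where the dependence $K = K(D_\psi)$ enters, while $v_0$ stays fixed. With these choices the displayed bound reads $\E \exp(D_\psi\log^3(8+Z)) \le e^{D_\psi}\,e^{D_\psi\log^3 8} = \exp(D_\psi v_0)$, as claimed.

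The step I expect to be most delicate is precisely this decoupling: one must keep $v_0$ independent of $D_\psi$ while the natural estimate only yields a bound of the form $(1+\epsilon_{D_\psi})\exp(D_\psi\log^3 8)$. The resolution is the observation that the slack $\exp(D_\psi(v_0 - \log^3 8))$ grows with $D_\psi$, so a \emph{constant} additive margin in the exponent buys a multiplicative margin that swallows any fixed overshoot in the prefactor, at the cost of taking $K$ (equivalently $\lambda$) correspondingly large — which is exactly what the quantifier order of the lemma permits. A secondary point worth checking is that enlarging $\lambda$ beyond the threshold only improves the estimate (faster decay of $F$), so it is legitimate to take $K$ as large as needed; one should also verify, for $D_\psi$ small, that the required $\lambda \ge C D_\psi M_0$ stays bounded (since $C \sim 3/(D_\psi(v_0-\log^3 8))$ there), so that $K$ is genuinely attainable.
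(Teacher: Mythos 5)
Your proof is correct and respects the lemma's quantifier order ($v_0$ fixed before $D_\psi$, with $K$ chosen afterwards), but its technical core is genuinely different from the paper's. Both arguments write the expectation as $\int_0^\infty \lambda e^{F(t)}\,dt$ with $\lambda=c_r(K-1)$ and win by taking $\lambda$ proportional to $D_\psi$, so that linear exponential decay dominates the $D_\psi\log^3$ growth; the difference is how that domination is established. The paper rescales the integration variable and applies the pointwise inequality $e^u\ge u^3/6$ with $u=\log(8+z/c_K)$, so the choice $c_K=12D_\psi$ cancels the cubic term and leaves $\int_0^\infty e^{48D_\psi-z/2}\,dz=2e^{48D_\psi}$; this yields a $v_0$ near $48$ and tacitly uses that $D_\psi$ is bounded below (true in the application, since Lemma \ref{lem:phi} takes $D_\psi\ge 1$) to absorb the prefactor $2$ into $\exp(D_\psi v_0)$. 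You instead prove a uniform Lipschitz bound for $t\mapsto\log^3(8+t)$, exploiting that $s\mapsto \log^2 s/s$ is decreasing on $[8,\infty)$ because $8>e^2$, so that $F'\le -(1-3/C)\lambda$ once $\lambda\ge CD_\psi M_0$ with $M_0=\log^2 8/8$, and the integral is controlled by the integrand's value at the origin. What your route buys: an explicit and much smaller constant $v_0=\log^3 8+1$, validity for every $D_\psi>0$ with no lower bound, and a bound that is manifestly uniform over all $K$ above the threshold --- which is the form actually needed in the proof of Lemma \ref{lem:phi} (``for $K$ larger than a constant $K_\psi$''), and which the paper's proof, as written with $c_K=12D_\psi$ exactly, recovers only via the easy extra remark that its integral is monotone in $c_K$. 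What the paper's route buys: the inequality $u^3\le 6e^u$ makes no use of the shift $8$ exceeding $e^2$, so it is marginally more robust to changing that constant. Your closing concern about attainability of $K$ for small $D_\psi$ is a non-issue (any finite threshold $CD_\psi M_0/c_r$ is exceeded by some integer), but it is harmless.
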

\begin{proof}
Denote $c_K=c_r(K-1)$,
\[
\E \exp(D_\psi\log^3(8+Z))=\int^\infty_0 c_K\exp(D_\psi \log^3 (8+z))\exp(-c_Kz)dz=\int^\infty_0\exp(D_\psi\log^3(8+z/c_K)-z)dz. 
\]
Let $u=\log(8+z/c_K)\geq 0$, then by $\exp(u)\geq \frac{1}{6}u^3$,
\[
D_\psi\log^3(8+z/c_K)-\frac{1}{2}z=D_\psi u^3-\frac{1}{2}c_K(\exp u-8)\leq (D_\psi-\frac{1}{12}c_K) u^3+4c_K. 
\]
So if we let $c_K=12D_\psi$, then 
\[
\int^\infty_0\exp(D_\psi\log^3(8+z/c_K)-z)dz\leq \int^\infty_0\exp(48D_\psi-\frac{1}{2}z)dz=2\exp(48D_\psi). 
\]
So clearly we can find our $v_0$. 
\end{proof}
The second component is the following. 
\begin{lem}
\label{lem:phi}
For a sufficiently small $a_r\in (0,1)$, there are constants $D_\psi, D_b\geq 1$ such that with $\phi(\nu)=\exp(D_\psi \log^3 \nu)$, and $K-D_b p$ being sufficiently large,
\begin{equation}
\label{tmp:psi}
\E_0\psi(\nu_m) =\E_0\phi(\nu_{m})\nu_m^m(1+a_r\nu_m) \leq (1+2a_r)\phi(\nu_0). 
\end{equation}
Also with constant $\gamma_\phi=\exp(-\frac18 D_\psi \log^3 r)<1$ and any $k\leq m+1$
\begin{equation}
\label{tmp:phi}
\E_0\nu_m^k\phi(\nu_{m}) \leq \gamma_\phi \nu_0^k\phi(\nu_0)+1+a_r. 
 \end{equation}
\end{lem}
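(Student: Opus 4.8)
The plan is to decompose the $m$-step evolution of $\nu_n$ according to the rare events $\calU^\lambda_k=\{\lambda_k\geq\sqrt r\}$ from Corollary~\ref{cor:RMT}, writing $\calU=\bigcup_{k=1}^m\calU^\lambda_k$ for their union. Both displayed inequalities reduce to controlling $\E_0\,\nu_m^\kappa\phi(\nu_m)$ for every power $\kappa\leq m+1$, since $\psi(\nu_m)=\nu_m^m\phi(\nu_m)+a_r\nu_m^{m+1}\phi(\nu_m)$ merely combines the cases $\kappa=m$ and $\kappa=m+1$. Setting $x_n=\log\nu_n\geq0$, Lemma~\ref{lem:corfine} gives the reflected recursion $x_{k+1}\leq(x_k+\log\lambda_{k+1}-\log r)^+$, so on the good event $\calU^c$ every increment is at most $-\tfrac12\log r$ and hence $\nu_m\leq\max\{1,\nu_0 r^{-m/2}\}$ deterministically.

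On $\calU^c$ I would first settle the contraction by elementary algebra. When $\nu_0\leq r^{m/2}$ one has $\nu_m=1$, which only feeds the additive constant. When $\nu_0>r^{m/2}$ we have $0\leq x_m\leq x_0-\tfrac m2\log r$, and the factorization $\log^3\nu_0-\log^3\nu_m=(x_0-x_m)(x_0^2+x_0x_m+x_m^2)$ together with $m\geq1$ gives
\[
\log^3\nu_0-\log^3\nu_m\ \geq\ \tfrac m2\log r\cdot x_0^2\ \geq\ \tfrac m2\log r\Bigl(\tfrac m2\log r\Bigr)^2\ \geq\ \tfrac18\log^3 r .
\]
Combined with $\nu_m^\kappa\leq\nu_0^\kappa$ this produces $\nu_m^\kappa\phi(\nu_m)\leq\gamma_\phi\nu_0^\kappa\phi(\nu_0)$ with exactly $\gamma_\phi=\exp(-\tfrac18 D_\psi\log^3 r)$. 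Keeping instead the polynomial weight, the same decrease $D_\psi(x_0^3-x_m^3)\geq D_\psi\tfrac{m\log r}{2}x_0^2$ dominates the linear growth $m x_m+\log(1+a_r\nu_m)$ once $D_\psi$ is large, so that $\unit_{\calU^c}\psi(\nu_m)\leq\phi(\nu_0)$, deliberately leaving the slack $2a_r\phi(\nu_0)$ for the rare event.

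The genuine difficulty is the rare event $\calU$, where I would bound $\E_0\,\unit_{\calU}\nu_m^\kappa\phi(\nu_m)$ using the crude $\nu_m\leq\nu_0\prod_{j=1}^m\lambda_j$, the per-step exponential tail $\Prob_{k-1}(\lambda_k>8+t)\leq\exp(-c_rKt)$, and the probability estimate $\Prob_0(\calU)\leq D_m(\log\nu_0+1)\exp(D_rp-c_rK)$ obtained exactly as in Lemma~\ref{lem:mtimesmaha} from Lemma~\ref{lem:corcoarse}. The obstacle is the steep Lyapunov weight $\phi(\nu_m)=\exp(D_\psi\log^3\nu_m)$: because the $\lambda_j$ are only exponentially (not boundedly) distributed, $\nu_m$ can be vastly inflated, and one must still extract the $\nu_0$-dependence as \emph{exactly} $\phi(\nu_0)$ times a small prefactor rather than as a larger power of it. This is resolved by using that on $\calU$ only a few steps are bad: on the good steps $\lambda_j\leq\sqrt r$, so $x_m\leq x_0+\sum_{\text{bad }j}\log\lambda_j$, and expanding $(x_0+\sum\log\lambda_j)^3$ the cross terms of order $D_\psi x_0^2\log\lambda_j$ are subdominant to $D_\psi x_0^3$ for large $\nu_0$ and uniformly bounded for $\nu_0$ in a compact range, while the residual $\exp(D_\psi\log^3\lambda_j)$ factors are integrable against the exponential tail precisely by Lemma~\ref{lem:lambdapsi} — this is where the balance $c_rK\gtrsim D_\psi$ is used. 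After a Hölder split isolating $\Prob_0(\calU)^{1/3}$ (exponentially small in $K-D_bp$) from these moments, the entire rare-event contribution is $\leq a_r\phi(\nu_0)+a_r$ once $K-D_bp$ is large enough.

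Adding the two regimes then gives $\E_0\nu_m^\kappa\phi(\nu_m)\leq\gamma_\phi\nu_0^\kappa\phi(\nu_0)+1+a_r$, which is the second assertion; specializing to $\kappa=m,m+1$ and combining $\unit_{\calU^c}\psi(\nu_m)\leq\phi(\nu_0)$ with the rare-event slack $2a_r\phi(\nu_0)$ yields $\E_0\psi(\nu_m)\leq(1+2a_r)\phi(\nu_0)$, the first assertion. I expect the rare-event estimate of the third paragraph — controlling $\phi$ of a heavy-tailed product while preserving the exact $\phi(\nu_0)$ scaling — to be the crux, and indeed the reason both the cubic-logarithm Lyapunov function and the companion Lemma~\ref{lem:lambdapsi} were introduced.
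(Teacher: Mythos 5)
Your good-event analysis and your treatment of the rare event for $\nu_0$ in a \emph{bounded} range are essentially the paper's argument, but the resolution you propose for the crux --- the rare event when $\nu_0$ is large --- does not work, and this is a genuine gap. On $\calU$ you bound $\nu_m\leq \nu_0\prod_j\lambda_j$ and expand $D_\psi(x_0+L)^3$, claiming the cross term $3D_\psi x_0^2L$ is subdominant. Pointwise it is smaller than $D_\psi x_0^3$, but that budget is already spent: after extracting $\phi(\nu_0)\nu_0^\kappa$, what must be small is $\E_0\bigl[\unit_{\calU}\exp\bigl(3D_\psi x_0^2L+3D_\psi x_0L^2+D_\psi L^3+\kappa L\bigr)\bigr]$, and this quantity blows up with $\nu_0$ for any fixed $K$. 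Indeed, already on the single event $\{\lambda_1\geq\sqrt r\}$ one has $L\geq\tfrac12\log r$, so the expectation is at least $r^{3D_\psi x_0^2/2}\,\Prob_0(\lambda_1\geq\sqrt r)$, and $\Prob_0(\lambda_1\geq\sqrt r)$ does not decay as $\nu_0\to\infty$; equivalently, the exponential moment $\E\,\lambda^{3D_\psi x_0^2}\lesssim\Gamma(3D_\psi x_0^2+1)/c_K^{3D_\psi x_0^2}$ grows super-exponentially in $x_0^2$, so no H\"older split against $\Prob_0(\calU)^{1/q}\sim e^{-cK/q}$ can save it. The alternative splitting $(x_0+L)^3\leq(1+\epsilon)^3x_0^3+(1+\epsilon^{-1})^3L^3$ trades this for a factor $\phi(\nu_0)^{(1+\epsilon)^3}$, i.e.\ exactly the ``larger power of $\phi(\nu_0)$'' you noted must be avoided. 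Since the lemma is iterated in Theorem \ref{thm:strong} with $\nu_0$ replaced by the unbounded random variables $\nu_{m(j-1)}$, uniformity in $\nu_0$ cannot be dispensed with.

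The missing idea is the paper's dichotomy in $\nu_0$, and in particular Lemma \ref{lem:boucontract}, which you never invoke. For $\nu_0$ larger than a threshold $\Pi$, the paper does \emph{not} compare $C_n$ with $\Rtilde_n$ at all: it applies Lemma \ref{lem:corfine} with the reference $R'_k=\Riccati_{0,k}(C_0)$, for which $\nu'_0=1$, and then uses the uniform-observability bound $\|\Riccati_{0,m}(C_0)\Rtilde_m^{-1}\|\leq D_\Riccati$ of Lemma \ref{lem:boucontract} --- a bound independent of $C_0$ --- to conclude $\nu_m\leq D_\Riccati\prod_{k=1}^m\lambda_k$ on \emph{every} event, rare or not. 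This caps $\E_0\psi(\nu_m)$ by a constant (via Lemma \ref{lem:lambdapsi}), which is trivially below $(1+2a_r)\phi(\nu_0)$ and $\gamma_\phi\nu_0^k\phi(\nu_0)+1$ once $\nu_0\geq\Pi$. Only for $\nu_0\leq\Pi$ does the paper run the argument you describe (comparison with $\Rtilde_n$, Cauchy--Schwarz against $\Prob_0(\calU)^{1/2}$), and there it works precisely because $x_0$ is bounded so all the offending moments are constants. In short: the Riccati forgetting property supplied by Assumption \ref{aspt:obcon} is what controls the heavy-tailed product when $\nu_0$ is large, and no estimate on the $\lambda_j$ alone can substitute for it.
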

\begin{proof}
First, let us show \eqref{tmp:psi} assuming $\nu_0$ is very large. Let $R'_k=\mathcal{R}_{0,k}(C_0)$, and 
\[
\nu'_k=\inf\{\nu\geq 1, C_k\preceq \nu R_k' \}. 
\]
Then $\nu'_0=1$, and by Lemma \ref{lem:corfine}, $\nu'_{k+1}\leq \max\{1, \nu'_k \lambda_{k+1}/r\}$. $\nu'_m$ gives us a bound for $\nu_m$, because 
\[
\nu_m\leq \nu_m'\left\|R'_m \Rtilde_m^{-1}\right\|, 
\]
where by Lemma \ref{lem:boucontract}, there is a constant $D_\Riccati$ such that, $\left\|R'_m \Rtilde_m^{-1}\right\|\leq D_\Riccati$. Since $\lambda_k\geq 1$, we find
\[
\log \nu_m\leq \log\nu_m'+\log D_\Riccati\leq \sum_{k=1}^m \log^+(\lambda_k /r)+\log D_\Riccati\leq \sum_{k=1}^m \log \lambda_k+\log D_\Riccati,
\]
with $\log^+=\max\{\log, 0\}$. Plug this upper bound into 
\[
\log (\phi(\nu_m) \nu_m^m(a_r\nu_m+1))\leq \log (2\phi(\nu_m) \nu_m^{m+1})=D_\psi\log^3 \nu_m+(m+1)\log\nu_m+ \log2.
\] 
By Young's inequality, there exists a constant $D_1$ independent of $D_\psi$  such that  
\[
\E_0 \phi(\nu_m) \nu_m^m(a_r\nu_m+1)\leq \E_0\exp\left(D_\psi D_1+D_\psi D_1 \sum_{k=1}^m \log^3 \lambda_k\right).
\]
By Corollary \ref{cor:RMT}, we can find a sequence of independent, exponential with parameter $c_r(K-1)$, random variables $Z_k$ such that  $\lambda_k\leq 8+Z_k$ a.s.. Then because the function above is increasing with $\lambda_k$
 Then 
\[
\E_0\exp\left(D_\psi D_1+D_\psi D_1 \sum_{k=1}^m \log^3 \lambda_k\right)\leq 
\exp(D_\psi D_1)\prod_{k=1}^m\E(D_\psi D_1 \log^3 (8+Z_k)).
\] 
By Lemma \ref{lem:lambdapsi}, there is a $\Pi$ such that if $\nu_0>\Pi$, then for any fixed $D_\psi$, for $K$ larger than a constant $K_\psi$
\[
\E_0\psi(\nu_m)\leq (1+2a_r)\phi(\nu_0),
\]
 which completes \eqref{tmp:psi}. As for \eqref{tmp:phi}, simply note that in the previous step we have built an upper bound for $\phi(\nu_m) \nu_m^{m+1}$, while  $(1+2a_r)\phi(\nu_0)\leq \gamma_\phi\phi(\nu_0)\nu_0^k+1$ for a sufficiently large $\Pi$ and $\nu_0\geq \Pi$. 

Next we do a more delicate bound with $\nu_0\leq \Pi$ for \eqref{tmp:psi}. This time, we simply apply Lemma \ref{lem:corfine} to $R'_k=\Rtilde_k$, so 
\begin{equation}
\label{tmp:mark1}
\nu_{n+1}\leq \max\{1, \nu_n \lambda_{n+1}/r\}.
\end{equation}
Recall the rare events $\calU=\cup_{k=1}^m\calU_k$ with $\calU_k$ given by \eqref{eqn:rarek}.  First, let us assume $\nu_0\geq \sqrt{r}$, then within $\calU^c$, $\nu_m\leq\nu_1\leq \nu_0/\sqrt{r}$, 
\begin{align*}
\label{tmp:Dpsi}
 &D_\psi \log^3 \nu_m+k\log \nu_m+\log(1+a_r \nu_m)\\
 &\leq D_\psi (\log \nu_0-\tfrac{1}{2}\log r)^3+ k(\log \nu_0-\tfrac{1}{2}\log r)+\log(1+a_r\nu_0/\sqrt{r}).
\end{align*}
Note that for sufficiently small $a_r$, $1+a_r\nu_0/\sqrt{r}\leq \max\{r^{m/2}, \nu_0/\sqrt{r}\}$ for all $\nu_0\geq 0$. So for any fixed $D_\psi\geq 1$, one can verify that 
\[
D_\psi \log^3 \nu_m+k\log \nu_m\leq D_\psi \log^3 \nu_0+k\log \nu_0+\log \gamma_\phi.
\]
Moreover  for a sufficiently large $D_\psi$,
\[
D_\psi \log^3 \nu_m+m\log \nu_m+\log(1+a_r \nu_m)\leq D_\psi \log^3 \nu_0. 
\]
Else if $\nu_0\leq \sqrt{r}$, then outside $\calU$, $\nu_m=1$, which makes $D_\psi \log^3 \nu_m+k\log \nu_m=0$,
 \[
 D_\psi \log^3 \nu_m+m\log \nu_m+\log(1+a_r\nu_m)=\log(1+a_r)\leq D_\psi \log^3 \nu_0+\log(1+a_r).
 \]
 So for all $\nu_0\geq 1$, we can show that  
 \[
 \E_0\unit_{\calU^c}\phi(\nu_m)\nu_m^m(1+a_r\nu_m)\leq (1+a_r)\phi(\nu_0),\quad
 \E_0\unit_{\calU^c}\phi(\nu_m)\nu_m^k\leq \gamma_\phi\phi(\nu_0)\nu_0^k+1.
 \]
Therefore 
\[
\E_0 \psi(\nu_m)\leq  (1+a_r)\phi(\nu_0)+\E_0 \unit_{\calU} \psi(\nu_m),\quad
\E_0 \phi(\nu_m)\nu_m^k\leq  \gamma_\phi\phi(\nu_0)\nu_0^k+1+\E_0 \unit_{\calU} \phi(\nu_m)\nu_m^k.
\]
It remains to bound the rare event part. Since $k\leq m$, we want  to show
 $\E_0  \unit_{\calU} \phi(\nu_m)\nu_m^k\leq \E_0 \unit_{\calU} \psi(\nu_m)\nu_m \leq a_r$. Apply the Cauchy Schwarz inequality, we find 
\[
\E_0 \unit_{\calU} \psi(\nu_m)\nu_m\leq \Prob_0(\calU)^{1/2} \E_0 \psi^2(\nu_m)\nu^2_m.
\]
Since $\lambda_k\geq 1$,  $\nu_m\leq \nu_0 \prod_{k=1}^m \lambda_k$, and $(1+a_r \nu_m)\leq 2\nu_m$,
\[
\E_0 \psi^2(\nu_m)\nu_m^2\leq 4\E_0 \exp\left(2D_\psi\left(\log \nu_0 +\sum_{k=1}^m \log \lambda_k\right)^3+2(m+2)\log \nu_0 +2(m+2)\sum_{k=1}^m \log \lambda_k\right).
\]
Recall that we can bound $\lambda_k$ by a sequence of exponential random variables $x_k$, so the quantity above is bounded by 
\begin{equation}
\label{tmp:mark2}
4\E_0 \exp\left(2D_\psi\left(\log \nu_0 +\sum_{k=1}^m \log (8+x_k)\right)^3+2(m+2)\log \nu_0 +2(m+2)\sum_{k=1}^m \log (8+x_k)\right).
\end{equation}
Following a similar computational result like the one of Lemma \ref{lem:lambdapsi}, there is a constant $D'_\psi$ that bounds the right hand side for all $
\nu_0\leq \Pi$. Therefore, by upper bounds of $\Prob_0(\calU)$ as in \eqref{eqn:probU}, there is a $D''_\psi$
\[
\Prob_0(\calU)^{1/2} \sqrt{\E_0 \psi^2(\nu_m)\nu_m^2}
\leq \exp(\tfrac12D_rp-\tfrac12c_rK) D''_\psi. 
\]
When $c_rK-D_rp$ is sufficiently large, this can be further bounded by $a_r$. 
\end{proof}
\begin{lem}
\label{lem:psikk}
For any $1\leq k\leq m$, the following holds if $K-D_bp$ is sufficiently large
\[
\E_0\psi(\nu_k) \leq 1+\psi^4 (\nu_0). 
\]
\end{lem}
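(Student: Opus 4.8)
The plan is to compare the ensemble covariance $C_k$ with the stationary reference $\Rtilde_k$ through the ratio $\nu_k$ of \eqref{eqn:nudef}, and to exploit that $\nu_k$ grows only on a rare event. Applying Lemma \ref{lem:corfine} with $R'_n=\Rtilde_n$ yields the one-step recursion $\nu_{j+1}\leq\max\{1,\nu_j\lambda_{j+1}/r\}$ as in \eqref{tmp:mark1}, from which I extract two facts. Since $\lambda_j\geq 1$ always, $\nu_k\leq\nu_0\prod_{j=1}^k\lambda_j$ almost surely; and on the complement of the rare event $\calU=\cup_{j=1}^k\calU_j$, with $\calU_j$ as in \eqref{eqn:rarek}, every $\lambda_j\leq\sqrt r$, so $\lambda_{j+1}/r\leq r^{-1/2}\leq 1$ and hence $\nu_k\leq\nu_0$. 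I then split $\E_0\psi(\nu_k)=\E_0\unit_{\calU^c}\psi(\nu_k)+\E_0\unit_{\calU}\psi(\nu_k)$ and treat the two pieces separately.

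The non-rare piece is immediate: $\psi$ is increasing, so on $\calU^c$ we have $\psi(\nu_k)\leq\psi(\nu_0)$, giving $\E_0\unit_{\calU^c}\psi(\nu_k)\leq\psi(\nu_0)$. The rare piece carries the work, and I would attack it by Cauchy--Schwarz, $\E_0\unit_\calU\psi(\nu_k)\leq\Prob_0(\calU)^{1/2}(\E_0\psi^2(\nu_k))^{1/2}$, exactly as in the closing step of Lemma \ref{lem:phi}. To bound the second moment, I insert $\nu_k\leq\nu_0\prod_{j=1}^k\lambda_j$ and separate the two sources of growth via the elementary inequality $(a+b)^3\leq 4a^3+4b^3$, so that $\log^3\nu_k\leq 4\log^3\nu_0+4(\sum_j\log\lambda_j)^3$. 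Replacing each $\lambda_j$ by $8+Z_j$ with $Z_j$ exponential (Corollary \ref{cor:RMT}) and invoking Lemma \ref{lem:lambdapsi} turns the expectation of the $\lambda$-product into a finite constant $C_2$, leaving $(\E_0\psi^2(\nu_k))^{1/2}\leq 2\sqrt{C_2}\,\exp(4D_\psi\log^3\nu_0)\,\nu_0^{m+1}$. For the probability I reuse the estimate behind \eqref{eqn:probU}, namely $\Prob_0(\calU)\leq D_m(\log\nu_0+1)\exp(D_rp-c_rK)$.

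Multiplying the two bounds and dividing by $\psi^4(\nu_0)=\exp(4D_\psi\log^3\nu_0)\,\nu_0^{4m}(1+a_r\nu_0)^4$, the $\exp(4D_\psi\log^3\nu_0)$ factors cancel, the surviving power is $\nu_0^{m+1-4m}=\nu_0^{1-3m}\leq 1$ for $m\geq 1$, and the quartic factor $(1+a_r\nu_0)^4$ in the denominator dominates the polynomial $\sqrt{\log\nu_0+1}$ uniformly over $\nu_0\geq 1$. This gives $\E_0\unit_\calU\psi(\nu_k)\leq\epsilon\,\psi^4(\nu_0)$ with $\epsilon=2M\sqrt{C_2D_m}\exp(\tfrac12(D_rp-c_rK))$, which can be made arbitrarily small once $K-D_bp$ is large. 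Combining the two pieces, $\E_0\psi(\nu_k)\leq\psi(\nu_0)+\epsilon\psi^4(\nu_0)=\psi^4(\nu_0)\big(\psi(\nu_0)^{-3}+\epsilon\big)$, and since $\psi(\nu_0)\geq\psi(1)=1+a_r$ forces $\psi(\nu_0)^{-3}\leq(1+a_r)^{-3}<1$, choosing $K-D_bp$ large enough that $\epsilon\leq 1-(1+a_r)^{-3}$ pushes the bracket below $1$ and yields $\E_0\psi(\nu_k)\leq\psi^4(\nu_0)\leq 1+\psi^4(\nu_0)$.

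The main obstacle is precisely this rare-event estimate, because it must hold uniformly over all $\nu_0\geq 1$ with no help from observability (the contraction of Lemma \ref{lem:boucontract} is only available after a full window of length $m$, so unlike in Lemma \ref{lem:phi} there is no $D_\Riccati$ bound that strips off the dependence on $\nu_0$). The delicate accounting is to separate the cubic-logarithmic blow-up of the $\lambda$-product by Young's inequality and absorb it through Lemma \ref{lem:lambdapsi}, while relying on the quartic growth built into $\psi^4$ — specifically the $(1+a_r\nu)^4$ factor — to defeat the residual polynomial weight inherited from $\Prob_0(\calU)^{1/2}$; this is what makes the fourth power (rather than the first power used at time $m$) essential here.
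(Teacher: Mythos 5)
Your proposal is correct and follows essentially the same route as the paper: the recursion $\nu_{j+1}\leq\max\{1,\nu_j\lambda_{j+1}/r\}$ from Lemma \ref{lem:corfine} with $R'_n=\Rtilde_n$, the rare-event split over $\calU$, Cauchy--Schwarz on the rare part, the cube-splitting inequality $(a+b)^3\leq 4a^3+4b^3$ combined with the exponential coupling of $\lambda_j$ from Corollary \ref{cor:RMT} and the moment computation of Lemma \ref{lem:lambdapsi}, and the probability bound \eqref{eqn:probU}. The only cosmetic differences are that you bound the non-rare part by monotonicity as $\psi(\nu_0)$ rather than the paper's contraction bound $(1+a_r)\phi(\nu_0)$, and you close with the factorization $\psi(\nu_0)+\epsilon\psi^4(\nu_0)=\psi^4(\nu_0)(\psi(\nu_0)^{-3}+\epsilon)$; both choices are valid and yield the stated estimate (though your remark that the factor $(1+a_r\nu_0)^4$ is essential is a slight overstatement, since the surplus power $\nu_0^{3m-1}$ already dominates $\sqrt{\log\nu_0+1}$).
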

\begin{proof}
The proof is the same as Lemma \ref{lem:mtimesmaha}, starting from \eqref{tmp:mark1} to \eqref{tmp:mark2}, which holds for general $\nu_0$ and we replace $m$ with $k$. The only difference is that \eqref{tmp:mark2} is no longer bounded by a constant, as we no longer have $\nu_0\leq \Pi$. So instead, we bound \eqref{tmp:mark2} by the following using H\"{o}lder's inequality $(a+b)^3\leq 4a^3+4b^3$
\[
4 \exp\left(8D_\psi\log^3 \nu_0+2(k+2)\log \nu_0 \right)\E\exp\left(8D_\psi\sum_{j=1}^k \log^3 (8+x_j)+2(k+2)\sum_{j=1}^k\log (8+x_j)\right).
\]
The quantity above can  further be bounded by $\psi^8(\nu_0)D_\psi''$ for a constant $D_\psi''$ using computations like in Lemma \ref{lem:lambdapsi}. Hence 
\[
\E_0 \unit_{\calU} \psi(\nu_k)\leq \sqrt{\Prob_0(\calU) \E_0 \psi^2(\nu_k)}\leq \exp(\tfrac12D_rp-\tfrac12c_rK) \sqrt{D'_\psi}\psi^4(\nu_0).
\]
So if $K-D_bp$ is sufficiently large, $\E_0\psi(\nu_k)\leq (1+a_r)\phi(\nu_0)+\E_0 \unit_{\calU} \psi(\nu_k)$ is further bounded by $1+\psi^4(\nu_0)$ for  $a_r\leq 1/2$.  
\end{proof}
\subsection{Conclusions from previous results}
Once we combine all previous arguments, we reach a stronger version of Theorem \ref{thm:weak}:
\begin{thm}
\label{thm:strong}
Suppose system \eqref{sys:random} is uniformly observable as in Assumption \ref{aspt:obcon} and has intrinsic dimension $p$ as in Assumption \ref{aspt:lowdim}, then for any fixed $a_r>0$ sufficiently small and any fixed $D_\psi$ sufficiently large, there exists a $\bfD$, such that if $K-\bfD p$ is sufficiently large, and
\[
\phi(\nu)=\exp(D_\psi \log^3\nu ),\quad\psi(\nu)=\phi(\nu)\nu^m(1+a_r \nu),\quad \gamma_\phi=\exp(- \frac18 D_\psi\log^3r),
\]
then for any $k\in [0,m)$
\[
\E_0 \sqrt{\psi(\nu_{nm+k)}} \|e_{nm+k}\|_{C^\rho_{nm+k}} <r^{-\frac{nm}{6}}\sqrt{(\psi(\nu_0)+\psi^3(\nu_0))(\|e_0\|^2_{C^\rho_0}+2k)}
+M_n\sqrt{d}.
\]
\[
\E_0 \psi(\nu_{nm+k})\leq \gamma_\phi^n (\psi(\nu_0)+\psi^5(\nu_0))+\frac{(1+a_r)^2}{1-\gamma_\phi}.
\]
The sequence $M_n$ is given by the following with,
\[
M_n=\sum_{k=0}^n r^{\frac{(k-n)m}{6}}\sqrt{\gamma^k_\phi (1+\psi^4(\nu_0))+\frac{1+2a_r}{1-\gamma_\phi}}. 
\]
With $n\to \infty$, $M_n$ converges to a constant $\sqrt{\frac{1+2a_r}{(1-\gamma_\phi)(1-r^{-\frac{m}{6}})^2}}$. 
\end{thm}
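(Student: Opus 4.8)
\emph{Strategy.} The statement packages two assertions: a bound on the Lyapunov-weighted error $\E_0\sqrt{\psi(\nu_{nm+k})}\|e_{nm+k}\|_{C^\rho_{nm+k}}$, and a bound on the Lyapunov function $\E_0\psi(\nu_{nm+k})$ itself. Because the system is Markov and every conditional estimate in Lemmas \ref{lem:contractensemble}, \ref{lem:mtimesmaha}, \ref{lem:phi}, \ref{lem:psikk} is stated for the filtration $\{\mathcal{F}_n\}$, the plan is to analyze a single $m$-block, then iterate over blocks through the tower property, reserving one partial-block application of Lemmas \ref{lem:mtimesmaha} and \ref{lem:psikk} for the residual tail of length $k<m$. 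I would prove the Lyapunov bound first, since it controls the size of $\nu$ at each block boundary and thereby feeds the error bound.

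\emph{The Lyapunov bound.} Applying \eqref{tmp:phi} with $k=m$ and with $k=m+1$ and combining with the convexity structure of $\psi=\phi(\nu)\nu^m(1+a_r\nu)$ yields a clean per-block contraction,
\[
\E_0\psi(\nu_m)=\E_0\phi(\nu_m)\nu_m^m+a_r\E_0\phi(\nu_m)\nu_m^{m+1}\leq \gamma_\phi\psi(\nu_0)+(1+a_r)^2.
\]
Iterating this inequality across the $n$ full blocks with the tower property gives $\E_0\psi(\nu_{nm})\leq\gamma_\phi^n\psi(\nu_0)+(1+a_r)^2/(1-\gamma_\phi)$. For a non-boundary index $nm+k$ I would condition at time $nm$ and invoke Lemma \ref{lem:psikk}, $\E_{nm}\psi(\nu_{nm+k})\leq 1+\psi^4(\nu_{nm})$; the fourth power there is precisely what forces the $\psi^5(\nu_0)$ term once $\E_0\psi^4(\nu_{nm})$ is controlled by the same block iteration applied to the fourth power (legitimate since \eqref{tmp:phi} holds for the relevant exponents and $a_r,D_\psi,K$ remain free parameters).

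\emph{The weighted-error bound.} By concavity of the square root, $\E_0\sqrt{\psi(\nu_N)}\|e_N\|_{C^\rho_N}\leq\sqrt{G_N}$ with $G_N:=\E_0[\psi(\nu_N)\|e_N\|^2_{C^\rho_N}]$, so it suffices to control the squared weighted quantity. Iterating Lemma \ref{lem:contractensemble} across a block (using $\chi\leq\nu$) as in \eqref{tmp:emexp} exposes the error-propagation factor $\prod_k\nu_k\mu_k/r$ and an additive noise part scaling with $d$; weighting by $\psi(\nu_{(n+1)m})$ and separating it from this correlated factor is done through the three-way H\"older split of \eqref{tmp:rareuk}. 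On the good event $\calU^c$ the factor contracts like $r^{-m/2}$, and the cube root intrinsic to that split degrades it to $r^{-m/3}$, while the remaining $\nu^m$ and $\log\nu$ growth is swallowed by $\phi(\nu)=\exp(D_\psi\log^3\nu)$ via Lemma \ref{lem:lambdapsi} and the moment bounds of Corollary \ref{cor:RMT}. A one-time absorption of the initial $\nu_0$-transient (estimates of the type $\nu_0^m r^{-m/6}\leq\psi^2(\nu_0)$) promotes $\psi(\nu_0)$ to $\psi^3(\nu_0)$, whereas the Lyapunov bound of the previous paragraph keeps $\E_0\psi(\nu_{km})$ bounded so that subsequent blocks do not inflate the power further. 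A variation-of-constants accounting then gives
\[
G_{nm}\leq r^{-\frac{nm}{3}}(\psi(\nu_0)+\psi^3(\nu_0))\|e_0\|^2_{C^\rho_0}+d\sum_{k=0}^{n} r^{-\frac{(n-k)m}{3}} c_k,
\]
where each injected-noise coefficient $c_k$ is bounded through the Lyapunov estimate; taking the square root with $\sqrt{a+b}\leq\sqrt a+\sqrt b$ and $\sqrt{\sum}\leq\sum\sqrt{\,\cdot\,}$ yields the rate $r^{-nm/6}$ and converts the noise sum into exactly the convolution $M_n$. The residual tail of length $k$ is appended by one further application of Lemmas \ref{lem:mtimesmaha} and \ref{lem:psikk} conditioned at time $nm$, which supplies the $+2k$ inside the square root.

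\emph{Main obstacle.} The delicate step is the single-block squared-weighted estimate: decoupling the strongly correlated quantities $\psi(\nu_{(n+1)m})$ and $\prod_k\nu_k\mu_k/r$ by H\"older while simultaneously (i) preserving the genuine contraction $r^{-m/3}$ that survives from the good-event factor $r^{-m/2}$ after the cube root, (ii) guaranteeing that the super-exponential weight dominates every polynomial and logarithmic factor produced on the rare events, and (iii) ensuring the one-time transient inflates the weight only to $\psi^3$ rather than accumulating across blocks. This is exactly why $a_r$ must be taken small, $D_\psi$ large, and $K-\bfD p$ large, and why the computational Lemma \ref{lem:lambdapsi} together with Corollary \ref{cor:RMT} is needed to close all the moment bounds.
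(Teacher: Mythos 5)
Your high-level architecture (block decomposition, Lyapunov contraction via \eqref{tmp:phi} with $s=m$ and $s=m+1$, Cauchy--Schwarz/H\"older decoupling, rare-event moment control) matches the paper's, and your full-block Lyapunov iteration $\E_0\psi(\nu_{nm})\leq\gamma_\phi^n\psi(\nu_0)+(1+a_r)^2/(1-\gamma_\phi)$ is exactly the paper's argument. But there is a genuine gap in how you handle the residual $k<m$ steps, and it propagates into your error bound. You place the partial block at the \emph{end} ($0\to nm\to nm+k$), so Lemma \ref{lem:psikk} applied at time $nm$ forces you to control $\E_0\psi^4(\nu_{nm})$, and the weighted-error step at time $nm$ similarly forces quantities like $\E_0\bigl[\psi^{2}(\nu_{nm})\,\|e_{nm}\|_{C^\rho_{nm}}\bigr]$ --- but your $n$-block iteration only controls the \emph{first} power $\E_0\psi(\nu_{nm})$ and the weight $\sqrt{\psi(\nu_{nm})}$ on the error. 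Your justification that ``\eqref{tmp:phi} holds for the relevant exponents'' is false as stated: \eqref{tmp:phi} is proved only for $\nu^k\phi(\nu)$ with $k\leq m+1$ and the fixed $\phi$, whereas $\psi^4$ involves $\phi^4\nu^{4m}(1+a_r\nu)^4$, i.e.\ a different Lyapunov function whose per-block contraction would have to be re-derived from scratch (redoing Lemma \ref{lem:phi} with $D_\psi\to 4D_\psi$, new rare-event moment bounds from Corollary \ref{cor:RMT}, a new admissible range of $K-\bfD p$). That is a substantive piece of missing work, not a free consequence of the parameters being adjustable. The paper avoids this entirely by reversing the order: the $k$ residual steps come \emph{first} ($0\to k\to nm+k$), so Lemma \ref{lem:psikk} lands the fourth power on the deterministic initial value, $\E_0\psi(\nu_k)\leq 1+\psi^4(\nu_0)$, and only first powers of $\psi$ are ever propagated through blocks; this ordering is precisely what produces the $\psi^4(\nu_0)$ inside $M_n$ and the $\psi^3(\nu_0)$, $\psi^5(\nu_0)$ terms in the statement.

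A second, related gap is your reduction to the joint quantity $G_N=\E_0[\psi(\nu_N)\|e_N\|^2_{C^\rho_N}]$. The block estimate \eqref{tmp:emexp} is a bound on the conditional expectation $\E_0\|e_n\|^2_{C^\rho_n}$; you cannot ``weight it by $\psi(\nu_{(n+1)m})$'' by multiplying inside, since $\psi(\nu_{(n+1)m})$ is not $\mathcal{F}_0$-measurable and is correlated with every factor in the recursion. Making this legitimate requires a refined conditioning argument (the error recursion of Lemma \ref{lem:contractensemble} holds pathwise in the ensemble-noise variables, conditionally on which $\nu_j,\mu_j,\chi_j$ are measurable and the system noises are still independent), followed by H\"older bounds that again demand moments $\E_0\psi^q(\nu_m)$, $q>1$, beyond what Lemma \ref{lem:phi} supplies. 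The paper sidesteps both issues with a single per-block Cauchy--Schwarz at the level of $\bigl(\E_0\sqrt{\psi(\nu_m)}\|e_m\|_{C^\rho_m}\bigr)^2\leq \E_0\psi(\nu_m)\cdot\E_0\|e_m\|^2_{C^\rho_m}$, so that Lemma \ref{lem:phi} and Lemma \ref{lem:mtimesmaha} are applied to two \emph{separate} expectations and the gain of $\nu_0^{m}(1+a_r\nu_0)$ in \eqref{tmp:psi} (which bounds $\E_0\psi(\nu_m)$ by $\phi(\nu_0)$ alone) exactly cancels the $\nu_0^m$ loss in the error iteration, keeping the weight $\sqrt{\psi}$ invariant from block to block. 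Your route is not unsalvageable --- the cubic exponent in $\phi$ can indeed swallow the polynomial mismatches, as you note --- but as written the proof does not close with the lemmas available.
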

Before we show our proof, it is not difficult to go from Theorem \ref{thm:strong} to Theorem \ref{thm:weak}, as $F(C_0)$ can be chosen as 
\[F(C_0)=r^{\tfrac{m}{6}}\sqrt{\psi(\nu_0)+\psi^5(\nu_0)}, 
\]
where $\nu_0$ is defined as in \eqref{eqn:nudef}
 and  $M_n$ in both theorems are the same. So the bound for $\E \|e_n\|_{C_n^\rho}$ comes immediately from $\psi\geq 1$. Then because $\psi(\nu)\geq \nu$,  $\Rtilde^\rho_n\preceq \nu_n C_n^\rho$ and $\|\Rtilde_n^\rho\|\leq D_R+\rho$
 \[
\E|e_n|\leq  \sqrt{D_R+\rho}\E\|e_n\|_{\Rtilde^\rho_n}\leq \sqrt{D_R+\rho}\E\sqrt{\nu_n}\|e_n\|_{\Rtilde^\rho_n}\leq \E\sqrt{\psi(\nu_n)}\|e_n\|_{\Rtilde^\rho_n},
 \]
 we have the bound of $\E|e_n|$. The dominance of $C_n$ over  $\Rtilde_n$ \eqref{eqn:cnbound} can be derived from the bound of $\E \psi(\nu_{nm+k})$, as in $D_\phi$ can be chosen as arbitrarily large, and $a_r$ arbitrarily close to $0$. 
 
\begin{proof}[Proof of Theorem \ref{thm:strong}]
We will pick parameters so that Lemmas \ref{lem:mtimesmaha}-\ref{lem:psikk} all hold.  First, we consider the case with $k=0$. By Cauchy Schwarz, 
\begin{align*}
&(\E_0\sqrt{\psi(\nu_m)} \|e_m\|_{C_m^\rho})^2
\leq \E_0 \psi(\nu_m)\E_0 \|e_m\|^2_{C^\rho_m}\\
&\leq (1+2a_r)(r^{-\frac{m}{2}}+b_r(1+\log\nu_0))\nu_0^m\phi(\nu_0)\|e_0\|^2_{C^\rho_0}
+2m(1+2a_r)(1+b_r(1+\log\nu_0))\nu_0^m\phi(\nu_0).
\end{align*}
For sufficiently small $a_r, b_r$, because $\log\nu_0\leq \nu_0-1$
\[
(1+2a_r)(r^{-\frac{m}{2}}+b_r(1+\log\nu_0))\leq r^{-\tfrac m 3}(1+a_r\nu_0)^2,
\quad (1+2a_r)(1+b_r(1+\log\nu_0))\leq 2(1+a_r\nu_0)^2. 
\] 
Then by $\sqrt{a+b}\leq \sqrt{a}+\sqrt{b}$, 
\[
\E_0\sqrt{\psi(\nu_m)} \|e_m\|_{C_m^\rho}
\leq r^{-\frac{m}{6}}\sqrt{\psi(\nu_0)}\|e_0\|_{C_0^\rho}+\sqrt{2m\psi(\nu_0)}. 
\]
Using Markov property, we can iterate this inequality $n$ times using Gronwall's inequality
\[
\E_0 \sqrt{\psi(\nu_{nm})} \|e_{nm}\|_{C_{nm}^\rho}
\leq r^{-\frac{mn}{6}}\sqrt{\psi(\nu_0)}\|e_0\|_{C_0^\rho}+\sum_{j=1}^n r^{-\frac{m(n-j)}{6}} \E_0 \sqrt{2m\psi(\nu_{m(j-1)})}
\]
Note that by Gronwall's inequality, the second claim of Lemma \ref{lem:phi} indicates that
\[
\E_0 \phi(\nu_{mj})\nu_{mj}^s\leq \gamma_\phi^j\phi(\nu_0)\nu_{0}^s+\frac{1+a_r}{1-\gamma_\phi}.
\]
The bound for $\E \psi(\nu_{nm})$ comes from this, as a sum of $s=m$ and $s=m+1$. Combining both estimates of  Lemma \ref{lem:phi} using Cauchy Schwarz, we find that 
\[
\E_0 \sqrt{\psi(\nu_{m(j-1)})}
\leq \E_0 \sqrt{\E_{m(j-2)} \psi(\nu_{mj})}\leq \sqrt{\E_0(1+2a_r)\phi(\nu_{m(j-2)})}
\leq \sqrt{\gamma_\phi^j\phi(\nu_0)+\frac{1+a_r}{1-\gamma_\phi}}. 
\]
In summary we have our claims for $k=0$.

For nonzero $k<m$, by Markov property, the previous results indicates that 
\begin{equation}
\label{tmp:k1}
\E_k \sqrt{\psi(\nu_{nm+k})} \|e_{nm+k}\|_{C_{nm+k}^\rho}
\leq r^{-\frac{mn}{6}}\sqrt{\psi(\nu_k)}\|e_k\|_{C_k^\rho}+\sum_{j=1}^n r^{-\frac{m(n-j)}{6}}  \sqrt{\gamma_\phi^j\phi(\nu_k)+\frac{1+a_r}{1-\gamma_\phi}}
\end{equation}
\begin{equation}
\label{tmp:k2}
\E_k \psi(\nu_{mn+k})\leq \gamma_\phi^n\psi(\nu_k)+\frac{(1+a_r)^2}{1-\gamma_\phi}.
\end{equation}
Then by Cauchy Schwarz, Lemma \ref{lem:psikk} and that $\psi\geq \phi\nu^m\geq 1$
\begin{align*}
&(\E_0\sqrt{\psi(\nu_k)} \|e_k\|_{C_k^\rho})^2
\leq \E_0 \psi(\nu_k)\E_0 \|e_k\|^2_{C^\rho_k}\\
&\leq (1+\psi^4(\nu_0))\left((1+(1+\log\nu_0))\nu_0^k\phi(\nu_0)\|e_0\|^2_{C^\rho_0}
+2k(1+(1+\log\nu_0))\nu_0^k\phi(\nu_0)\right)\\
&\leq (1+\psi^4(\nu_0))\phi(\nu_0)(1+\nu_0)\nu_0^k(\|e_0\|^2_{C^\rho_0}+2k)
\leq 2(1+\psi^4(\nu_0))\psi(\nu_0)(\|e_0\|^2_{C^\rho_0}+2k). 
\end{align*}
Likewise by Lemma \ref{lem:psikk} we have $\E_0\psi(\nu_k)\leq 1+\psi^4(\nu_0)$, and
\[
\E_0\sqrt{\gamma_\phi^j\phi(\nu_k)+\frac{1+a_r}{1-\gamma_\phi}}
\leq \sqrt{\gamma_\phi^j(1+ \psi^4(\nu_0))+\frac{1+a_r}{1-\gamma_\phi}}.
\]
Plug these bounds into \eqref{tmp:k1} and \eqref{tmp:k2}, we have our claim. 
\end{proof}

\begin{proof}[Proof of Corollary \ref{cor:expstable}]
Let $U_{n,n_0}=\prod_{k=n_0}^{n-1}(I-\Khat_{k+1}H_k)A_k$. By iterating \eqref{tmp:pin} $n$ times, we find that
\[
\|C_n^\rho\|^{-1}U^T_{n,0}U_{n,0}\preceq U_{n,0}^T[C_n^\rho]^{-1}U_{n,0}\preceq\left( \prod_{k=1}^n\nu_k\mu_k/r\right)[C_{0}^\rho]^{-1}. 
\]
Taking spectral norm on both hand side yields $\|U_{n,0}\|\leq D_R\|[C_{0}^\rho]^{-1}\|\nu_n \prod_{k=1}^n\nu_k\mu_k/r$.

Recall that 
\[
\E_{nm} \sqrt{\psi(\nu_{(n+1)m})\prod_{k=nm+1}^{(n+1)m} \nu_k\mu_k/r}\leq \sqrt{(1+a_r )\phi(\nu_{nm})\nu^m_{nm}(r^{-\frac{m}{2}}+b_r\nu_{nm})}\leq r^{-\frac{m}{6}} \sqrt{\psi(\nu_{nm})}
\]
Therefore by iterative conditioning
\[
\E \sqrt{\psi(\nu_{(n+1)m+k})\prod_{j=1}^{(n+1)m+k} \nu_j\mu_j/r}
\leq r^{-\frac{m}{6}}\E\sqrt{\psi(\nu_{nm+k})\prod_{j=1}^{nm+k} \nu_j\mu_j/r}\leq \cdots\leq r^{-\frac{m(n+1)}{6}}\E \sqrt{\psi(\nu_k)}. 
\]
Apply Lemma \ref{lem:psikk}, $\E\sqrt{\psi(\nu_j)}$ can be bounded. Then notice that $\psi\geq 1$ yields our claim.
\end{proof}

\section{Concentration of noncentral random matrix}
\label{sec:RMT}
Random matrix theory (RMT) is one of the fastest growing branches of probability theory in recent years. Yet, most of the RMT results concern of the spectrum of a matrix $X$ where each column is i.i.d. with mean zero. For our application, each column of the  forecast ensemble spread matrix has its own non-central distribution, and we concern more of its ratio with respect to its expectation. It is of this reason, we have to develop the following result for EnKF. Fortunately, classic RMT arguments like $\epsilon$-net and Gaussian concentration are still valid for our purpose.
\par
The EnKF augmentations are crucial for such a result to hold. The additive inflation makes sure the matrix inversion is non-singular. The multiplicative inflation creates an important room so the concentration can fit in with high probability. The spectral projection makes sure the rank of the matrices are at most $p$.
\begin{thm}
\label{thm:RMT}
Let $\{a_k\}_{k\leq K}$ be $K$ vectors in $\reals^d$, and $\{\xi_k\}_{k\leq K}$ be $K$ i.i.d. $\mathcal{N}(0, \Sigma)$ random vectors in $\reals^d$. Denote $\Delta \xi_k=\xi_k-K^{-1}\sum_{j=1}^K \xi_j$, and 
\[
C=\frac{1}{K-1}\sum_{k=1}^K a_k\otimes a_k,\quad D=C+\Sigma, \quad Z=\frac{1}{K-1}\sum_{k=1}^K (a_k+\Delta \xi_k)\otimes (a_k+\Delta\xi_k).
\]
Clearly  $\E Z=D$. Fixed any $\rho>0$, denote the condition number of $C+\rho I_d$ as $\mathcal{C}$, and 
\[
\mu=\inf\{r\geq 0: [Z+\rho I_d]^{-1}\preceq r[D+\rho I_d]^{-1} \},\quad
\lambda=\inf\{r\geq 0: Z\preceq r [D+\rho I_d]\}, 
\]
Suppose that rank$([a_1,\ldots, a_K])\leq p$, rank$(\Sigma)\leq p$. For any fixed $\delta>0$, there are constants $D_\delta $ and $c_\delta $ such that when $K>D_\delta/c_\delta p$, 
\begin{itemize}
\item With high probability, both $\lambda$ and  $\mu$ are close to $1$: for the event $\mathcal{U}:=\{\lambda>1+5\delta\text{ or } \mu>1+5\delta \}$, 
\[
\Prob(\mathcal{U})\leq  (\log \mathcal{C}+1)  \exp(D_\delta  p-c_\delta  K).
\]
\item In the complementary set, $\mu$ is controllable through its moment. Suppose that $\rho\leq \|\Sigma\|$, then for any fixed number $n$, there is a constant $C_n$
\[
\E \unit_{\mathcal{U}}\mu^n\leq C_n \rho^{-n}\|\Sigma\|^n (\log \mathcal{C}+1) \exp(D_\delta  p-c_\delta K). 
\]
And $\lambda$ is controllable through its tail: when $K\geq D_\delta  p$, so any $t>0$
\[
\Prob (\lambda>8+t)\leq \exp(-c_\delta K t). 
\]
\end{itemize}
\end{thm}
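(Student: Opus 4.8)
The plan is to exploit that every matrix in sight is supported on a subspace of dimension at most $2p$, reduce the two spectral comparisons to a single operator-norm concentration of a normalized matrix whose mean is the identity, and feed that to an $\epsilon$-net argument whose pointwise input is the concentration of a (noncentral) chi-squared variable. First I would reduce the dimension: since $a_k\in\mathrm{span}\{a_1,\dots,a_K\}=:V_a$ and $\xi_k\in\mathrm{range}(\Sigma)$ almost surely, every $a_k+\Delta\xi_k$ lies in $V:=V_a+\mathrm{range}(\Sigma)$, with $\dim V\le 2p$. Hence $Z,C,\Sigma,D$ are all block-diagonal for $\reals^d=V\oplus V^\perp$ and vanish on $V^\perp$. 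Introduce
\[
\widetilde W=(D+\rho I_d)^{-1/2}(Z+\rho I_d)(D+\rho I_d)^{-1/2},\qquad \E\widetilde W=I_d,
\]
which is likewise block-diagonal and equals $I_d$ on $V^\perp$, so $\|\widetilde W-I_d\|$ is attained on $V$. A direct computation gives $\lambda\le\lambda_{\max}(\widetilde W)$ (from $Z\preceq Z+\rho I_d\preceq\lambda_{\max}(\widetilde W)(D+\rho I_d)$) and $\mu=1/\lambda_{\min}(\widetilde W)$. Thus it suffices to show $\|\widetilde W-I_d\|\le c$ on the good event, as this forces $\lambda\le 1+c$ and $\mu\le(1-c)^{-1}$; taking $c$ a suitable multiple of $\delta$ (e.g. $c=4\delta$, $\delta\le\tfrac1{25}$) lands both inside $1+5\delta$.

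Next I would discretize: cover the unit sphere of $V$ by an $\epsilon$-net $\mathcal N$ with $|\mathcal N|\le(1+2/\epsilon)^{2p}=\exp(O(p))$, so $\|\widetilde W-I_d\|\le(1-2\epsilon)^{-1}\sup_{w\in\mathcal N}|w^T(\widetilde W-I_d)w|$. For fixed $w$ put $v=(D+\rho I_d)^{-1/2}w$, so $v^T(D+\rho I_d)v=1$, and set $\beta_k=v^Ta_k$, $\gamma_k=v^T\Delta\xi_k$, $s_v^2=v^T\Sigma v\le 1$. Then
\[
w^T(\widetilde W-I_d)w=v^T(Z-D)v=\frac{2\beta^T\gamma+|\gamma|^2}{K-1}-s_v^2 .
\]
Writing $\gamma=s_v\,\Pi z$ with $\Pi=I_K-\tfrac1K\mathbf 1\mathbf 1^T$ and $z\sim\mathcal N(0,I_K)$ exhibits $|\gamma|^2$ as $s_v^2$ times a $\chi^2_{K-1}$ variable (noncentrality governed by $\Pi\beta$), while $\beta^T\gamma$ is mean-zero Gaussian of variance $O(s_v^2/(K-1))$. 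Gaussian Lipschitz concentration for $|\gamma|$ (or the Laurent--Massart bounds) then yields, uniformly in $w$, $\Prob\big(|w^T(\widetilde W-I_d)w|>t\big)\le 2\exp(-c_\delta K\min\{t,t^2\})$, since $s_v^2\le 1$ keeps the fluctuation scale $O(1/\sqrt K)$ in every direction. A union bound over $\mathcal N$ gives the first claim; the prefactor $\log\mathcal C+1$ is produced by stratifying the directions $v$ according to the dyadic band of $v^TCv/\rho$ they occupy, there being $O(\log\mathcal C)$ such bands, and union-bounding the per-band estimates.

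For the two refined statements I would reuse this pointwise input, read differently. The tail $\Prob(\lambda>8+t)$ comes from the sub-exponential (large-$s$) regime $\Prob(w^T\widetilde W w>1+s)\le\exp(-c_\delta K s)$ combined with the net: the cardinality $\exp(O(p))$ is absorbed once $K\ge D_\delta p$, the crude constant $8$ covering the baseline net-versus-concentration trade-off while the excess $t$ survives as $\exp(-c_\delta Kt)$. For the moment bound on $\mu$ I would first record the deterministic ceiling $\mu=1/\lambda_{\min}(\widetilde W)\le 1+\|D\|/\rho$, valid because $Z\succeq 0$ gives $\widetilde W\succeq\rho(D+\rho I_d)^{-1}$; the smallest Rayleigh quotient is achieved in $\mathrm{range}(\Sigma)$-type directions, where $Z$ reduces essentially to the sample covariance of the Gaussians, so the governing scale is $\|\Sigma\|/\rho$ rather than $\|D\|/\rho$. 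Multiplying this ceiling (to the $n$-th power) by $\Prob(\mathcal U)$ and again summing over the $O(\log\mathcal C)$ bands yields $C_n\rho^{-n}\|\Sigma\|^n(\log\mathcal C+1)\exp(D_\delta p-c_\delta K)$.

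I expect the main obstacle to be concentration that is genuinely uniform across the full spectral range of $C+\rho I_d$: the columns $a_k+\Delta\xi_k$ are neither centered nor identically distributed, and the noncentrality $|\Pi\beta|^2/s_v^2$ of the governing chi-squared varies over many scales as $v$ sweeps the eigendirections of $C$. Controlling this \emph{without} constants that degrade with $\|C\|/\rho$ is exactly what forces the dyadic-scale decomposition and is the source of the $\log\mathcal C$ factor. The other delicate point is the lower-tail (small-ball) control of $\lambda_{\min}(\widetilde W)$ needed for the $\mu$ moments, since it asserts that the sample Gaussian covariance does not collapse in any of up to $2p$ directions at once.
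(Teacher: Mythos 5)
Your whitening reduction handles the first claim correctly, and by a genuinely different route than the paper. Setting $\widetilde W=(D+\rho I_d)^{-1/2}(Z+\rho I_d)(D+\rho I_d)^{-1/2}$, checking that $\widetilde W-I_d$ is supported on the at most $2p$-dimensional space $V$, and observing $\lambda\le\lambda_{\max}(\widetilde W)$, $\mu=1/\lambda_{\min}(\widetilde W)$ are all sound. So is the pointwise input: with $v=(D+\rho I_d)^{-1/2}w$ one has $v^TCv+s_v^2+\rho|v|^2=1$ where $s_v^2=v^T\Sigma v$, so the cross term is Gaussian with variance at most $4s_v^2(v^TCv)/(K-1)\le (K-1)^{-1}$, and the quadratic term is $s_v^2(\chi^2_{K-1}/(K-1)-1)$ with $s_v^2\le 1$; hence $\Prob(|w^T(\widetilde W-I_d)w|>t)\le 4\exp(-cK\min\{t,t^2\})$ uniformly in $w$. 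A Euclidean $(1/4)$-net of the unit sphere of $V$ (cardinality $9^{2p}$), a union bound, and the standard bootstrap then give $\Prob(\mathcal U)\le \exp(D_\delta p-c_\delta K)$, and the sub-exponential regime of the same pointwise bound gives the $\lambda$-tail. Note that this is \emph{stronger} than the stated bound: no $(\log\mathcal C+1)$ factor appears, and your proposed dyadic stratification to manufacture one is superfluous for this part. The paper, by contrast, works with un-whitened quotients $f_{u,w}$, which are stable only under perturbations small relative to the $|S'\cdot|$-norm; this is what forces its $S$-relative $\epsilon$-net (Lemma \ref{lem:epsilonnet}), whose cardinality is the sole source of $\log\mathcal C$. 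Your whitening absorbs that difficulty, which is a real simplification here.

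The genuine gap is the moment bound $\E\unit_{\mathcal U}\mu^n\le C_n\rho^{-n}\|\Sigma\|^n(\log\mathcal C+1)\exp(D_\delta p-c_\delta K)$. Your deterministic ceiling is $\mu\le 1+\|D\|/\rho$, and $\|D\|=\|C+\Sigma\|$ can be arbitrarily large compared with $\|\Sigma\|$; multiplying this ceiling to the $n$-th power by $\Prob(\mathcal U)$ yields $\|D\|^n$, not $\|\Sigma\|^n$, so the claimed estimate does not follow. The sentence ``the smallest Rayleigh quotient is achieved in $\mathrm{range}(\Sigma)$-type directions'' is precisely the quantitative statement that must be proved, and it is not deterministic: a strong-signal direction can minimize $w^T\widetilde Ww$, only with probability that decays in that direction's signal strength. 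What is required (and what the paper's Steps 3--5 supply) is a per-net-point pairing of a \emph{local} ceiling, $U_i\le\rho^{-1}\|\Sigma\|(1+\tfrac{\delta}{2})((K-1)+|S'u_i|^2)$ --- the factor $\|\Sigma\|$ entering because $|\Theta^Tu|^2\ge\|\Sigma\|^{-1}$, and the ceiling involving the point's own signal strength rather than $\|C\|$ --- with a local failure probability $\Prob(\mathcal D_i\cup\mathcal A_{i,j}\cup\mathcal B_{i,j})\le D_\delta\exp(-c_\delta K-c_\delta|S'u_i|^2)$, whose extra decay in $|S'u_i|^2$ (obtained via the rotation argument and Hanson--Wright, equivalently a noncentral-$\chi^2$ lower tail with noncentrality of order $(K-1)v^TCv/s_v^2$) compensates the growth of $U_i^n$; one then maximizes the product over $|S'u_i|$. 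There is also a secondary hole: inside $\mathcal U$ you cannot pass from the net to the continuum via the bootstrap, since $\|\widetilde W-I_d\|$ is no longer small there; the paper's relative-net perturbation inequalities are built exactly for this, and your proposal has no substitute.
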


\begin{proof}
\textbf{Step 1: spectral projection.} 
In our discussion below, without lost of generality, we assume $\delta$ is a small positive number, so the following simplified estimation holds
\[
(1+b\delta)(1+a\delta)\leq \frac{1+b\delta}{1-a \delta}\leq 1+\min\{a+b+1, 2(a+b)\}\delta
\]
 for all $a,b\in [0,4]$.  Now let 
\[
F_v:=\frac{v^T(D+\rho I_d) v}{v^T(Z+\rho I_d)v},\quad G_v:=\frac{vZv^T}{v(D+\rho I_d)v^T}. 
\]
Then clearly
\[
\mu=\sup_{v\in \reals^d}F_v,\quad \lambda=\sup_{v\in \reals^d}G_v. 
\]
\par
 Next, let $\mathcal{P}$ be the linear sum of the column space of $\Sigma$ and the linear space spanned  by $\{a_1,\ldots, a_K\}$. By our condition, $\mathcal{P}$ has dimension at most $2p$. Denote the projection of $v$ to $\mathcal{P}$ as $v'$, and the residual as $v_\bot$. Note that $\langle a_k, v_\bot\rangle=0$ and $\langle \Delta\xi_k, v_\bot\rangle=0$ a.s., 
 \[
 F_v=\frac{\frac{1}{K-1}\sum \langle a_k, v'\rangle^2 + v'^{T} \Sigma v'+\rho |v'|^2+\rho|v_\bot|^2}{
 \frac{1}{K-1}\sum \langle a_k+\Delta \xi_k, v'\rangle^2 + \rho |v'|^2+\rho |v_\bot|^2}
 \quad a.s.,
 \]
 \[
 G_v=\frac{\frac{1}{K-1}\sum \langle a_k+\Delta \xi_k, v'\rangle^2 }{\frac{1}{K-1}\sum \langle a_k, v'\rangle^2 + v^{'T} \Sigma v'+\rho |v'|^2+\rho |v_\bot|^2}\geq G_{v'}\quad a.s.. 
 \]
One elementary fact is that if  $a,b,c$ are nonnegative real numbers. 
\begin{equation}
\label{eqn:quotient}
\frac{a+b}{a+c}\leq \max\left\{1,\frac{b}{c}\right\}.
\end{equation}
 As a consequence $F_v\leq \max\{1, F_{v'}\}$. Since we do not concern about the part of $\mu$ that is below $1$, we can focus on $v\in \mathcal{P}$. Moreover, because $F_v$ is invariant under renormalization, we can focus on $|v|=1$.\\
\noindent \textbf{Step 2: two $\epsilon$-nets.} In order to parameterize  $v\in \mathcal{P}$, let $\Psi\Lambda \Psi^T$ be the orthogonal decomposition of $\Sigma$, where $\Psi$ is a $d \times p $ matrix, and $\Lambda$ is $p\times p$ with strictly positive diagonal entries.  Let $\Theta= \Lambda^{-1/2}\Psi^T$. The set $\{\Theta^T u, u\in \mathcal{S}^{p-1}\}=\{v\in \mathcal{P},|v|=1\}$, where $\mathcal{S}^{p-1}=\{u\in \reals^p, |u|=1\}$ is the $p-1$ dimensional sphere. With a transformation  through $\Theta$, we denote the spread matrices consist of the dynamical forecast and the system noise as follows
\[
S=[\Theta a_1, \ldots, \Theta a_K]^T,\quad S'=(S^TS+(K-1)\rho\Theta\Theta^T)^{1/2}, 
\]
and also the random matrices
\[
T=[\Theta \xi_1,\ldots, \Theta \xi_K]^T,\quad T'=[\Theta \Delta\xi_1,\ldots, \Theta \Delta\xi_K]^T=Q_KT,\quad Q_K=I_K-K^{-2}\vec{1}_K\otimes \vec{1}_K. 
\]
$\vec{1}_K$ is the $K$ dimensional vector with all entries being $1$. Note that $\|Q_K\|\leq 1$.  One important fact will be exploited in later estimation is that $T$ is a $K\times p$ random matrix with i.i.d. $\mathcal{N}(0, 1)$ entries. Also note with these notations, 
\[
F_{\Theta^Tu}=\frac{|S' u|^2 +(K-1)}{
\sum_{k=1}^K( [S u]_k+[T' u]_k)^2  + (K-1)\rho |\Theta^T u|^2}, \quad G_{\Theta^T u}=\frac{ |S u+T'u|^2}{|S' u|^2+(K-1)},
\]
where $[x]_k$ denotes the $k$-th coordinate of a vector $x$. Then $F_{\Theta^Tu}= f_{u,u}, G_{\Theta^T u}= g_{u,u} $, where
\[
f_{u,w}:=\frac{|S' u |^2 +(K-1)}{
 \sum_{k=1}^K  ([S u]_k +[T' w]_k)^2+(K-1)\rho |\Theta^T u|^2},\quad
g_{u,w}:=\frac{ |S u +T' w|^2}{|S' u |^2 +(K-1)}.
\]
Like many other random matrix problems, $f_{u,u}, g_{u,u}$ are easy to estimate for a fixed $u$, but difficult to estimate over all $u\in \mathcal{S}^{p-1}$. The general solution to such problem is finding proper $\epsilon$-nets, where $\epsilon$ is a very small positive number to be fixed later. Here we need two. 
\par
Pick a group points $\{w_j\in \reals^p| j\in J,  |w_j|=1\}$ as an $\epsilon$-net for the $p-1$ dimensional sphere $\mathcal{S}^{p-1}$. In other words, 
$\mathcal{S}^{p-1}\subset\cup_j B_{\epsilon}(w_j)$, where $B_r (x)$ is a ball of radius $r$ around point $x$.  By Lemma 5.2 \cite{Ver11} , the cardinality of this net is bounded by $|J|\leq (1+\frac{2}{\epsilon})^p$.
\par
As we are dealing with another matrix $S'$, we need a second net $\{u_i\in \reals^p|i\in I\}$ generated by the norm $|S' \cdot |$, so
$\mathcal{S}^{p-1}\subset\cup_i B^{S}_{\epsilon}(u_i)$. Here 
\[
u\in B^{S}_{\epsilon}(u_i)\quad \text{if} \quad |S' (u-u_i)|\leq \epsilon |S' u_i|.
\] 
By Lemma \ref{lem:epsilonnet}, this set has cardinality 
\[
|I|\leq 4(1+4\epsilon^{-1})^pp(\log\mathcal{C}'+1),
\] with $\mathcal{C}'$ being the condition number of $S'$. 
\par
Note that 
\[
F(\xi_1,\ldots,\xi_K)\leq \sup_{u\in \mathcal{S}^{p-1}} \max\{F_{\Theta^Tu},1\}\leq \max_{i\in I,j\in J}\left\{ \sup_{u\in B^{S}_\epsilon (u_i)} \sup_{w\in B_{\epsilon}(w_j)} f_{u,w}, 1\right\}.
\]
To continue, we will find simpler bounds for $F_{\Theta^T u}, G_{\Theta^T u}$ when $u\in B^{S} (u_i), u\in B_{\epsilon}(w_j)$. This discussion will be done separately for two different scenarios.

\noindent\textbf{Step 3: $|S' u_i|\leq \frac{1}{2}\delta\sqrt{ K-1}$}. In this case, the nominator of $f_{u,w}$ is bounded by 
\[
(1+\epsilon)^2|S' u_i|^2+(K-1)\leq (K-1)(1+\tfrac{1}{2}\delta^2). 
\]
and denominator of $g_{u,u}$ bounded from below by $K-1$. As for the denominator of $f_{u,u}$, and nominator of $g_{u,u}$,   $|T' w_j|^2$ makes a good approximation. In specific, because   $|T'(w_j-w)|\leq \epsilon \|T'\|$ and 
\[ 
|Su|\leq |S'u|\leq |S'(u-u_i)|+|S' u|\leq \frac{1+\epsilon}{2}\delta\sqrt{K-1}.
\]
By Cauchy Schwarz , 
\begin{align*}
\sum_{k=1}^K  ([S u]_k +[T' w]_k)^2 &=\sum_{k=1}^K  ([T' w_j]_k+[T'(w-w_j)]_k+[S u]_k)^2\\
&\geq  \sum_{k=1}^K [T' w_j]_k^2-2[T' w_j]_k([T'(w-w_j)]_k+[S u]_k)\\
&\geq  |T' w_j|^2-2\sqrt{\sum_k|T' w_j|^2_k}\sqrt{2\sum_{k} (|T'(w-w_j)|^2_k+[S u]_k^2)}\\
&\geq |T' w_j|^2-4|T' w_j|(\epsilon \|T'\|+\frac{1+\epsilon}{2}\delta\sqrt{K-1}).
\end{align*}
Likewise, the nominator of $g_{u,w}$ is bounded by 
\begin{align*}
|Su+T'w|^2 &\leq  \left(|T' w_j|+|T'(w-w_j)|+|S u|\right)^2\\
&\leq (|T' w_j|+\epsilon \|T'\|+\tfrac{1+\epsilon}{2}\delta\sqrt{K-1})^2.
\end{align*}
Recall that $T w_j\sim \mathcal{N}(0, I_k)$, so $|T' w_j|=|Q_K T w_j|\approx \sqrt{K-1}$ by concentrations of Gaussian variables, then the quantity above can be bounded. In particular, let us consider the events of large deviations: 
\[
\mathcal{D}=\{\omega: \|T'\|\geq \tfrac{\delta(1-\epsilon)}{2\epsilon}\sqrt{ (K-1)} \},\quad \mathcal{A}_{i,j}=\left\{\omega: |T' w_j|\leq \sqrt{\frac{K-1}{1+\frac{1}{2}\delta} } \text{ or }|T' w_j|\geq(1+ \tfrac{\delta}{2})\sqrt{K-1}\right\}.
\]
Then in the canonical event $(\mathcal{D}\cup \mathcal{A}_{i,j})^c$, 
\[
(1-3\delta)(K-1)\leq \sqrt{\frac{K-1}{1+\frac{1}{2}\delta}}(\sqrt{\frac{K-1}{1+\frac{1}{2}\delta}}-2\delta\sqrt{K-1})\leq \sum_{k=1}^K  ([S u]_k +[T' w]_k)^2\leq (1+3\delta) (K-1). 
\]
In this canonical set, we have a good upper bound, 
\[
F_{\Theta^T u}= f_{u,u}\leq \frac{1+\frac{1}{2}\delta^2}{1-3\delta}\leq 1+5\delta,\quad 
G_{\Theta^T u}\leq \frac{(K-1)(1+3\delta)}{K-1}=1+3\delta. 
\]
On the other hand, because $\|T'\|\leq \|T\|$, so by spectral norm estimate of the Gaussian random matrix $T$, Corollary 5.35 \cite{Ver11},
\[
\Prob(\mathcal{D})\leq p_D:=\exp(-(\tfrac{\delta(1-\epsilon)}{2\epsilon}\sqrt{K-1}-\sqrt{K}-\sqrt{p})^2);
\]
moreover, because $\E |Q_K T w_j|^2=K-1$,  by Hansen-Wright's inequality for Gaussian variables \cite{RV13}, for some constants $D_\delta , c_\delta >0$,
\[
\Prob(\mathcal{A}_{i,j})\leq D_\delta \exp(-c_\delta  K). 
\]
A different pair of $c_\delta , D_\delta $ will make
\[
\Prob(\mathcal{D})+\Prob(\mathcal{A}_{i,j})\leq D_\delta  \exp(-c_\delta  K). 
\]
While in the non canonical set $\mathcal{D}\cup \mathcal{A}_{i,j}$, we can use a trivial upper bound for $F_{\Theta^T u}$
\[
F_{\Theta^T u}\leq \frac{|S' u|^2+(K-1)}{\rho |\Theta^T u|^2}\leq U_i:=\rho^{-1}\|\Sigma\|(1+\tfrac{1}{2}\delta)(K-1). 
\]
The part for $G_{\Theta^T u}$ can be achieved through a Cauchy inequality in the end. 
\par
\noindent\textbf{Step 4: $|S' u_i|\geq \tfrac{1}{2}\delta\sqrt{K-1}$}. In this case, the nominator of $f_{u,w}$ is bounded 
\begin{equation}
\label{tmp:F1}
(1+\epsilon)^2|S' u_i|^2+(K-1)\leq (1+\tfrac{1}{8}\delta)(|S' u_i|^2+(K-1)),
\end{equation}
and the denominator of $g_{u,w}$ is bounded from below by 
\begin{equation}
\label{tmp:G1}
(1-\epsilon)^2|S'u_i|+(K-1)\geq (1-\tfrac{1}{8}\delta)(|S'u_i|^2+(K-1)). 
\end{equation}
 Next we try to bound the denominator of $f_{u,w}$ and the nominator of $g_{u,w}$. Denote
\[
F_{i,j}=\sqrt{\sum_k ([S u_i]_k+[T' w_j]_k)^2+(K-1)\rho |\Theta^Tu_i|^2},\quad G_{i,j}=|Su_i+T'w_j|. 
\]
 Then the fact that $|S' (u-u_i)|\leq \epsilon |S' u_i|$, and that $|w-w_j|\leq \epsilon$,  the denominator of $f_{u,w}$ is bounded from below by:
\begin{align}
\notag
&\sum_k  ([S u]_k +[T' w]_k)^2+\rho (K-1) |\Theta^Tu|^2\\
\notag
&\geq \sum_k  ([S u_i]_k +[T' w_j]_k)^2-2([S (u_i-u)]_k+[T' (w_j-w)]_k)([S u_i]_k +[T' w_j]_k)\\
\notag
&\quad+\rho (K-1) |\Theta^Tu_i|^2-2\rho (K-1)|\Theta^T (u-u_i)||\Theta^T u_i|\\
\notag
&\geq F_{i,j}^2-2\sqrt{\sum_k ([S u_i]_k +[T' w_j]_k)^2+\rho (K-1)|\Theta^T u_i|^2}\times\\
\notag
&\qquad\sqrt{ 2\sum_k ([S (u-u_i)]^2_k + [T'(w- w_j)]^2_k)+2\rho (K-1) |\Theta^T(u-u_i)|^2}\\
\label{tmp:F2}
&\geq F^2_{i,j}-2F_{i,j}\sqrt{2|S' (u-u_i)|^2+ 2|T' (w-w_j)|^2} \geq F^2_{i,j}-2\epsilon F_{i,j} \sqrt{2|S' u_i|^2 +2\|T' \|^2 }.
\end{align}
Likewise,  the nominator of $g_{u,w}$ is bounded by:
\begin{align}
\notag
|Su+T'w|^2&\leq (|Su_i+T'w_j|+|S(u-u_i)|+|T'(w-w_j)|)^2 \\
\label{tmp:G2}
&\leq (G_{i,j}+\epsilon (|S'u_i|+|T'|))^2. 
\end{align}
In order to continue, we rewrite $F_{i,j}, G_{i,j}$ by rotating proper terms. There is a $K\times K$ rotation matrix $\Phi_i$, with its first row being 
\[
([S u_i]_1/|S u_i|,[S u_i]_2/|S u_i|,\ldots, [S u_i]_K/|S u_i|).
\]
 Then if we consider the following $K\times 1$ random vector 
\[
\zeta=(\zeta_1,\ldots,\zeta_K)^T= \Phi_i T' w_j =\Phi_i Q_K Tw_j
\]
we have 
\[
|S u_i|\zeta_1=-\sum_{k=1}^K [S u_i]_k [T'w_j]_k, \quad \sum^K_{k=1} |\zeta_k|^2=\sum_{k=1}^K  [T'w_j]^2_k. 
\]
Moreover, because $Tw_j\sim \mathcal{N}(0, I_K)$, $\zeta\sim \mathcal{N}(0,\Phi_i Q_K^2 \Phi_i^T)$.  In particular,
\begin{align}
\notag
F_{i,j}^2&=\sum_{k=1}^K ([S u_i]_k+[T' w_j]_k)^2+\rho|\Theta^T u_i|^2\\
\notag
&= (|S u_i|-\zeta_1)^2+\rho|\Theta^T u_i|^2+\sum_{k=2}^{K} |\zeta_k|^2\\
\label{tmp:F3}
&\geq (|S' u_i|-|\zeta_1|)^2+\sum_{k=2}^{K} |\zeta_k|^2. 
\end{align}
And 
\[
G_{i,j}^2=|Su_i+T'w_j|^2=(|S u_i|-\zeta_1)^2+ \sum_{k=2}^{K} |\zeta_k|^2\leq (|S' u_i|+|\zeta_1|)^2+ \sum_{k=2}^{K} |\zeta_k|^2. 
\]
We consider these sets of large deviations, 
\[
\mathcal{A}_{i,j}=\left\{|\zeta_1|\geq \frac{\delta}{8}|S' u_i| \right\},\quad \mathcal{B}_{i,j}=\left\{\left|\sum_{k=2}^{K} |\zeta_k|^2-(K-1)\right|\geq \frac{1}{2}\delta(K-1)+\frac{\delta}{4}|S' u_i|^2 \right\},
\]
and $\mathcal{D}_i=\{\|T'\|\geq (4\epsilon)^{-1}\delta\sqrt{ (K-1)+|S' u_i|^2} \}$. Then in the canonical set $(\mathcal{D}_i\cup \mathcal{A}_{i,j}\cup \mathcal{B}_{i,j})^c$,
\[
F^2_{i,j}\geq \eqref{tmp:F3}\geq (1-\frac{1}{2}\delta)|S' u_i|^2+(1-\frac{1}{2}\delta)(K-1),
\]
and for small enough $\delta$, 
\[
 \eqref{tmp:F2}\geq  F_{i,j}(F_{i,j}-2\epsilon\sqrt{2|S' u_i|^2+2\|T'\|^2})\geq (1-\tfrac{3}{4}\delta)(|S' u_i|^2+K-1).
\]
Combine this with \eqref{tmp:F1},  $f_{u,u}\leq \frac{1+\frac{1}{8}\delta}{1-\frac{3}{4}\delta}\leq 1+2\delta$. 

Likewise, in the canonical set $(\mathcal{D}_i\cup \mathcal{A}_{i,j}\cup \mathcal{B}_{i,j})^c$, $G^2_{i,j}\leq (1+\tfrac{\delta}{2})^2 (|S'u_i|^2+(K-1)), $ and by \eqref{tmp:G1} and \eqref{tmp:G2}, 
\[
g_{u,u}\leq \frac{(1+\tfrac{\delta}{2})^2 (1+\delta)^2(|S' u_i|^2+(K-1))}{(1-\tfrac{1}{8}\delta)(|S'u_i|^2+(K-1))}\leq 1+5\delta. 
\]

Next we bound the probability of large deviations. By Gaussian tail estimate, there are constant $D_\delta,c_\delta>0$ such that  
\[
\Prob(\mathcal{A}_{i,j})\leq C\exp\left(-c|S' u_i|^2\right).
\] 
And because $\|T'\|\leq \|T\|$ and $|S'_n u_i|\geq \frac{1}{2}\sqrt{\delta(K-1)}$, by concentration of  spectral norm of random matrices, Corollary 5.35 \cite{Ver11}, 
there is a pair of constants $(D_\delta , c_\delta )$ such that 
\[
\Prob(\mathcal{D}_i)\leq D_\delta \exp(-c_\delta  (\sqrt{K}+\sqrt{p}-(2\epsilon)^{-1}\sqrt{\delta (K-1)+\delta|S' u_i|^2})^2).
\]
Lastly, we can compute the mean of $\sum_{k=2}^K \zeta_k^2$
\[
\E \sum_{k=2}^K \zeta_k^2=\E|\zeta|^2-\E \zeta_1^2=\text{tr}(\Psi_i Q_K^2\Psi_i^T)-e^T_1\Psi_i Q_K^2 \Psi_i^T e_1,
\]
where $e_1=[1,0,\cdots,0]^T$. Since $\text{tr}(\Psi_i Q_K^2\Psi_i^T)=\text{tr}(Q_K^2)=(K-1)^2/K$, and 
\[
e^T_1\Psi_i Q_K^2 \Psi_i^T e_1\leq \|Q_K^2\|\leq 1. 
\]
So $\E \sum_{k=2}^K \zeta_k^2\geq K-3$. Moreover, because $\zeta\sim \mathcal{N}(0,\Phi_i Q_K^2 \Phi_i^T)$, $\sum_{k=2}^K \zeta_k^2$ has the same distribution as $|U\xi|^2$, where $\xi\sim \mathcal{N}(0,I_K)$, and
\[
U= \Psi_i Q_K- E_{1,1}\Psi_i Q_K,\quad E_{1,1}=e_1\otimes e_1.
\]  
Then 
\[
\|U^TU\|=\|Q_K\Psi^T_i(I-E_{1,1})\Psi_i Q_K\|\leq 1=\|I_K\|.
\]
Also the Hilbert-Schmidt norm is bounded by 
\begin{align*}
\|U^TU\|_{HS}^2&=\text{tr}(Q_K\Psi^T_i(I-E_{1,1})\Psi_i Q^2_K\Psi_i^T(I-E_{1,1})\Psi_i Q_K)\\
&=\text{tr}((I-E_{1,1})\Psi_i Q^2_K\Psi_i^T(I-E_{1,1})\Psi_i Q^2_K\Psi^T_i)\\
&\leq \text{tr}(\Psi_i Q^2_K\Psi_i^T(I-E_{1,1})\Psi_i Q^2_K\Psi^T_i)\\
&= \text{tr}(\Psi_i^TQ_K^4 \Psi_i(I-E_{1,1}))\leq tr(Q_K^4)\leq K=\|I_K\|^2_{HS},
\end{align*}
where we used that $\text{tr}(AB)=\text{tr}(BA)$ and $\text{tr}((I-E_{1,1}) A)\leq \text{tr}(A)$ for all PSD $A$. So by the Hansen-Wright's inequality, \cite{RV13}, there are constants $D_\delta $ and $c_\delta $
\[
\Prob(\mathcal{B}_{i,j})\leq D_\delta \exp(-c_\delta  ((K-1)+|S' u_i|^2)).
\]
By the union bound, we can choose a different pair of $D_\delta $ and $c_\delta $
\[
\Prob(\mathcal{D}_i\cup \mathcal{A}_{i,j}\cup \mathcal{B}_{i,j})\leq D_\delta \exp(-c_\delta  |S' u_i|^2-c_\delta K). 
\]
Moreover, when in the rare event $\mathcal{D}_i\cup \mathcal{A}_{i,j}\cup \mathcal{B}_{i,j}$, we can again use the trivial upper bound
\[
F_{\Theta^T u}\leq \frac{(1+\epsilon)^2|S' u_i|^2+(K-1)}{\rho |\Theta^T u|^2}\leq U_i:=\rho^{-1}(1+\tfrac{1}{2}\delta)\|\Sigma\|((K-1)+|S'u_i|^2). 
\]

\noindent\textbf{Step 5. Summing up.} Finally, we can put all our estimates together. First, denote the union of all large deviation set as 
\[
\mathcal{U}=\mathcal{D}\cup \bigcup_{i,j} (\mathcal{D}_i\cup \mathcal{A}_{i,j} \cup  \mathcal{B}_{i,j}).
\]
Based on previous discussion, outside $\mathcal{U}$, $F_{\Theta^T u},G_{\Theta^T u}\leq 1+5\delta$ for all $u$, so $\lambda, \mu\leq 1+5\delta$. While the probability of this event $\mathcal{U}$ is bounded by the union bound as
\[
\Prob(\mathcal{U})\leq p_{D}+|I||J|D_\delta \exp(-c_\delta  K)\leq  (\log \mathcal{C}'+1) \exp(\log p+D_\delta  p-c_\delta  K).
\]
Finally notice that the condition number $C'$ of matrix $S'$ is dominated by the one of $C$ by Lemma \ref{lem:cond} since $\Theta$ has rank $p$:
\[
\mathcal{C}'^{2}=\text{Cond}(S^T S+\rho(K-1) \Theta \Theta^T)=\text{Cond}(\Theta (C+\rho I)\Theta^T)\leq \mathcal{C}^2. 
\]

For the control of $\mu$ in the rare event, recall the trivial upper bounds. When $|S'u_i|\leq \frac{1}{2}\sqrt{\delta(K-1)}$, 
\begin{equation}
\label{tmp:1case}
\Prob(\mathcal{D}\cup \mathcal{A}_{i,j})U^n_i\leq D_\delta \rho^{-n}\|\Sigma\|^n(1+\tfrac{1}{2}\delta)^n(K-1)^n\exp(-c_\delta  K).
\end{equation}
For $|S'u_i|\geq \frac{1}{2}\sqrt{\delta(K-1)}$, 
\begin{equation}
\label{tmp:2case}
\Prob(\mathcal{D}_i\cup \mathcal{A}_{i,j}\cup\mathcal{B}_{i,j})U^n_i\leq D_\delta \rho^{-n}\|\Sigma\|^n(1+\tfrac{1}{2}\delta)^n((K-1)+|S'u_i|^2)^n\exp(-c_\delta  K-c_\delta |S'u_i|^2).
\end{equation}
By maximizing the right hand side  of \eqref{tmp:2case} over all possible value of  $|S'u_i|$, we can find a new pair of $c_\delta , D_\delta $ such that 
\[
\eqref{tmp:1case}\leq \rho^{-n}\|\Sigma\|^nD_\delta \exp(-c_\delta  K),\quad \eqref{tmp:2case}\leq \rho^{-n}\|\Sigma\|^n D_\delta \exp(-c_\delta K).
\]
As a consequence,
\begin{align*}
\E \unit_{\mathcal{U}} \mu^n&\leq \sum_{(i,j):|S'u_i|\leq \frac{1}{2}\sqrt{\delta(K-1)}}\Prob(\mathcal{D}\cup \mathcal{A}_{i,j})U^n_i+
\sum_{(i,j):|S'u_i|> \frac{1}{2}\sqrt{\delta(K-1)}}\Prob(\mathcal{D}_i\cup \mathcal{A}_{i,j}\cup\mathcal{B}_{i,j})U^n_i\\
&\leq |I||J|\rho^{-n}\|\Sigma\|^nD_\delta  \exp(-c_\delta  K)\\
&\leq  (\log \mathcal{C} +1) \rho^{-n}\|\Sigma\|^n\exp(\log p+\log D_\delta+D_\delta p-c_\delta K),
\end{align*}
for a different pair of $c_\delta , D_\delta >0$. Moreover, we can remove the $\log p+\log D_\delta$ term by having a larger $D_\delta$ in front of $p$.

The tail of $\lambda$ can be achieved by the following trivial bound
For the $\lambda^m$ part, notice that  by \eqref{eqn:quotient}
\[
g_{u,u}\leq \frac{2|S'u|^2+2|T'u|^2}{|S'u|^2+(K-1)}\leq 2\max\left\{\frac{\|T'\|^2}{K-1}, 1\right\}. 
\]
So by Corollary 3.53 of \cite{Ver11},  $\Prob(\|T'\|>\sqrt{K-1}+\sqrt{p}+t)\leq \exp(-t^2/2)$, because $p<K-1$, with a proper $c_\delta $ 
\[
\Prob(\lambda\geq 8+t)\leq \exp(-c_\delta K t). 
\]
\end{proof}

\begin{lem}
\label{lem:epsilonnet}
Let $S$ be any $p\times p$ nonsingular matrix, and $\mathcal{C}(S)$ be its conditional number. Then for any fixed $\epsilon\in (0,1)$, we say $U\subset\mathcal{S}^{p-1}$ is a $S$-relative $\epsilon$-net  of $\mathcal{S}^{p-1}$, if for any $x\in \mathcal{S}^{p-1}$, there is a $u\in U$ such that $|S(x-u)|\leq \epsilon |S u|.$ Then there exists a $S$-relative $\epsilon$-net  of $\mathcal{S}^{p-1}$,
\[
|U|\leq e(1+2e^{\frac{1}{p}}\epsilon^{-1})^pp(\log \mathcal{C}(S)+1),
\]
which can be further bounded by $4(1+4\epsilon^{-1})^p(p\log \mathcal{C}(S)+1)$ for simplicity.  Moreover, the linear dependence of $|U|$ over $\log \mathcal{C}(S)$ is sharp. 
\end{lem}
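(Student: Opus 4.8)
The plan is to reduce the relative covering problem to an ordinary Euclidean covering after slicing $\mathcal{S}^{p-1}$ into geometric ``radial'' shells on which the weight $|Su|$ is essentially constant. The starting observation is that the defining condition $|S(x-u)|\le\epsilon|Su|$ forces, via the triangle inequality, $(1-\epsilon)|Su|\le|Sx|\le(1+\epsilon)|Su|$; hence a single net point $u$ can only serve those $x$ whose weight $|Sx|$ lies in a fixed multiplicative window around $|Su|$. This both dictates the construction (partition the sphere by the value of $|Sx|$) and furnishes the lower bound needed for sharpness. Writing $\sigma_{\min}=\sigma_{\min}(S)$, $\sigma_{\max}=\sigma_{\max}(S)$, so that $|Sx|\in[\sigma_{\min},\sigma_{\max}]$ and $\mathcal{C}(S)=\sigma_{\max}/\sigma_{\min}$, I would work throughout with the pseudometric $d_S(x,u)=|S(x-u)|$.

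For the construction, fix a ratio $\theta>1$ and set, for $j=0,1,\dots,J-1$ with $J=\lceil\log_\theta\mathcal{C}(S)\rceil$,
\[
\Omega_j=\{x\in\mathcal{S}^{p-1}:\theta^{j}\sigma_{\min}\le|Sx|<\theta^{j+1}\sigma_{\min}\},
\]
absorbing the endpoint $\sigma_{\max}$ into the top shell so that the $\Omega_j$ cover $\mathcal{S}^{p-1}$. On each $\Omega_j$ I would take $U_j$ to be a maximal $\delta_j$-separated subset in the pseudometric $d_S$, with $\delta_j=\epsilon\,\theta^{j}\sigma_{\min}$, and set $U=\bigcup_j U_j$. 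Maximality makes $U_j$ a $\delta_j$-net of $\Omega_j$: for $x\in\Omega_j$ there is $u\in U_j\subset\Omega_j$ with $|S(x-u)|\le\delta_j=\epsilon\theta^{j}\sigma_{\min}\le\epsilon|Su|$, the last step because $u\in\Omega_j$ gives $|Su|\ge\theta^{j}\sigma_{\min}$. Crucially the centers automatically lie on the sphere, so no separate projection step is needed. To bound $|U_j|$ I would pass to the image: $\{Su:u\in U_j\}$ is a $\delta_j$-separated set inside $\overline{B}(0,\theta^{j+1}\sigma_{\min})$, so the standard volume-packing estimate gives $|U_j|\le(1+2\theta^{j+1}\sigma_{\min}/\delta_j)^p=(1+2\theta/\epsilon)^p$, uniformly in $j$.

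It then remains to count shells and optimize $\theta$. Since $J\le(\log\mathcal{C}(S))/\log\theta+1$, choosing $\theta=e^{1/p}$ gives $J\le p\log\mathcal{C}(S)+1$ and per-shell factor $(1+2e^{1/p}/\epsilon)^p$, whence $|U|\le(p\log\mathcal{C}(S)+1)(1+2e^{1/p}/\epsilon)^p$, which is dominated by the stated $e(1+2e^{1/p}\epsilon^{-1})^p p(\log\mathcal{C}(S)+1)$ after absorbing the additive $+1$. The cruder bound follows from the same computation with $\theta=2$, giving $(1+4/\epsilon)^p$ per shell and $J\le\log_2\mathcal{C}(S)+1$, and then a comparison of constants. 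The only genuinely delicate point in this half is that the relative error must be controlled by the \emph{actual} $|Su|$ rather than a shell-averaged value; the maximal-packing-within-each-shell device is exactly what reconciles ``net with centers on the sphere'' with ``relative accuracy,'' and the trade-off between the shell count $1/\log\theta$ and the per-shell base $1+2\theta/\epsilon$ is what pins down $\theta=e^{1/p}$ and the sharp exponent.

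Finally, for sharpness I would exhibit an $S$ for which any relative $\epsilon$-net must contain at least a constant (depending on $\epsilon$) times $\log\mathcal{C}(S)$ points. Taking $S$ diagonal with singular values filling $[\,1,\mathcal{C}(S)\,]$, continuity of $x\mapsto|Sx|$ on $\mathcal{S}^{p-1}$ lets me pick unit vectors $x_0,\dots,x_N$ with $|Sx_k|=(\tfrac{1+\epsilon}{1-\epsilon})^{2k}$ for $k\le N\approx\tfrac12\log\mathcal{C}(S)/\log\tfrac{1+\epsilon}{1-\epsilon}$. By the window inequality above, any $u$ covering $x_k$ has $|Su|$ within a factor $\tfrac{1+\epsilon}{1-\epsilon}$ of $|Sx_k|$, and since consecutive targets are spaced by a factor $(\tfrac{1+\epsilon}{1-\epsilon})^2$, no single $u$ can cover two of them; hence every relative $\epsilon$-net has at least $N+1\gtrsim_\epsilon\log\mathcal{C}(S)$ elements. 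I expect the main obstacle of the whole argument to be precisely this bookkeeping of the multiplicative error across shells (and matching the exact constants in the statement); the existence and the cardinality estimate themselves become routine once the shell decomposition and the maximal-packing-is-a-net observation are in place.
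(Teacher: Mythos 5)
Your proposal is correct and follows essentially the same route as the paper's proof: decompose $\mathcal{S}^{p-1}$ into multiplicative shells according to $|Sx|$ with ratio $\theta=e^{1/p}$, bound each shell's net by a maximal-separated-set-plus-volume-packing argument, and prove sharpness by the multiplicative window $(1-\epsilon)|Su|\leq|Sx|\leq(1+\epsilon)|Su|$ forcing at least $\log\mathcal{C}(S)/\log\tfrac{1+\epsilon}{1-\epsilon}$ points. The only (minor, and valid) deviation is that you use an absolute separation scale $\delta_j$ anchored at the shell's lower end, so that maximality directly yields the relative net property, whereas the paper uses relative $\epsilon$-separation and converts it to a relative $c\epsilon$-net before rescaling, which costs the extra factor $c^p=e$ that your bound avoids.
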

\begin{proof}
Denote $m$ to be the minimum singular value of $S$, and let $c>1$ be a number to be determined. For $n=1,\ldots, N_c:=\lceil \log (\mathcal{C}(S))/\log c\rceil$, let 
\[
D_n:=\{u: |u|=1, m c^{n-1} \leq |Su|\leq m c^{n} \}.
\]
Then $\mathcal{S}^{p-1}=\cup_{n=1}^{N_c} D_n$, and we will construct a $S$-relative $\epsilon$-net for each $D_n$. 
\par
We say $U\subset D_n$ is a $S$-relative $\epsilon$-separated set of $D_n$, if for two different $x,y\in U$, $|S(x-y)|\geq \epsilon\max\{|Sx|, |Sy|\}$. Then, the cardinality of any $S$-relative $\epsilon$-net of $D_n$ is upper bounded. To see this, note that if $x,y\in U$,  $B_{\frac{1}{2}\epsilon|S x|}(Sx)$ and $B_{\frac{1}{2}\epsilon|S y|}(Sy)$ have  no intersection: else  $|Sx-Sy|< \frac{\epsilon}{2} (|Sx|+|Sy|)$, which contradicts the definitions of $\epsilon$-separation. On the other hand, because $|Sx|\leq m c^n$, $B_{\frac{1}{2}\epsilon|S x|}(Sx)\subseteq B_{(1+\frac{1}{2}\epsilon)mc^n } (0)$, so 
\[
|U|\text{Vol}(B_{\frac{1}{2}\epsilon m c^{n-1}}(Sx))\leq \text{Vol}(B_{(1+\frac{1}{2}\epsilon)mc^n } (0))\quad \Rightarrow \quad |U|\leq (1+2\epsilon^{-1})^p c^p. 
\]
\par
Since $S$-relative $\epsilon$-separated sets of $D_n$ all have finite cardinality, there is a $U'_n$ among them with the maximal cardinality. We claim this $U'_n$ is a $S$-relative $c\epsilon$-net for $D_n$. To see this, suppose there is a $z$, with $|S(z-x)|\geq c\epsilon |Sx|$ for all $x\in U'_n$, then $|S(z-x)|\geq \epsilon|Sz|$ because both $z$ and $x$ are in $D_n$, and their S-norms are at most $c$ multiple apart. So $U'_n\cup\{z\}$ is another $S$-relative $\epsilon$-separated sets of $D_n$, which contradicts that $U'_n$ has the maximal cardinality. 
\par
To summarize, we can use $\cup_{n=1}^{N_c} U'_n$ as a $c\epsilon$ set for $\mathcal{S}^{p-1}$, with the total cardinality bounded by $(1+2\epsilon^{-1})^p c^pN_c$. To show our claim, we will replace $\epsilon$ with $c^{-1}\epsilon$ and $c=e^{\frac{1}{p}}$. 

It is also easy to see the linear dependence of $|U|$ on $\mathcal{C}(S)$. Let $F(x)=\log |Sx|$, then $F(\mathcal{S}^{p-1})=[\log m, \log m+\log\mathcal{C}(S)]$, where $m$ is the minimum eigenvalue of $S$. For each $u\in U$, if $|S(x-u)|\leq \epsilon |Su| $, then $F(x)\in [\log(1-\epsilon)+\log F(u), \log(1+\epsilon)+\log F(u)]$, which is an interval of fixed length $\log \frac{1+\epsilon}{1-\epsilon}$. Therefore 
\[
|U|\geq \frac{\log\mathcal{C}(S)}{\log \frac{1+\epsilon}{1-\epsilon}}. 
\]
\end{proof} 

\section{Conclusion and discussion}
\label{sec:conclude}
Ensemble Kalman filters (EnKF) are indispensable data assimilation methods for high dimensional systems. Their surprisingly successful performance with ensemble size $K$ much lower than ambient space dimension $d$ has remained a mystery for mathematicians. The practitioners often attribute this success to the existence of a  low  effective dimension $p$, of which the formal definition is sorely lacking. This paper closes this gap by considering a Kalman filter with random coefficients, and uses its covariance $\Rtilde_n$ as an intrinsic filtering performance criteria. The effective dimension then can naturally be defined as the number of eigenvalues of $\Rtilde_n$, or a system instability matrix, that are above a significance threshold $\rho$. An EnKF with proper covariance inflation and localization techniques is constructed by Algorithms \ref{alg:EnKF}, which exploits the low effective dimension structure. Then assuming the system is uniformly observable, and the ensemble size exceeds a constant multiple of $p$, Theorem \ref{thm:weak} asserts that the Mahalanobis error of EnKF decays geometrically to a constant. Useful properties such as corvariance fidelity, exponential stability, and filter accuracy of EnKF  come  as immediate corollaries.  This framework can be directly applied to a simple stochastic turbulence, where the effective filtering dimension along with the EnKF tuning parameters are explicitly computable. The proof exploits the intrinsic Mahalanobis error dissipation,  shows this mechanism operates with high probability using a new noncentral random matrix concentration result, and regulates the behavior of outlier by designing a Lyapunov function. 
\par
As the first step of studying EnKF performance, this paper looks at a relatively simple setup. There are multiple directions this framework can be improved at. Here we list five of them to inspire for future research:
\begin{itemize}
\item The random linear coefficient setting \eqref{sys:random} includes a wide range of applications \cite{Bou93, MT16uq}. But many geophysical applications of EnKF, the forecast models are fully nonlinear \cite{TMK15non, TMK15}. Two major challenges arise if one wants to generalize our setup to nonlinear systems. First it will be difficult for the forecast ensemble to capture the forecast dynamics, so the error dynamics will no longer be linear. Second, the reference Kalman filter plays a pivotal role in this framework, it is not clear what would replace it in a nonlinear setting.
\item One important feature that this paper has not discussed is the universality of noise. The authors believe the same proof remains valid if the noise distribution are assumed only to be sub-Gaussian \cite{Ver11}. This is because the only feature we have used about Gaussian distributions is their concentration, which also holds for sub-Gaussian distributions or even exponential distributions. The non-Gaussian noise may be an essential ingredient if one wants to investigate the nonlinear settings. This paper has not discussed this universality intentionally, as it is less known for the filtering community, and may cause confusion.
\item The Mahalanobis error appears to be a natural statistic for covariance fidelity and stability analysis. It is equivalent to the average $l_2$ error $\E |e_n|$, but this equivalence is not good in a high dimensional setting. For example, in Theorem \ref{thm:weak}, $\E|e_n|$ has a square root dependence on $d$. In practice one would expect the dependence should be $\sqrt{p}$. To reach this result, one would need to investigate the uncertainty levels in different dimensions. 
\item The uniform observability Assumption \ref{aspt:lowdim} is proposed for a simpler Lyapunov function construction. It is stronger than the general assumptions developed in the classic Kalman filter literature such as \cite{Bou93}. It will be interesting if our framework can be further generalized in this aspect.
\item In recent years, many other EnKF augmentations are invented based on various intuition. Whether our framework can justify their formulations require further investigation. 
\end{itemize}

\section*{Acknowledgement}
This research is supported by the MURI award grant N00014-16-1-2161, where A.J.M. is the principal investigator, while X.T.T. was supported as a postdoctoral fellow. This research is also supported by the NUS grant R-146-000-226-133, where X.T.T. is the principal investigator. The authors thank Ramon van Handel for his discussion on Lemma \ref{lem:epsilonnet},  Kim Chuan Toh for his discussion on Lemma \ref{lem:Kalconcave}, and the anonymous referee for the presentation suggestions. 
\appendix
\section*{Appendix}
\section{Matrix inequalities}
The following lemma has been mentioned in \cite{FB07} for dimension one. 
\begin{lem}
\label{lem:Kalconcave}
The prior-posterior Kalman covariance update mapping $\Kalman_{n}$ in \eqref{sys:optimal}, can also be defined as
\[
\Kalman_{n}(C)=(I-G H_n) C(I-G H_n)^T + G  G^T
\]
where $G:=C H_n (I +H_n C H_n^T)^{-1}$ is the corresponding Kalman gain. $\Kalman_{n}$ is a  concave monotone operator from PD to itself. 
\end{lem}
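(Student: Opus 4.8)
The plan is to prove the three assertions in turn: the equivalence of the two expressions for $\Kalman_n$, then monotonicity, and finally concavity, the last being the only point with real content. First, for the equivalence, I would set $M := I_q + H_n C H_n^T \succ 0$ and $G := CH_n^T M^{-1}$ (the Kalman gain $\Gain_n(C)$), and expand
\[
(I - GH_n)C(I-GH_n)^T + GG^T = C - GH_nC - CH_n^TG^T + G(I_q + H_nCH_n^T)G^T .
\]
The last term is $GMG^T = CH_n^TM^{-1}MM^{-1}H_nC = CH_n^TM^{-1}H_nC$, which equals both $GH_nC$ and $CH_n^TG^T$ (using $M=M^T$); it therefore cancels one of the two subtracted terms and leaves $C - CH_n^TM^{-1}H_nC$, exactly \eqref{eqn:Kalmanoper}. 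The same Woodbury manipulation yields the information form $\Kalman_n(C) = (C^{-1} + H_n^TH_n)^{-1}$ for $C\in PD$, which I will use for monotonicity.

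For monotonicity, suppose $C_1 \preceq C_2$ in $PD$. Matrix inversion is operator antitone on $PD$, so $C_1^{-1} \succeq C_2^{-1}$, hence $C_1^{-1} + H_n^TH_n \succeq C_2^{-1} + H_n^TH_n$; applying antitonicity once more gives $\Kalman_n(C_1) = (C_1^{-1}+H_n^TH_n)^{-1} \preceq (C_2^{-1}+H_n^TH_n)^{-1} = \Kalman_n(C_2)$. Positivity of the image is immediate since $C^{-1}+H_n^TH_n \succ 0$.

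For concavity I deliberately avoid the information form, because inverting twice would chain two operator-convexity inequalities in incompatible directions. Instead I would realize $\Kalman_n(C)$ as a Schur complement of a matrix \emph{affine} in $C$. Observe that
\[
\mathcal{M}(C) := \begin{pmatrix} 0 & 0 \\ 0 & I_q \end{pmatrix} + \begin{pmatrix} I_d \\ H_n \end{pmatrix} C \begin{pmatrix} I_d & H_n^T \end{pmatrix} = \begin{pmatrix} C & CH_n^T \\ H_nC & I_q + H_nCH_n^T \end{pmatrix}
\]
is affine in $C$, and its Schur complement with respect to the invertible $(2,2)$-block is precisely $C - CH_n^T M^{-1} H_n C = \Kalman_n(C)$. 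The key step is the variational identity obtained by completing the square: in the Loewner order,
\[
\Kalman_n(C) = \min_{X \in \reals^{q\times d}} \begin{pmatrix} I_d \\ X \end{pmatrix}^T \mathcal{M}(C) \begin{pmatrix} I_d \\ X \end{pmatrix},
\]
the minimizer being $X = -M^{-1}H_nC$. For each fixed $X$, the map $C \mapsto \begin{pmatrix} I_d & X^T\end{pmatrix} \mathcal{M}(C) \begin{pmatrix} I_d \\ X \end{pmatrix}$ is affine in $C$, so $\Kalman_n$, being a pointwise Loewner infimum of affine maps, is concave; this representation also re-proves monotonicity, as $C_1 \preceq C_2$ makes every such affine map increase.

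The main obstacle is exactly the concavity: the honest difficulty is recognizing that the naive double-inversion fails and that one must pass to the Schur-complement picture to exhibit $\Kalman_n$ as an infimum of affine functions of $C$. Once the variational identity and the information form are in hand, the algebraic equivalence and the monotonicity are routine.
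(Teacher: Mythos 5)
Your proof is correct, but it takes a genuinely different route from the paper. The paper's argument is differential: it computes the first Fr\'echet derivative of $\Kalman_n$ along a symmetric direction $A$, obtaining $D_A\Kalman=(I-H^TJHX)^TA(I-H^TJHX)$ with $J=(I+HXH^T)^{-1}$ (which gives monotonicity, since a congruence of a PSD direction is PSD), and then computes the second derivative and factors it as $D_A^2\Kalman=-2(\,\cdot\,)(\,\cdot\,)^T\preceq 0$, concluding concavity from the negative semidefinite Hessian on the convex cone of PD matrices. You instead argue algebraically: the Joseph-form expansion and the information form $(C^{-1}+H_n^TH_n)^{-1}$ handle the identity and monotonicity, and concavity comes from exhibiting $\Kalman_n(C)$ as the Schur complement of the affine-in-$C$ block matrix $\mathcal{M}(C)$, hence as an attained Loewner minimum $\min_X \bigl(\begin{smallmatrix} I_d \\ X\end{smallmatrix}\bigr)^T\mathcal{M}(C)\bigl(\begin{smallmatrix} I_d \\ X\end{smallmatrix}\bigr)$ of maps affine in $C$ --- and your completion-of-the-square verification (minimizer $X=-M^{-1}H_nC$, PSD residual) makes this a genuine minimum, so the convexity argument in the Loewner partial order is sound. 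Your remark that naive double inversion fails is also accurate: inversion is operator convex and antitone, and the inner map $C\mapsto C^{-1}+H_n^TH_n$ is convex, so the composition inequalities point in incompatible directions. The trade-off: the paper's calculus proof is short once one is willing to compute and factor a matrix Hessian, while your variational proof avoids differentiation entirely, re-derives monotonicity for free from the same representation, and is the more standard and robust argument for concavity of Schur complements (it extends verbatim to joint concavity statements); either suffices for the way the lemma is used (Jensen's inequality in Lemmas \ref{lem:corcoarse} and \ref{lem:corfine}).
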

\begin{proof} The first matrix identity is straightforward to verify, and can be found in many references of Kalman filters \cite{MH12}.
In order to simplify the notations, we ignore the time indices below and let  $J(X)=(I +HXH^T)^{-1}$. Then picking any symmetric matrix $A$, the perturbation in direction $A$ is given by 
\[
D_A J(X):=\frac{d}{dt}J(X+At)\big|_{t=0}=-JHAH^TJ.
\]
Therefore
\[
D_A \Kalman=A-AH^TJHX-XH^TJHA+XH^TJHAH^TJHX=(I-H^TJHX)^TA(I-H^TJHX)
\]
The Hessian is 
\begin{align*}
D_A^2 \Kalman&=-2AH^TJHA+2AH^TJHAH^TJHX+2XH^TJHAH^TJHA\\
&\phantom{==}-2XH^TJHAH^TJHAH^TJHX\\
&=-2(AH^TJ^{1/2}-XH^THAH^TJ^{1/2})\cdot(AH^TJ^{1/2}-XH^THAH^TJ^{1/2})^T\preceq 0.
\end{align*}
Therefore, as long as $X, X+A\succeq 0$, then the convexity holds:
\[
\Kalman (X)+\Kalman(X+A)\preceq 2\Kalman(X+\tfrac12 A). 
\]
When we require $A$ to be PSD, $D_A\Kalman\succeq 0$ implies the monotonicity of $\Kalman$. 
\end{proof}

\begin{lem}
\label{lem:matrix}
Suppose that  $A, C, D$ are PSD matrices, $C$ is invertible,  while $A\preceq [BCB^T+D]^{-1}$, then
\[
B^TAB\preceq C^{-1},\quad A^{1/2}D A^{1/2}\preceq I_d. 
\]
\end{lem}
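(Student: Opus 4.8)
The plan is to reduce both inequalities to a single elementary fact about Gram matrices: for any real matrix $E$ and any positive definite $M$ with $EE^T\preceq M$, one has $E^T M^{-1} E\preceq I$. I would establish this auxiliary fact first, since both claims follow by specialization. Setting $X=M^{-1/2}E$, the matrices $X^T X=E^T M^{-1}E$ and $XX^T=M^{-1/2}EE^T M^{-1/2}$ share the same nonzero eigenvalues; the second satisfies $XX^T\preceq M^{-1/2}MM^{-1/2}=I_d$ precisely because $EE^T\preceq M$, so its spectral radius is at most $1$, and hence so is that of $E^T M^{-1}E$. This spectral duality is the crux of the argument and the only place where care is needed: since $BCB^T$ and $D$ may each be singular, one cannot invert them separately and compare. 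The trick is to keep the full sum $M:=BCB^T+D$ (which the hypothesis $A\preceq M^{-1}$ forces to be invertible) and to exploit the $X^T X$ versus $XX^T$ identity rather than operator monotonicity of the inverse.

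For the first claim I would take $E=BC^{1/2}$, so that $EE^T=BCB^T\preceq BCB^T+D=M$. The auxiliary fact gives $C^{1/2}B^T M^{-1}BC^{1/2}=E^T M^{-1}E\preceq I$, and conjugating by the invertible $C^{-1/2}$ yields $B^T M^{-1}B\preceq C^{-1}$. Combining this with the hypothesis $A\preceq M^{-1}$, which gives $B^T A B\preceq B^T M^{-1}B$ upon sandwiching by $B$, the first inequality $B^T A B\preceq C^{-1}$ follows.

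For the second claim I would take $E=D^{1/2}$, so that $EE^T=D\preceq M$, and the auxiliary fact gives $D^{1/2}M^{-1}D^{1/2}\preceq I_d$. Since $A\preceq M^{-1}$ implies $D^{1/2}AD^{1/2}\preceq D^{1/2}M^{-1}D^{1/2}\preceq I_d$, it remains only to pass from $D^{1/2}AD^{1/2}$ to $A^{1/2}DA^{1/2}$. These are $X^T X$ and $XX^T$ for $X=A^{1/2}D^{1/2}$, hence have identical nonzero eigenvalues; as $D^{1/2}AD^{1/2}\preceq I_d$ bounds the common spectral radius by $1$, we conclude $A^{1/2}DA^{1/2}\preceq I_d$. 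The only subtlety to recheck throughout is this repeated use of the $X^T X$ versus $XX^T$ eigenvalue identity for possibly singular $X$, which holds for the nonzero part of the spectrum and suffices since every matrix in play is PSD.
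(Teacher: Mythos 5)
Your proof is correct, but it is organized differently from the paper's. The paper conjugates the hypothesis once to obtain the single master inequality $A^{1/2}[BCB^T+D]A^{1/2}\preceq I_d$ (itself an instance of the same $X^TX$ versus $XX^T$ spectral duality you use); the second claim then follows immediately by discarding the PSD term $A^{1/2}BCB^TA^{1/2}$, and the first claim is obtained by deriving the quadratic inequality $(B^TAB)C(B^TAB)\preceq B^TAB$ and invoking the auxiliary Lemma \ref{lem:trivial} to conclude $B^TAB\preceq C^{-1}$. You instead isolate a reusable Gram-matrix fact, $EE^T\preceq M\Rightarrow E^TM^{-1}E\preceq I$, and apply it twice: with $E=BC^{1/2}$ it yields $B^TM^{-1}B\preceq C^{-1}$, a clean intermediate bound that does not involve $A$ at all, after which the hypothesis $A\preceq M^{-1}$ enters only through the sandwich $B^TAB\preceq B^TM^{-1}B$; with $E=D^{1/2}$ it yields $D^{1/2}M^{-1}D^{1/2}\preceq I_d$, though you then need a second application of the duality to flip $D^{1/2}AD^{1/2}\preceq I_d$ into $A^{1/2}DA^{1/2}\preceq I_d$. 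What your route buys is modularity and self-containedness: no appeal to the quadratic lemma, and the intermediate statement $B^TM^{-1}B\preceq C^{-1}$ is of independent interest. What the paper's route buys is economy on the second claim, which it gets in one line rather than via two duality applications. Your repeated use of the nonzero-spectrum identity for possibly singular $X$ is sound, since in every instance the matrices involved are PSD, so the bound $\preceq I_d$ is equivalent to a spectral-radius bound.
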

\begin{proof}
From the condition, we have $A^{1/2}[BCB^T+D] A^{1/2}\preceq I_d$. Therefore our second claim holds. Moreover, 
\[
(B^TAB)C(B^TAB)\preceq B^TA^{1/2} A^{1/2}[BCB^T+D] A^{1/2} A^{1/2}B\preceq B^TA B. 
\]
This leads to our first claim by the next lemma.
\end{proof}

\begin{lem}
\label{lem:trivial}
Let $A$ and $B$ be PSD matrices, if
\begin{itemize}
\item $A\succeq I_d$, then $ABA\succeq B$.
\item $A\preceq I_d$, then $ABA\preceq B$. And for any real symmetric matrix $C$, $CAC\preceq C^2$. 
 \end{itemize}
\end{lem}
\begin{proof}
If the null subspace of $B$ is $D$ and $\bfP$ is the projection onto the complementary subspace $D^\bot$, then it suffices to show that  $(\bfP A\bfP) (\bfP B\bfP) (\bfP A\bfP) \succeq \bfP B\bfP$. Therefore, without loss of generality, we can assume $B$ is invertible, so it suffices to show 
\[
(B^{-1/2} A B^{1/2})(B^{-1/2} A B^{1/2})^T\succeq I.
\]
But this is equivalent to checking the singular values of $B^{-1/2}A B^{1/2}$ are greater than $1$, which are the same as the  eigenvalues of $A$.

If $A$ and $C$ are invertible, then the second claim follows as the direct inverse of the first claim. Else, it suffice to show the claim on the subspace where $A$ and $C$ are invertible. 
\end{proof}
\begin{lem}
\label{lem:norm}
Let $A$ and $B$ be two PSD matrices, and $A$ is invertible, then
\[
\|A B\|=\|A^{1/2} B A^{1/2}\|=\inf\{\lambda: B\preceq \lambda A^{-1}\}. 
\]
\end{lem}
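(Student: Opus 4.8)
The plan is to prove the two equalities separately, working from the right. The quantity $\inf\{\lambda : B \preceq \lambda A^{-1}\}$ is the easiest to identify, so I would start there. Since $A$ is PD, its square root $A^{1/2}$ is symmetric and invertible, and congruence by an invertible matrix preserves a Loewner inequality in both directions. Thus $B \preceq \lambda A^{-1}$ holds if and only if $A^{1/2} B A^{1/2} \preceq \lambda A^{1/2} A^{-1} A^{1/2} = \lambda I_d$. Hence the feasible set of $\lambda$ is exactly $\{\lambda : A^{1/2}BA^{1/2} \preceq \lambda I_d\} = [\lambda_{\max}(A^{1/2}BA^{1/2}),\infty)$, whose infimum is $\lambda_{\max}(A^{1/2}BA^{1/2})$. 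Because $A^{1/2}BA^{1/2}$ is symmetric PSD, its largest eigenvalue equals its $\ell_2$ operator norm, giving the second equality $\|A^{1/2}BA^{1/2}\| = \inf\{\lambda : B\preceq \lambda A^{-1}\}$.

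For the first equality I would exploit the similarity $AB = A^{1/2}\,(A^{1/2}BA^{1/2})\,A^{-1/2}$, so that $AB$ and the symmetric matrix $A^{1/2}BA^{1/2}$ share the same spectrum; in particular $AB$ has nonnegative real eigenvalues with largest value $\lambda_{\max}(A^{1/2}BA^{1/2})$. The cleaner viewpoint is that $AB$ is self-adjoint and PSD with respect to the weighted inner product $\langle x,y\rangle_{A^{-1}} = x^T A^{-1} y$: indeed $\langle ABx, y\rangle_{A^{-1}} = x^T B y = \langle x, ABy\rangle_{A^{-1}}$ using the symmetry of $A$ and $B$. Relative to that geometry the operator norm of $AB$ coincides with its spectral radius, namely $\lambda_{\max}(A^{1/2}BA^{1/2})$, which closes the chain.

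The step I expect to be the main obstacle is precisely this first equality when $\|\cdot\|$ is read as the standard $\ell_2$ operator norm. Since $AB$ need not be symmetric, its $\ell_2$ operator norm is its largest singular value, which in general strictly exceeds its spectral radius (a quick diagonal-versus-rank-one example already separates the two). Consequently $\|AB\| = \|A^{1/2}BA^{1/2}\|$ does not follow from the similarity alone and genuinely requires interpreting $\|AB\|$ as the spectral radius, equivalently as the operator norm in the $A^{-1}$-weighted inner product under which $AB$ is self-adjoint. I would therefore make that identification explicit as the crux of the argument, and would verify that the downstream applications of this lemma invoke only the eigenvalue content, i.e. the characterization $\lambda_{\max}(A^{1/2}BA^{1/2}) = \inf\{\lambda : B\preceq \lambda A^{-1}\}$, rather than the largest singular value of the raw product $AB$.
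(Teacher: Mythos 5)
Your proposal follows essentially the same route as the paper's proof, which reads in its entirety: the first equality ``comes as conjugacy preserves eigenvalues,'' and the second ``is obvious.'' Your congruence-by-$A^{1/2}$ argument is exactly what makes the second equality obvious, and your similarity identity $AB=A^{1/2}\bigl(A^{1/2}BA^{1/2}\bigr)A^{-1/2}$ is the paper's conjugacy step. The obstacle you flag, however, is not a defect of your write-up but a genuine imprecision in the paper: since the preliminaries define $\|\cdot\|$ as the $l_2$ operator norm (largest singular value), the first equality is literally false, and the diagonal-versus-rank-one example you anticipate does separate the two quantities. Take $A=\mathrm{diag}(1,\epsilon)$ and $B$ the all-ones $2\times 2$ matrix: then $\|AB\|=\sqrt{2+2\epsilon^2}$, while $\|A^{1/2}BA^{1/2}\|=\inf\{\lambda: B\preceq \lambda A^{-1}\}=1+\epsilon$, and $\sqrt{2+2\epsilon^2}>1+\epsilon$ for every $\epsilon\neq 1$. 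So the lemma holds only under the reading $\|AB\|:=\rho(AB)$, the spectral radius, equivalently the operator norm in the $A^{-1}$-weighted inner product in which $AB$ is self-adjoint, exactly as you propose; the paper's phrase ``conjugacy preserves eigenvalues'' silently makes this identification. Your closing sanity check is also the right one, and it succeeds: the paper invokes this lemma only to identify $\nu_n=\inf\{\nu\geq 1: C_n\preceq \nu\Rtilde_n\}$ with $\max\{1,\|C_n\Rtilde_n^{-1}\|\}$, as in \eqref{eqn:cnbound} and in the remark $F(C_0)=f(\|C_0\Rtilde_0^{-1}\|)$ after Theorem \ref{thm:strong}, and those uses need only the infimum characterization; even under the singular-value reading they survive up to the bounded factor $\|\Rtilde_n^{1/2}\|\,\|\Rtilde_n^{-1/2}\|\leq D_R$, via $C_n\Rtilde_n^{-1}=\Rtilde_n^{1/2}\bigl(\Rtilde_n^{-1/2}C_n\Rtilde_n^{-1/2}\bigr)\Rtilde_n^{-1/2}$.
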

\begin{proof}
$\|A B\|=\|A^{1/2} B A^{1/2}\|$ comes as conjugacy preserves eigenvalues, and $\|A^{1/2} B A^{1/2}\|=\inf\{\lambda: B\preceq \lambda A^{-1}\}$ is obvious. 
\end{proof}

\begin{lem}
\label{lem:cond}
Let $A$ be a $d\times d$ PSD matrix, and $\Theta$ is a $p\times d$ matrix with rank $p$. Then the condition number of $\Theta A\Theta^T$ is dominated by the one of $A$.
\end{lem}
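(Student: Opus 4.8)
The plan is to use the singular value decomposition of $\Theta$ to reduce the claim to two elementary operations on $A$: passing to a principal submatrix, and a diagonal congruence. Write $\Theta = W\,[\Sigma_\Theta\ \ 0]\,V^T$ with $W\in\reals^{p\times p}$, $V\in\reals^{d\times d}$ orthogonal and $\Sigma_\Theta=\mathrm{diag}(s_1,\dots,s_p)$ the (strictly positive, as $\mathrm{rank}\,\Theta=p$) singular values. Putting $B:=V^TAV$, which is orthogonally similar to $A$ and hence satisfies $\mathrm{Cond}(B)=\mathrm{Cond}(A)$, a short computation gives $\Theta A\Theta^T = W\,\Sigma_\Theta B_{11}\Sigma_\Theta\,W^T$, where $B_{11}=PBP^T$ is the leading $p\times p$ block of $B$ and $P=[I_p\ \ 0]$. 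Since conjugation by the orthogonal $W$ preserves eigenvalues, $\mathrm{Cond}(\Theta A\Theta^T)=\mathrm{Cond}(\Sigma_\Theta B_{11}\Sigma_\Theta)$, and it remains to track the condition number through the block restriction and the scaling.

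The block restriction is the easy half and is where I would begin. Because $|P^Tu|=|u|$, the Rayleigh quotient bounds $\lambda_{\max}(B_{11})=\max_{|u|=1}(P^Tu)^TB(P^Tu)\le\lambda_{\max}(B)$ and, symmetrically, $\lambda_{\min}(B_{11})\ge\lambda_{\min}(B)$ are immediate (this is Cauchy interlacing). Hence $\mathrm{Cond}(B_{11})\le\mathrm{Cond}(B)=\mathrm{Cond}(A)$, i.e. restricting to the row space of $\Theta$ never increases the condition number.

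The diagonal congruence $B_{11}\mapsto\Sigma_\Theta B_{11}\Sigma_\Theta$ is the delicate step, and I expect it to be the crux. When $\Sigma_\Theta$ is a scalar multiple of $I_p$ --- equivalently $\Theta\Theta^T\propto I_p$, so that $\Theta$ acts isometrically (up to scale) on its row space --- the congruence is a harmless scalar and the stated bound $\mathrm{Cond}(\Theta A\Theta^T)\le\mathrm{Cond}(A)$ follows directly from the previous paragraph. For a general rank-$p$ $\Theta$ one has only the sandwich $\lambda_{\min}(A)\,\Theta\Theta^T\preceq\Theta A\Theta^T\preceq\lambda_{\max}(A)\,\Theta\Theta^T$, which by Lemma~\ref{lem:norm} yields the weaker product bound $\mathrm{Cond}(\Theta A\Theta^T)\le\mathrm{Cond}(A)\,\mathrm{Cond}(\Theta\Theta^T)$; anisotropic scalings (unequal $s_i$) can genuinely enlarge the condition number, so the obstacle is precisely to control this extra $\mathrm{Cond}(\Theta\Theta^T)$ factor. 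In the use made of this lemma the matrix $\Theta\Theta^T$ enters only through the logarithmic $\epsilon$-net cardinality of Lemma~\ref{lem:epsilonnet}, so carrying the product bound costs only an additive $\log\mathrm{Cond}(\Theta\Theta^T)$ there; the clean inequality as stated should therefore be read under the normalization that $\Theta$ has orthonormal rows, with the general case absorbed into that logarithmic factor.
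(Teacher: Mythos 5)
Your proposal is correct, and it uncovers a genuine defect in the lemma as stated: with no normalization on $\Theta$ the claim is false. Take $p=d=2$, $A=I_2$, $\Theta=\mathrm{diag}(1,2)$; then $\Theta A\Theta^T=\mathrm{diag}(1,4)$ has condition number $4>1=\mathrm{Cond}(A)$, which is exactly the anisotropic-scaling obstruction you point out. The paper's own proof is the Rayleigh-quotient argument: for unit eigenvectors $v_M,v_m$ of $\Theta A\Theta^T$ it computes $(\Theta^Tv_M)^TA(\Theta^Tv_M)=\lambda_M$ and $(\Theta^Tv_m)^TA(\Theta^Tv_m)=\lambda_m$, and concludes $\lambda_{\max}(A)\geq\lambda_M$ and $\lambda_{\min}(A)\leq\lambda_m$; those conclusions require $|\Theta^Tv_M|\leq 1$ and $|\Theta^Tv_m|\geq 1$ respectively, i.e.\ precisely the unit-singular-value (orthonormal rows) normalization you isolate. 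So the paper's one-line proof carries the same hidden assumption, and under that normalization your SVD-plus-Cauchy-interlacing argument and the paper's Rayleigh quotients are the same mechanism; yours simply makes the required hypothesis visible.

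Your handling of the general case is also the right patch. The product bound $\mathrm{Cond}(\Theta A\Theta^T)\leq \mathrm{Cond}(A)\,\mathrm{Cond}(\Theta\Theta^T)$ is what actually holds, and in the sole application of the lemma (Step 5 of the proof of Theorem \ref{thm:RMT}, where $\Theta=\Lambda^{-1/2}\Psi^T$, hence $\Theta\Theta^T=\Lambda^{-1}$, which is genuinely not proportional to $I_p$) the condition number enters only through $\log\mathcal{C}'$ in the net cardinality of Lemma \ref{lem:epsilonnet}. Carrying the product bound through that step replaces $\log\mathcal{C}+1$ by $\log\mathcal{C}+\log\mathrm{Cond}(\Lambda)+1$, an additive correction that leaves the structure of the probability estimates intact but makes the constants depend on the conditioning of $\Sigma$ restricted to its range. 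This is a correction the paper needs, not merely an alternative route to the stated lemma.
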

\begin{proof}
Suppose $v_M$ and $v_m$ are the p-dimensional eigenvectors of $\Theta A\Theta^T$ with the maximum  and minimum eigenvalues, $\lambda_M$ and $\lambda_m$. Then by
looking at $(\Theta^T v_M)^T A (\Theta^Tv_M)$ and $(\Theta^T v_m)^T A (\Theta^Tv_m)$ we know the maximum eigenvalue of $A$ is above $\lambda_M$, and the minimum eigenvalue of $A$ is below $\lambda_m$. 
\end{proof}

\bibliographystyle{unsrt}
\bibliography{EnKF}
\end{document}